\numberwithin{equation}{section}
\newtheorem{theorem}{Theorem}[section]
\newtheorem{claim}[theorem]{Claim}
\newtheorem{proposition}[theorem]{Proposition}
\newtheorem{lemma}[theorem]{Lemma}
\newtheorem{corollary}[theorem]{Corollary}
\newtheorem*{theorem*}{Theorem}
\newtheorem*{claim*}{Claim}
\newtheorem*{proposition*}{Proposition}
\newtheorem*{lemma*}{Lemma}
\newtheorem*{corollary*}{Corollary}
\newtheorem{theoremA}{Theorem}
\theoremstyle{definition}
\newtheorem{definition}[theorem]{Definition}
\newtheorem{remark}[theorem]{Remark}
\newtheorem{example}[theorem]{Example}
\newtheorem{notation}[theorem]{Notation}
\newtheorem*{definition*}{Definition}
\newtheorem*{observation*}{Observation}
\newtheorem*{remark*}{Remark}
\newtheorem*{example*}{Example}
\newtheorem*{question*}{Question}
\newtheorem*{exercise*}{Exercise}
\newtheorem*{fact*}{Fact}
\newtheorem*{notation*}{Notation}
\newcommand{\bbD}{\mathbb{D}}
\newcommand{\bbE}{\mathbb{E}}
\newcommand{\bbR}{\mathbb{R}}
\newcommand{\bbS}{\mathbb{S}}
\newcommand{\ii}{^{-1}}
\newcommand{\bt}[1]{\blacktriangle \prs {#1}}
\DeclareMathOperator{\conv}{conv}
\DeclareMathOperator{\inner}{int}
\DeclareMathOperator{\lk}{lk}
\DeclareMathOperator{\girth}{girth}
\newcommand{\set}[1]{\left\{ #1 \right\}}
\newcommand{\prs}[1]{\left( #1 \right)}
\newcommand{\brs}[1]{\left[ #1 \right]}
\crefname{cond}{condition}{conditions}
\crefname{obs}{observation}{observations}
\title{CAT(0) Polygonal Complexes are 2-Median}
\author[1]{Shaked Bader}
\author[2]{Nir Lazarovich}
\affil[1]{Department of Mathematics, University of Oxford, Oxford, UK}
\affil[2]{Department of Mathematics, Technion—Israel Institute of Technology, Haifa, Israel}
\affil[ ]{\textbf{shaked.bader@gmail.com,\ nirlazarovich@gmail.com}}
\date{}
\begin{document}
\maketitle
\begin{abstract}
Median spaces are spaces in which for every three points the three intervals between them intersect at a single point. It is well known that rank-1 affine buildings are median spaces, but by a result of Haettel, higher rank buildings are not even coarse median.

We define the notion of ``2-median space'', which roughly says that for every four points the minimal discs filling the four geodesic triangles they span intersect in a point or a geodesic segment.
We show that CAT(0) Euclidean polygonal complexes, and in particular rank-2 affine buildings, are 2-median. 
In the appendix, we recover a special case of a result of Stadler of a Fary-Milnor type theorem and show in elementary tools that a minimal disc filling a geodesic triangle is injective.
\end{abstract}

\section{Introduction}
It is a well known fact that $\mathbb{R}$ and metric trees have the property that for any three points, the intersection of the geodesic segments between them is a single point. That is, metric trees are median in the following sense:

\begin{definition}
Let $(X,d)$ be a metric space 
\begin{itemize}
    \item For $x,y\in X$ we define the \emph{interval} between $x,y$ to be $$[x,y] = \set{z\;\mid\;d(x,z)+d(z,y) = d(x,y)}.$$
    
    \item The space $X$ is \emph{median} if for any $x,y,z\in X$, we have that $[x,y]\cap[x,z]\cap [y,z]=\set{m(x,y,z)}$ is a unique point.
\end{itemize}
\end{definition}

Examples of median spaces include: metric trees,  $\ell^1$ products of median spaces, $\ell^1(X)$ and $L^1(X,\mu)$, the vertices of a CAT(0) cube complexes with the induced graph metric, see Roller \cite{Roller}. In fact, by a result of Chepoi \cite{Chepoi} the class of 1-skeleta of CAT(0) cube complexes coincides with the class of median graphs.

Median spaces have been extensively studied: Niblo and Reeves \cite{niblo1997groups} proved that any action of a Property (T) group on a finite-dimensional CAT(0) cube complex has a global fixed point. 
In fact, Chatterji, Drutu and Haglund  \cite{Chatterji} prove that a locally compact second countable group has Property (T) if and only if any continuous action of it on a median space has bounded orbits.
Sapir \cite{sapir} proved that acting on a median space implies the Rapid Decay Property. 

Median algebras, which are spaces equipped with a map satisfying algebraic properties that the geometric median satisfies, are studied as well, see the work of Fiovaranti \cite{Fior1}, \cite{fior2} and Bader and Taller \cite{taller}.
In 2013 Bowditch defined a generalisation of a median space, called \emph{coarse median}, and showed that the mapping class group of a surface is a coarse median space. Niblo, Wright and Zhang \cite{Niblo} further simplified this definition and gave a definition of a \emph{coarse median algebra}.

In view of the above, we would like to have a higher-rank analogue. However, Haettel \cite{Haettel} proved that higher rank symmetric spaces and thick affine buildings which are not products of trees are not even coarse median spaces. 
This shows that one needs to modify the definition of median to fit the higher rank setting. We will focus on dimension two. 

As a first step, let us examine the case of the Euclidean plane $\bbE^2$. Clearly, $\bbE^2$ is not a median space as for a triple of non-collinear points the geodesic segments between them do not intersect. 
Instead, let us consider four points $x_1,x_2,x_3,x_4\in \mathbb{E}^2$ and the four full triangles they span $\set{\bt{x_i,x_j,x_k}}_{1\leq i<j<k\leq4}$. Note that if $x_1,x_2,x_3,x_4$ are non-collinear then the intersection of those triangles is a single point, as in \cref{fig:euc}, where in the figure we see the two different possible arrangements (the top and bottom of the figure) of the points in the plane and the four full triangles are coloured. We can see that $o$ is the unique point in the intersection of the four full triangles. We note that the Helly property for the intersection of convex sets gives us that the intersection exists in this case.

\begin{figure}[htp]
    \centering
    \includegraphics[]{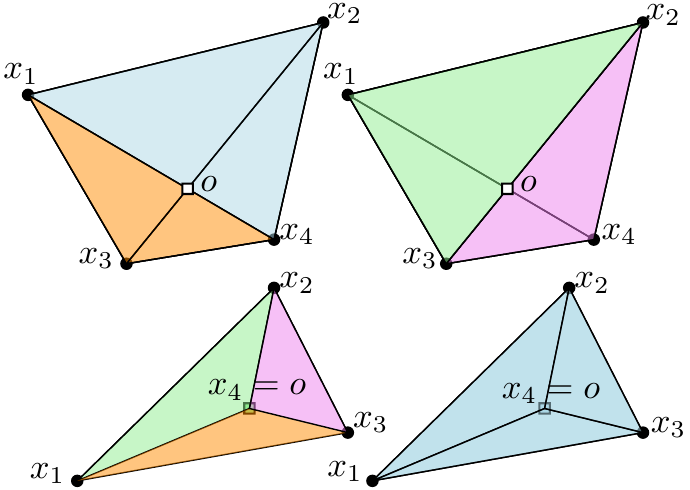}
    \caption{Four points in the Euclidean plane and the triangles they span}
    \label{fig:euc}
    \end{figure}
    
In the degenerate case, in which all four points are on a common line, we have that the intersection is an interval, it is exactly the interval $\overline{x_ix_j}\cap \overline{x_kx_l}$ for $\set{i,j,k,l}=\set{1,2,3,4}$ for which that intersection is non-empty.

We prove that a similar phenomenon, which we call \emph{2-median}, holds in 2-dimensional CAT(0) Euclidean polygonal complexes.
It is not clear to us what should ultimately be the general definition of a 2-median space, and we therefore refrain from giving such. 
Instead, we give an \emph{ad hoc} definition in our restricted setting of 2-dimensional CAT(0) Euclidean polygonal complexes, and show that it always holds.

Let $X$ be a 2-dimensional CAT(0) Euclidean polygonal complexes, and 
let $x,y,z\in X$. 
In \cref{triangle}, we define the full triangle $\bt{x,y,z}$ as the minimal subset of $X$ necessary in order to fill the geodesic triangle $\triangle(x,y,z)$ by a disc.
If $\triangle(x,y,z)$ is a Jordan curve, the full triangle $\bt{x,y,z}$ coincides with the image of the (unique) minimal disc in the sense of Lytchak and Wenger \cite{lytchak}. In this case, it was shown by Stadler \cite{stadler2021} that $\bt{x,y,z}$ is homeomorphic to a disc.

\begin{definition} \label{def of 2-median}
The space $X$ is a \emph{2-median space} if for any $x_1,x_2,x_3,x_4\in X$ either there exists $\set{i,j,k,l}=\set{1,2,3,4}$ such that $\overline{x_ix_j}\cap\overline{x_kx_l}$ is an interval, in which case $\bigcap_{1\leq i<j<k\leq 4}{\bt{x_i,x_j,x_k}}=\overline{x_ix_j}\cap\overline{x_kx_l}$, or $\bigcap_{1\leq i<j<k\leq 4}{\bt{x_i,x_j,x_k}}$ is a single point.
\end{definition}

Our main result is the following:

\begin{theoremA} \label{main theorem}
2-dimensional CAT(0) polygonal complexes are 2-median. 
\end{theoremA}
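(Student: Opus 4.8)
The plan is to reduce to a generic configuration, establish a triangulation lemma for full triangles, and then run a Knaster–Kuratowski–Mazurkiewicz-type argument inside one of the four full triangles.

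\textbf{Reductions and a triangulation lemma.} First I would dispose of degenerate configurations: if two of the $x_i$ coincide, or if some geodesic triangle $\triangle(x_i,x_j,x_k)$ fails to be a Jordan curve — which, in a CAT(0) space, happens exactly when two of its sides share a nondegenerate initial segment, forcing a tree-like coincidence among $\overline{x_ix_j},\overline{x_jx_k},\overline{x_ix_k}$ — the relevant part of the configuration lives in an $\bbR$-tree or on a line, where the median property (resp. the interval-overlap alternative) is classical and one checks \cref{def of 2-median} by hand. So assume the four points are distinct and all four triangles $\triangle_k:=\triangle(\{x_1,\dots,x_4\}\setminus\{x_k\})$ are Jordan curves; by Stadler's theorem together with the appendix, $T_k:=\bt{\{x_1,\dots,x_4\}\setminus\{x_k\}}$ is then an embedded topological disc bounded by $\triangle_k$. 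The workhorse is then: if $w\in\bt{x,y,z}$ then
\[
\bt{x,y,z}=\bt{x,y,w}\cup\bt{y,z,w}\cup\bt{x,z,w},
\]
the three pieces meet pairwise exactly along $\overline{xw},\overline{yw},\overline{zw}$, and all three meet exactly in $\{w\}$ (with the obvious collapses when $w$ lies on $\triangle(x,y,z)$). Its proof uses minimality twice: the geodesics $\overline{xw},\overline{yw},\overline{zw}$ stay inside $\bt{x,y,z}$ (an excursion would let one replace a sub-piece by a strictly smaller filling), and gluing the minimal fillings of the three sub-triangles along these geodesics produces a disc filling $\triangle(x,y,z)$ of no greater area, hence equal, hence — by minimality and embeddedness — these three pieces tile $\bt{x,y,z}$ with the asserted incidences.

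\textbf{Existence of the central point via KKM.} Using the triangulation lemma I would first exhibit a ``top'' index $m$ and two further indices $a,b\in\{1,2,3,4\}\setminus\{m\}$ with $T_m\subseteq T_a\cup T_b$. When some point lies in the full triangle of the other three — say $x_4\in T_4$ — the triangulation lemma gives $T_1\subseteq T_4$, so $(m;a,b)=(1;4,2)$ works; in the remaining ``convex'' case one produces such a triple by taking a minimal disc filling one of the three quadrilateral loops on the four points, cutting it in each of the two diagonal directions, and identifying the pieces (by an area comparison) with two of the $T_k$. Let $c$ be the fourth index and put $C_j:=T_m\cap T_j$ for $j\in\{a,b,c\}$, so $\bigcap_{k=1}^4 T_k=C_a\cap C_b\cap C_c$. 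Then $C_a\cup C_b=T_m\cap(T_a\cup T_b)=T_m$, the whole disc, hence $C_c=(C_c\cap C_a)\cup(C_c\cap C_b)$; the set $C_c=T_m\cap T_c$ is connected (it contains the common side $\overline{x_ax_b}$ of $\triangle_m,\triangle_c$ and, being an intersection of two discs glued along a boundary arc, is connected); and $C_c\cap C_a=T_m\cap T_c\cap T_a$ contains $x_b$ while $C_c\cap C_b=T_m\cap T_c\cap T_b$ contains $x_a$ (each is the unique vertex of $\{x_1,\dots,x_4\}$ lying on all three triangles involved), so both are nonempty. A connected set which is the union of two nonempty closed subsets has those subsets meeting; hence $C_c\cap C_a$ and $C_c\cap C_b$ meet, and their common points are exactly $\bigcap_{k=1}^4 T_k$, which is therefore nonempty.

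\textbf{Pinning down the intersection.} It remains to see $\bigcap_k T_k$ is a point or an interval of the prescribed shape. Since $\bigcap_k T_k\subseteq C_c=T_m\cap T_c$, and — choosing $(m;a,b)$ appropriately — this intersection is exactly the arc $\overline{x_ax_b}$, we get $\bigcap_k T_k=\overline{x_ax_b}\cap T_a\cap T_b$. As $\overline{x_ax_b}\subseteq T_m\subseteq T_a\cup T_b$, the arc is covered by its two closed sub-arcs $\overline{x_ax_b}\cap T_a$ (containing the endpoint $x_b$) and $\overline{x_ax_b}\cap T_b$ (containing $x_a$), so their intersection — i.e.\ $\bigcap_k T_k$ — is a sub-arc $I$ or a single point. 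If $I$ is nondegenerate then $I\subseteq T_a\cap T_b$, which shares the side $\overline{x_cx_m}$; applying the triangulation lemma at the two endpoints of $I$ forces $I=\overline{x_ax_b}\cap\overline{x_cx_m}$, and conversely this complementary-pair intersection is always contained in $\bigcap_k T_k$, landing us in the interval alternative of \cref{def of 2-median}. If $I$ is a point, one last application of the triangulation lemma inside each $T_k$ at a point $o\in\bigcap_k T_k$ rules out a second common point.

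\textbf{The main obstacle.} The delicate point is the ``convex'' case of the second paragraph: proving that a good triple $(m;a,b)$ with $T_m\subseteq T_a\cup T_b$ and clean gluing $T_m\cap T_c=\overline{x_ax_b}$ always exists — equivalently, that a minimal quadrilateral filling on the four points splits along a diagonal into two full triangles — together with the connectedness of $C_c=T_m\cap T_c$. Both rest on fine properties of minimal discs in CAT(0) $2$-complexes (existence of a genuine diagonal geodesic inside a minimal filling, and stability of minimality under cut-and-reglue), which is also where the ``geodesics stay inside $\bt{x,y,z}$'' step of the triangulation lemma lives. The reductions, the KKM combinatorics, and the $\bbE^2$-style endpoint analysis in the interval case are routine by comparison.
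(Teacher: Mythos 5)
Your ``triangulation lemma'' is the crux, and it is false: the inclusion $\bt{x,y,w}\cup\bt{y,z,w}\cup\bt{x,z,w}\subseteq\bt{x,y,z}$ fails for $w\in\bt{x,y,z}$, because full triangles in a CAT(0) polygonal complex are \emph{not} convex and the geodesics $\overline{xw},\overline{yw},\overline{zw}$ need not remain inside $\bt{x,y,z}$. This is not a technical gap to be filled by ``fine properties of minimal discs'' --- the paper's own running example (\cref{convex hulls example}, \cref{fig:proj not geod}) is a direct counterexample. In $T\times[0,1]$ with $T$ a tripod on $\{a,b,c\}$ with center $o$, take $p=(a,1)$, $q=(b,0)$, $r=(c,0)$ and $s=(o,0)\in\overline{qr}\subseteq\bt{p,q,r}$. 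Then $\overline{ps}\not\subseteq\bt{p,q,r}$ (as the paper notes explicitly), and moreover $\bt{p,q,s}$ is a genuine Euclidean triangle inside the flat $\overline{ab}\times[0,1]$ which sticks out of $\bt{p,q,r}$ with positive area on the $\overline{oa}$ side. Taking $w$ slightly interior to $\bt{p,q,r}$ produces the same phenomenon. So the equality $\bt{x,y,z}=\bt{x,y,w}\cup\bt{y,z,w}\cup\bt{x,z,w}$ already fails in the simplest branching example, and with it fails every subsequent step that invokes it: the inference $x_4\in T_4\Rightarrow T_1\subseteq T_4$, the proposed identity $T_m\cap T_c=\overline{x_ax_b}$, the claimed covering $T_m\subseteq T_a\cup T_b$, and the endpoint analysis in the interval case. (The opposite inclusion, $\bt{x,y,z}\subseteq\bigcup_{(i,j,k)\neq(1,2,3)}\bt{x_i,x_j,x_k}$, is true and is essentially what the paper proves before applying Sperner's lemma --- but that direction alone does not drive your argument.)

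The paper's strategy avoids this trap precisely because it never decomposes a full triangle into sub-full-triangles along geodesics to an interior point. Instead it proves existence with Sperner's lemma; proves the intersection of all four full triangles is discrete (or one-dimensional in the lanky case) by analyzing, in $\lk(p)$, the four projected triangles around a putative intersection point $p$; and proves uniqueness by building a map from a ``deflated tetrahedron'' (six spiky triangles glued along their edges), folding it to a near-immersion, and invoking a Gauss--Bonnet argument to show the near-immersion is an embedding --- so the six faces meet only at the image of the cone point. If you want to pursue your covering/KKM scheme, you would first need a correct substitute for the triangulation lemma, and the example above shows no purely metric or area-comparison version of it can hold; whatever replaces it would have to work at the level of $\pi_1(X-p)$, which is essentially the route the paper takes.

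One smaller issue: your opening reduction treats the case where some $\triangle(x_i,x_j,x_k)$ is not a Jordan curve as if the whole configuration then lived in a tree or a line. The paper's ``lanky quadrilateral'' case (\cref{1-dim case}) --- where $\overline{x_ix_j}\cap\overline{x_kx_l}$ is a nondegenerate segment --- can occur with all four geodesic triangles being Jordan curves, so this case is not disposed of by your reduction and needs its own argument.
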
 

In \cite{KLa}, Kleiner and Lang define and prove higher rank hyperbolicity properties in a general setting. In particular, they prove that 2-dimensional CAT(0) polygonal complexes have slim simplices; that is, if $f$ maps the boundary of the 3-simplex to a 2-dimensional CAT(0) polygonal complex $X$, such that $f$ is a $(L,a)$ quasi isometry on each facet, then the image of each facet is contained in the $D=D(X,L,a)$ neighbourhood of the image of the other facets.

As we said, we refrain from giving a general definition of 2-median spaces. However, speculating that such a definition will resemble \cref{def of 2-median} (after the appropriate notions of intervals and full triangles are defined), we expect the following:
the 2-median property should essentially be a rank-two phenomenon -- e.g. the intersection of the triangles spanned by four generic points in the 3-dimensional Euclidean space $\bbE^3$ is empty; it should be a non-positive curvature phenomenon -- e.g. the intersection of the triangles of the regular tetrahedron in the sphere $\bbS^2$ is empty;
full triangles should be minimal discs and not convex hulls -- cf. \cref{convex hulls example}. 

The outline of the paper is as follows:

\S\ref{sec: prelims} and \S\ref{sec: planar complexes} are introductory: in \S\ref{sec: prelims} we review some properties of CAT(0) Euclidean polygonal complexes and their links and in \S\ref{sec: planar complexes} we define a generalized disc and a generalized annulus which we will use throughout the text.

In \S\ref{sec: triangles in CAT0} we give an ad hoc topological definition of $\bt{x,y,z}$ for a CAT(0) polygonal complex and characterize it. 

In \S\ref{sec: minimal triangle} we prove using a special case of a result by Stadler \cite{stadler2021}, that in this setting our ad hoc definition coincides with the definition above.

In \S\ref{sec: existence} we show that the intersection of the four full triangles is non-empty by showing that $\bt{x_1,x_2,x_3}\subseteq \bigcup_{(i,j,k)\neq(1,2,3)}\bt{x_i,x_j,x_k}$, and using  Sperner's Lemma.

In \S\ref{sec: discreteness} we discuss the discreteness of the intersection. We show that the intersection of the four full triangles is discrete while the intersection of two of the triangles does not have isolated points.

In \S\ref{sec: 1d case} we prove the theorem in the degenerate case.

In \S\ref{sec: near-immersions} we prove that a \emph{near-immersion} from a disc filling an embedded polygonal curve is injective:

\begin{definition} \label{near-immersion}[See \cite{Fansandladders} Definition 2.13]
    A cellular map $Y\to X$ between two CW-complexes is a \emph{near-immersion} if the restriction $Y-Y^{(0)}\to X$ is locally injective.
\end{definition}

\begin{theorem} \label{immersion}
Let $X$ be a CAT(0) polygonal complex, and let $\Gamma$ be an embedded polygonal curve with ordered vertices $x_1,...,x_n$ such that $\kappa(\Gamma)<4\pi$. Assume $f:\Delta \to X$ is a near-immersion, where $\Delta$ is a finite $\Delta$-complex homeomorphic to a disc and $f|_{\partial \Delta} : \partial \Delta \to \Gamma$ is a homeomorphism, then $f$ is injective.
\end{theorem}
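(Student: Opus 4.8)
The overall plan is a Fáry--Milnor type argument carried out after transporting the piecewise-Euclidean metric of $X$ to $\Delta$ via $f$. First I would check that this transport makes sense: since $f$ is a near-immersion, on the interior of each $2$-cell $\sigma$ of $\Delta$ it is a locally injective cellular map into the interior of a single $2$-cell $P$ of $X$, and a short invariance-of-domain argument (the image is open, closed in $\inner P$, hence all of $\inner P$, and the resulting proper local homeomorphism of discs is a homeomorphism) shows $f|_\sigma$ is a homeomorphism onto $\cl{P}$; declaring each such $f|_\sigma$ an isometry makes $\Delta$ a finite piecewise-Euclidean polygonal disc and $f$ an isometry on every closed cell. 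The goal becomes: $f$ is globally injective.

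Second, I would establish Gromov's link condition in the interior. For an interior vertex $v$ the induced map $\lk(v,\Delta)\to\lk(f(v),X)$ is a local isometry, and it is locally injective because a fold would identify the two $2$-cells along some edge $e$ at $v$ near an interior point of $e$, contradicting the near-immersion property. As $\lk(v,\Delta)$ is a circle and $\lk(f(v),X)$ is $\mathrm{CAT}(1)$, the resulting immersed loop is a closed local geodesic of length $\ge 2\pi$, so the cone angle satisfies $\theta_v\ge 2\pi$; hence $\Delta$ is locally $\mathrm{CAT}(0)$ at all interior points. Plugging this into combinatorial Gauss--Bonnet for the disc $\Delta$, and comparing each interior angle of $\Delta$ at a boundary vertex with the corresponding Alexandrov angle of $\Gamma$ in $X$ (again via the link map, together with the fact that $\Gamma$ is geodesic away from the $x_i$), I would obtain an identity
\[
\kappa(\Gamma)\;=\;2\pi\;+\;\sum_{v}(\theta_v-2\pi)\;+\;D,
\]
with the sum over interior vertices $v$ of $\Delta$ and $D\ge 0$ a sum of boundary defects; in particular $\kappa(\Gamma)\ge 2\pi$ always, and $\kappa(\Gamma)-2\pi<2\pi$ here.

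The crux is to show that a failure of injectivity forces the stronger inequality $\kappa(\Gamma)\ge 4\pi$. Suppose $f$ is not injective. Since $f$ restricts to a homeomorphism on $\partial\Delta$ and to a homeomorphism onto its image on every closed $2$-cell, the non-injectivity is ``global'' (two $2$-cells of $\Delta$ far apart mapping onto the same $2$-cell of $X$, or more generally two disjoint subdiscs with overlapping images). The strategy I would pursue is to isolate from such an overlap two subdiscs $\Delta_1,\Delta_2\subseteq\Delta$ with $f(\Delta_1)=f(\Delta_2)$ and $f|_{\Delta_i}$ a homeomorphism, glue them along their common boundary (matched by $f$) into a piecewise-Euclidean sphere $S$ carrying a cellular map $F\colon S\to X$, and run Gauss--Bonnet on $S$: the interior vertices of the two subdiscs still satisfy $\theta\ge 2\pi$, so the total positive curvature $4\pi=2\pi\chi(S)$ must be concentrated at the equator vertices, where it records exactly the ``double sharpness'' of the curve $F(\partial\Delta_1)$; tracing these sharp corners back, together with the ones already accounted for in $D$, and using that $\Gamma$ is embedded (so the equator curve can be taken to reuse arcs of $\Gamma$ in a controlled way), should upgrade the displayed identity to $\kappa(\Gamma)\ge 4\pi$, the desired contradiction.

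I expect this last step --- extracting the two subdiscs and the sphere from the combinatorics of the failure of injectivity, and matching the $4\pi$ of curvature on $S$ with the total curvature of $\Gamma$ --- to be the real obstacle; the naive Gauss--Bonnet of a single disc only yields $\kappa(\Gamma)\ge 2\pi$, so the extra $2\pi$ is precisely the Fáry--Milnor phenomenon and needs genuine input. Concretely I anticipate using a fan/ladder dichotomy for non-injective near-immersions in the spirit of \cite{Fansandladders} to produce the subdiscs (a fan gives an interior vertex of very large cone angle, hence large interior excess directly; a ladder gives the two sharp transitions that seed the equator corners). The remaining ingredients --- the invariance-of-domain claim in the first step, the precise link-to-angle comparisons, and the degenerate cases where $\Gamma$ passes through vertices of $X$ in the interiors of its edges or where $\Delta_i$ meets $\partial\Delta$ --- I expect to be routine but to require care.
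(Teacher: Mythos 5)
Your first two steps (pulling back the metric to $\Delta$ via the near-immersion, and using preservation of link girth to get the interior link condition for $\Delta$, hence $\Delta$ is $\mathrm{CAT}(0)$; then Gauss--Bonnet with boundary defects $k_{x_i}\le \pi-\angle_{x_i}\Gamma$ and $k_v\le 0$ elsewhere) do match the paper's set-up, and the identity $\kappa(\Gamma)\ge 2\pi + \sum_{v\in\inner(\Delta)}(\theta_v-2\pi)$ is correct. The divergence is in the ``Fáry--Milnor'' step, and there your plan has a genuine gap.

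You propose to extract from a failure of injectivity two subdiscs $\Delta_1,\Delta_2\subseteq\Delta$ with $f(\Delta_1)=f(\Delta_2)$ and $f|_{\Delta_i}$ homeomorphisms, glue them into a piecewise-Euclidean sphere, and account for $4\pi$ on the sphere. There is no reason such matching subdiscs should exist: non-injectivity a priori only gives a pair of points with equal image, and a near-immersion can identify points of $\Delta$ without any two subdiscs being carried homeomorphically onto the same subset of $X$. Even granting the existence of the subdiscs, the equator of the resulting sphere is an \emph{interior} polygonal curve of $\Delta$ (the shared boundary of $\Delta_1,\Delta_2$), not $\partial\Delta=\Gamma$, and it is unclear how to convert curvature concentrated at the equator into a lower bound on $\kappa(\Gamma)$; the remark that the equator ``can be taken to reuse arcs of $\Gamma$'' is not substantiated and will not be true in general. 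You flag this step as the ``real obstacle'', and indeed it is where the argument fails.

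The paper's mechanism is more direct and avoids these issues. Choose $p\neq q$ with $f(p)=f(q)$ at minimal distance, so the $\Delta$-geodesic $\gamma$ from $p$ to $q$ is mapped by $f$ to a \emph{simple closed} polygon in $X$. At each vertex $v$ in $\inner(\gamma)$ define the turning $T_v=\pi-\angle_{f(v)}(f\circ\gamma)$. The two key facts are: first, $\sum_{v\in\inner(\gamma)} T_v\ge\pi$ (total turning of a closed geodesic polygon in a $\mathrm{CAT}(0)$ space is $\ge 2\pi$, and one vertex is discarded); second, because $\gamma$ is geodesic in $\Delta$, both arcs of $\lk(v,\Delta)$ between the two directions of $\gamma$ have length $\ge \pi$, and since each must close up to an essential cycle of length $\ge 2\pi$ in $\lk(f(v),X)$ when concatenated with a short path realizing the angle in $X$, each arc has length $\ge 2\pi - (\pi - T_v)$. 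This yields $K_v\le -2T_v$ for interior $v$, and $k_v\le -2T_v+(\pi-\angle_{x_i}\Gamma)$ when $v=x_i$. Inserting into Gauss--Bonnet then produces $2\pi < -2\pi + \kappa(\Gamma) \le 2\pi$, a contradiction. The ``extra $2\pi$'' you are looking for is thus extracted not from a sphere, but from the factor $2$ in $K_v\le -2T_v$ along $\gamma$, which is exactly the double-counting that the Fáry--Milnor phenomenon provides. I would recommend replacing the subdisc/sphere step with this geodesic-and-turning argument.
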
 

In \S\ref{sec: minimal tetrahedron} we prove the theorem in the general case. The idea is to obtain a near-immersion from a \emph{deflated tetrahedron} (see \cref{fig:deflated tetra})
to our space and show that it is actually injective. We obtain a near-immersion by taking any cellular map from a deflated tetrahedron to our space and folding, and showing that when the procedure terminates we remain with a deflated tetrahedron.

\begin{figure}[htp]
    \centering
    \includegraphics[]{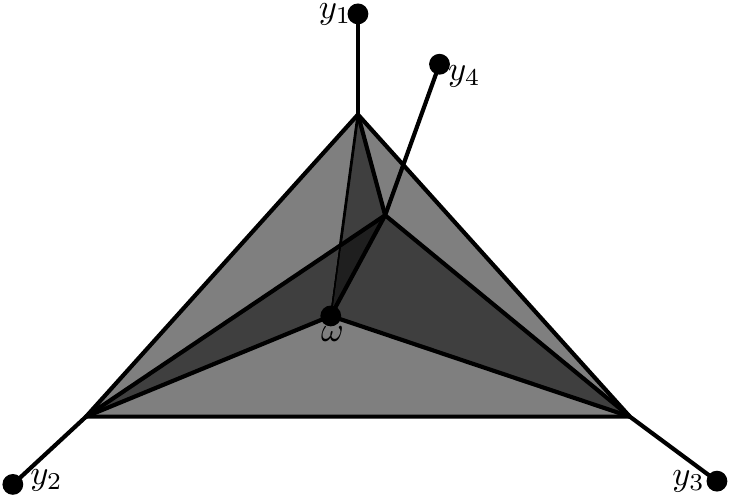}
    \caption{Deflated tetrahedron}
    \label{fig:deflated tetra}
    \end{figure}

      Injectivity will follow using \cref{immersion} and the two facts we prove in \S\ref{sec: discreteness} about intersections of two and four full triangles.

In \S\ref{sec:conclusion} we discuss further directions and in \S\ref{sec: appendix} we give an elementary proof of Stadler's result for CAT(0) polygonal complexes.

\paragraph{Acknowledgements.} 
The authors would like to thank Tobias Hartnick and Nima Hoda for fruitful discussions at an early stage of this project, and Michah Sageev for numerous helpful suggestions and corrections of this manuscript.
The authors are indebted to the anonymous referee for the careful reading of this manuscript and the valuable suggested improvements.
Both authors were partially supported by the Israel Science Foundation (grant no. 1562/19).








\section{CAT(0) polygonal complexes and their links}
\label{sec: prelims}
We assume that the reader is familiar with CAT(0) spaces. Specifically, we will focus on \emph{Euclidean CAT(0) polygonal complexes} i.e. CAT(0) polygonal complexes whose cells are convex Euclidean polygons (not assumed to be regular). The relevant definitions can be found in parts I and II of \cite{BridHäf}.

\begin{notation}

\begin{itemize}
    \item 
If $\xi,\zeta$ are points in $\lk(p)$, we denote by $\angle_p(\xi,\zeta)$ the angle between $\xi$ and $\zeta$ in $X$ (as defined in \cite[Definition I.1.12]{BridHäf}). 
This defines a metric $\angle_p$ on $\lk(x)$.
We denote the length of a path $\alpha$ in $\lk(p)$ by $\measuredangle_p \alpha$, and by $\measuredangle_p(\xi,\zeta)$ the induced length metric, i.e. the length of the shortest path connecting them in $\lk(p)$. Note that $\angle_p$ does not necessarily equal the metric $\measuredangle_p$. 

We will abuse notation and for $x,y\in X$ and $\gamma$ a path in $X$ we denote by $\angle_p(x,y),\measuredangle_p(\alpha),\measuredangle_p(x,y)$ the angle, length and distance of the projections of $x,y$ and $\gamma$ to $\lk(p)$. 

In cases in which the point referred to is clear, we will sometimes omit the subscript $p$, 

\item Let $Y$ be a geodesic space, $x,y\in Y$. When there exists a unique geodesic between $x,y$ (as is the case in CAT(0) spaces and when $x,y\in \lk(q)$ and $\angle_q(x,y)<\pi$ for $q$ in a CAT(0) space), we denote it by $\overline{xy}$.
\end{itemize}

\end{notation}

The CAT(0) condition for CAT(0) polygonal complexes takes a simple form:

\begin{claim}[{Link condition \cite[Theorem II.5.2]{BridHäf}}] \label{link condition}
A Euclidean polygonal complex is CAT(0) if and only if it is simply connected and for each 0-cell $v$ we have that the length of every injective loop in $\lk(v)$ (i.e. the \emph{girth} of $\lk(v)$) has length at least $2\pi$.
\end{claim}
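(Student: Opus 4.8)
The plan is to follow the classical route toward the Gromov link condition: combine the Cartan--Hadamard theorem (to pass from local to global non-positive curvature) with Berestovskii's cone theorem (to convert local curvature at a vertex into a metric condition on its link), and then analyse directly when a metric graph is $\mathrm{CAT}(1)$. One direction is essentially free: a $\mathrm{CAT}(0)$ space is contractible, hence simply connected, and is locally $\mathrm{CAT}(0)$, so the content is the converse together with the translation of ``locally $\mathrm{CAT}(0)$'' into the girth condition on links.

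First I would reduce to a local statement. A connected Euclidean polygonal complex $X$ (which, having finitely many isometry types of cells, is a complete geodesic space \cite{BridHäf}) satisfies the hypotheses of the Cartan--Hadamard theorem \cite[Theorem II.4.1]{BridHäf}, so $X$ is $\mathrm{CAT}(0)$ if and only if it is simply connected and locally $\mathrm{CAT}(0)$. It thus remains to show that $X$ is locally $\mathrm{CAT}(0)$ exactly when $\girth(\lk(v))\ge 2\pi$ for every $0$-cell $v$. I would verify local $\mathrm{CAT}(0)$-ness point by point: a point in the interior of a $2$-cell has a Euclidean disc neighbourhood; a point in the interior of an edge but not at a $0$-cell has a neighbourhood isometric to a ball in a metric product $\bbR\times T$ for some finite metric tree $T$ (the cross-section of the family of $2$-cells through that edge), hence is of non-positive curvature; and a sufficiently small ball around a $0$-cell $v$ is isometric to a ball around the apex of the Euclidean cone $C_0(\lk(v))$. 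By Berestovskii's theorem \cite[Theorem II.3.14]{BridHäf}, $C_0(\lk(v))$ is $\mathrm{CAT}(0)$ if and only if $\lk(v)$ is $\mathrm{CAT}(1)$, and since the cone is scale invariant and locally $\mathrm{CAT}(0)$ away from its apex, this is equivalent to the small ball around $v$ being $\mathrm{CAT}(0)$. Hence $X$ is locally $\mathrm{CAT}(0)$ iff every vertex link is $\mathrm{CAT}(1)$.

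The remaining point, where the combinatorial content lies, is to show that a metric graph $G$ is $\mathrm{CAT}(1)$ if and only if $\girth(G)\ge 2\pi$; applied to $\lk(v)$ — whose vertices are the edges of $X$ at $v$, whose edges are the corners of $2$-cells at $v$, and whose edge lengths are the corresponding interior angles — this completes the proof. The key observation is that in a metric graph any two distinct directions at a vertex are at distance $\pi$ in the relevant link; consequently a locally geodesic path never backtracks, and every embedded cycle is automatically a closed geodesic. So if $\girth(G)<2\pi$, a shortest embedded cycle is a closed geodesic of length $<2\pi$, which no $\mathrm{CAT}(1)$ space contains; thus $\mathrm{CAT}(1)$ forces $\girth(G)\ge 2\pi$. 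Conversely, $G$ is locally $\mathrm{CAT}(1)$ — small balls in $G$ are trees, being cones on discrete sets — and a shortest closed geodesic in $G$, should one exist, can be taken embedded (a non-backtracking closed geodesic that revisits a vertex splits into two shorter closed geodesics, one of which is again non-backtracking there), so that the absence in $G$ of closed geodesics of length $<2\pi$ is equivalent to $\girth(G)\ge 2\pi$; the local-to-global theorem for $\mathrm{CAT}(1)$ spaces \cite{BridHäf} then upgrades local $\mathrm{CAT}(1)$-ness together with the absence of short closed geodesics to global $\mathrm{CAT}(1)$-ness. The hard part will be precisely this last local-to-global step: unlike the $\mathrm{CAT}(0)$ case, being locally $\mathrm{CAT}(1)$ does not by itself imply being globally $\mathrm{CAT}(1)$, and one genuinely needs the hypothesis that there are no closed geodesics below the threshold $2\pi$ — the perimeter bound built into the definition of $\mathrm{CAT}(1)$. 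A second delicate point is pinning down the metric on the link of a vertex of a graph (all distinct directions at distance $\pi$), which is exactly what makes the Euclidean cone on a one-dimensional $\lk(v)$ a tree and identifies the systole of $\lk(v)$ with its girth.
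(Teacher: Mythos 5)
The paper does not prove this claim at all: it is imported verbatim from Bridson--Haefliger \cite[Theorem II.5.2]{BridHäf}, and your outline is essentially the standard proof behind that citation (Cartan--Hadamard to globalize, Berestovskii's cone theorem at vertices, product neighbourhoods along edges, and the characterization of CAT(1) metric graphs by girth), so in substance you are reconstructing the cited argument rather than competing with anything in the paper.

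One micro-step is wrong as written, though easily repaired. Your parenthetical claim that a non-backtracking closed geodesic revisiting a vertex $v$ splits into two shorter closed geodesics, one of which is non-backtracking at $v$, is false in general: take two edges $e_a=\overline{vw_a}$, $e_b=\overline{vw_b}$ with an embedded cycle through $w_a$ avoiding $e_a$ and one through $w_b$ avoiding $e_b$; the loop $v\,e_a\,(\text{cycle at }w_a)\,e_a\,v\,e_b\,(\text{cycle at }w_b)\,e_b\,v$ is non-backtracking, but both loops obtained by splitting at $v$ backtrack there. The conclusion you actually need -- that a closed geodesic of length $<2\pi$ forces $\girth<2\pi$ -- follows more directly: the edges traversed by a non-backtracking closed path cannot form a tree, so its image contains an embedded cycle, and since every edge of the image is traversed at least once, that cycle has length at most the length of the path. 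With that substitution, the remaining ingredients are fine: the local-to-global statement for CAT(1) (locally CAT(1) plus no closed geodesic of length $<2\pi$ implies CAT(1), applied to the compact finite graph $\lk(v)$) is a genuine theorem of Bowditch-type available in the literature, and you are right to identify it as the one non-elementary input; alternatively, for finite metric graphs one can verify CAT(1) directly from girth $\ge 2\pi$ without invoking it.
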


In the remainder of this section, we discuss the connection between the link of point $p\in X$ and $X-p$.

Recall that there exists $\epsilon>0$ such that the sphere around $p$ of radius $\epsilon$, $S$, can be identified with $\lk(p)$, and the closest point projection, $\pi_p:X-p\to S$, is a deformation retraction of $X-p$ (see \cite[Proposition II.2.4]{BridHäf}).

When it is clear which point we are referring to, we will denote $\pi_p(x)$ by $\hat{x}$.

\begin{proposition} \label{path in link}
Let $X$ be a CAT(0) polygonal complex, $p\in X$.
 Let $x,y\in X$, then $\angle_p(x,y)=\min\set{\measuredangle_p(\hat{x},\hat{y}),\pi}$.

    In particular, if $p\notin \overline{xy}$, then $\angle_p(x,y)<\pi$, so $\angle_p(x,y)=\measuredangle_p(\hat{x},\hat{y})$ and there exists a unique path in $\lk(p)$ between $\hat{x},\hat{y}$ of length $\angle_p (x,y)$.

\end{proposition}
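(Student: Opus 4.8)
The plan is to relate the Alexandrov angle $\angle_p(x,y)$ directly to the geometry of the link via the comparison geometry of the CAT(0) polygonal complex. First I would recall the definition of $\angle_p(x,y)$ as the Alexandrov angle at $p$ between the geodesics $\overline{px}$ and $\overline{py}$, and note that since $X$ is CAT(0), these geodesics are unique and the angle is well-defined and realized by the limit of comparison angles. The key geometric input is that near $p$ the space looks like the Euclidean cone over $\lk(p)$: there is $\epsilon>0$ so that the ball $B(p,\epsilon)$ is isometric to the $\epsilon$-ball in the Euclidean cone $C_\epsilon(\lk(p))$ (this is standard for Euclidean polygonal complexes, and underlies \cite[Proposition II.2.4]{BridHäf} which is already cited). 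In the Euclidean cone over a metric space $L$, the distance between two cone points at radius $t$ in directions $\xi,\hat x$ and $\zeta,\hat y$ is computed by the spherical/Euclidean cosine rule using exactly $\min\{\measuredangle_p(\xi,\zeta),\pi\}$ as the ``angular distance'', where $\measuredangle_p$ is the length metric on $L$ (capped at $\pi$ because in a cone only angular distances up to $\pi$ affect the cone distance). Hence the comparison angle at $p$ in the triangle with vertices $p$, and points at small radius $t$ along $\overline{px}$, $\overline{py}$, equals $\min\{\measuredangle_p(\hat x,\hat y),\pi\}$ for all sufficiently small $t$; taking $t\to 0$ gives $\angle_p(x,y)=\min\{\measuredangle_p(\hat x,\hat y),\pi\}$.

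To make this rigorous I would argue as follows. Let $x_t$ be the point on $\overline{px}$ at distance $t$ from $p$ and similarly $y_t$ on $\overline{py}$, for $t<\epsilon$ small. Then $\hat{x_t}=\hat x$ and $\hat{y_t}=\hat y$ (same directions), and within $B(p,\epsilon)\cong C_\epsilon(\lk(p))$ the geodesic from $x_t$ to $y_t$ and the distance $d(x_t,y_t)$ are governed by the cone formula: if $\measuredangle_p(\hat x,\hat y)<\pi$ the geodesic in the cone goes "over the top" following a shortest path in $\lk(p)$ between $\hat x$ and $\hat y$ (which exists and is unique, since the girth of $\lk(p)$ is at least $2\pi$ by the link condition \cref{link condition}, so balls of radius $<\pi$ in $\lk(p)$ are uniquely geodesic and convex), and $d(x_t,y_t)^2 = 2t^2(1-\cos\measuredangle_p(\hat x,\hat y))$; if $\measuredangle_p(\hat x,\hat y)\ge\pi$ then the geodesic passes through $p$ and $d(x_t,y_t)=2t$. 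In both cases the Euclidean comparison angle $\tilde\angle_p(x_t,y_t)$ equals $\min\{\measuredangle_p(\hat x,\hat y),\pi\}$, independent of $t$, and since $X$ is CAT(0) this comparison angle is nonincreasing in $t$ and its limit as $t\to 0$ is $\angle_p(x,y)$; being constant, $\angle_p(x,y)=\min\{\measuredangle_p(\hat x,\hat y),\pi\}$.

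For the "in particular" clause: if $p\notin\overline{xy}$, then the concatenation $\overline{px}\cup\overline{py}$ is not a geodesic (otherwise $p$ would lie on a geodesic between $x$ and $y$, hence on $\overline{xy}$ by uniqueness), so $\angle_p(x,y)<\pi$ — this is the standard characterization that the angle at an interior-looking vertex equals $\pi$ iff the broken geodesic is in fact geodesic, which follows from the first law of cosines / the CAT(0) inequality. Then $\min\{\measuredangle_p(\hat x,\hat y),\pi\}<\pi$ forces $\measuredangle_p(\hat x,\hat y)<\pi$, hence $\angle_p(x,y)=\measuredangle_p(\hat x,\hat y)$; and by the girth-$\ge 2\pi$ condition (\cref{link condition}), any two points in $\lk(p)$ at length-distance $<\pi$ are joined by a unique shortest path in $\lk(p)$ (a shorter second path would close up into an injective loop of length $<2\pi$), realizing the length $\angle_p(x,y)$.

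The main obstacle I anticipate is making the cone-neighborhood identification and the "geodesic goes over the top following a shortest path in the link" claim fully precise — i.e., carefully justifying that within the small ball $B(p,\epsilon)$ the distance and geodesics are exactly the Euclidean-cone ones, and that a shortest path in $\lk(p)$ between $\hat x$ and $\hat y$ of length $<\pi$ exists and is unique. The existence and uniqueness is where the link condition (\cref{link condition}) does the real work; the rest is essentially bookkeeping with the cosine rule in a Euclidean cone, which is classical (cf. \cite[Chapter I.5]{BridHäf}).
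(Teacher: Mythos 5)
Your proposal is correct and follows the same strategy as the paper: identify a small ball around $p$ with a truncated Euclidean cone over $\lk(p)$ and read the angle off the cone metric. You supply more detail than the paper's own proof — in particular you justify the ``in particular'' clause explicitly (that $p\notin\overline{xy}$ forces $\angle_p(x,y)<\pi$, and the uniqueness of the length-realizing path via the girth condition), which the paper leaves implicit.
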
 

\begin{proof}
By \cite[Theorem I.7.16]{BridHäf} there exists $r>0$ such that $S$, the sphere or radius $r$ around $p$, is homeomorphic to $\lk(p)$ and the ball or radius $r$ around $p$ is isometric to $C(\lk(p))$; where $C(\lk(p))$ is the cone over $\lk(p)$ with the metric defined such that for $u,v\in \lk(p)$ the angle between $\overline{pu},\overline{pv}$ in $\lk(p)$ is $\min\set{\measuredangle_p(x,y),\pi}$.

We can assume $x,y\in S$.
As isometries preserve angles, we have that $\angle_p(x,y)=\min\set{\measuredangle_p(x,y),\pi}$. Hence, if $\angle_p(x,y)<\pi$, it must be that $\measuredangle_p(x,y)=\angle_p(x,y)$.
\end{proof}

As the next example shows, in general if $X$ is a CAT(0) polygonal complex, $p\in X$ and $q,r\in X$ such that $p\notin \overline{qr}$, it might be that $\pi_p(\overline{qr})\neq \overline{\pi_p(q)\pi_p(r)}$. However, we will see in \cref{projection to link} that $\pi_p(\overline{qr}),\overline{\pi_p(q)\pi_p(r)}$ are homotopic relative to endpoints in $\lk(p)$.

\begin{example}
Let $T$ be the metric graph with one vertex $o$ of degree three and three 1-cells of length 1 with vertices $a,b,c$.
Consider $T\times [0,1]$ with its CAT(0) metric, that is the $\ell^2$ metric, and the points $p=(a,1),q=(b,0),r=(c,0)$, as in \cref{fig:proj not geod}. Let $s$ be the intersection of $\overline{qr}$ with $\set{o}\times [0,1]$, that is, $s=(o,0)$. Note that $\pi_p(s)\notin \overline{\pi_p(q)\pi_p(r)}=\set{\pi_p(q)}$.
\end{example}

\begin{figure}[H]
    \centering
    \includegraphics[]{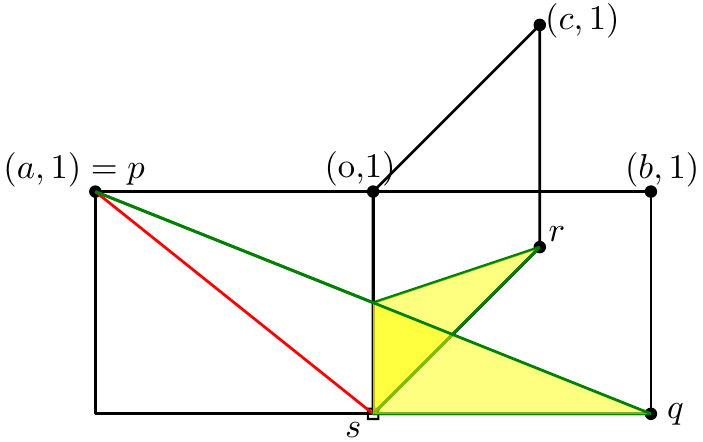}
    \caption{Example in which $\pi_p(\overline{qr})\neq \overline{\pi_p(q)\pi_p(r)}$}
    \label{fig:proj not geod}
    \end{figure}

\begin{claim} \label{projection to link}
Let $X$ be a CAT(0) polygonal complex and $p\in X$. 
Given $q,r\in X$ such that $p\notin \overline{qr}$ we have that $\pi_p(\overline{qr})$ is homotopic relative to endpoints in $\lk(p)$ to the geodesic between $\pi_p(q),\pi_p(r)$. In particular, there exists a unique geodesic between $\pi_p(q),\pi_p(r)$.
\end{claim}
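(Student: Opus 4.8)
The plan is to lift everything to the cone $C(\lk(p))$ over the link, where distances and geodesics are computed from the link metric $\measuredangle_p$, and to exploit the fact that the closest-point projection $\pi_p$ is homotopic to the identity on $X - p$. First I would fix $\epsilon > 0$ small enough that the ball $B$ of radius $\epsilon$ around $p$ is isometric to a truncated cone on $\lk(p)$, with its boundary sphere $S \cong \lk(p)$, and $\pi_p : X - p \to S$ the deformation retraction from \cite[Proposition II.2.4]{BridHäf}. The hypothesis $p \notin \overline{qr}$ is exactly what guarantees, via \cref{path in link}, that $\angle_p(q,r) < \pi$, hence that $\measuredangle_p(\hat q, \hat r) = \angle_p(q,r) < \pi$ and there is a \emph{unique} geodesic $\sigma$ in $\lk(p)$ from $\hat q$ to $\hat r$ of that length; this settles the "in particular" clause immediately and gives us a concrete target curve to compare $\pi_p(\overline{qr})$ against.

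Next I would set up the homotopy. Since $\overline{qr}$ avoids $p$, it is a path in $X - p$, so $\pi_p|_{\overline{qr}}$ is well-defined and, via the straight-line deformation retraction of $X - p$ onto $S$ (pushing each point $x$ toward $\hat x$ along the geodesic $\overline{px}$), the curve $\pi_p(\overline{qr})$ is homotopic \emph{in} $X - p$, rel endpoints $\hat q, \hat r$, to the curve $\overline{qr}$ itself regarded up to that retraction — more precisely, composing the retraction homotopy with $\overline{qr}$ shows $\pi_p \circ \overline{qr} \simeq \overline{qr}$ rel endpoints inside $X - p$. Then, because $X - p$ deformation retracts onto $S \cong \lk(p)$, any two paths in $X - p$ with the same endpoints that are homotopic rel endpoints in $X - p$ project to paths in $\lk(p)$ that are homotopic rel endpoints; so it suffices to show $\pi_p(\overline{qr})$ and the geodesic $\sigma$ are homotopic rel endpoints in $\lk(p)$. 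For that I would invoke that $\lk(p)$ is a graph whose universal cover is a tree (this is where the link condition \cref{link condition} — girth $\geq 2\pi$, forcing $\lk(p)$ to have no short loops and in particular the relevant arc to be determined — enters), so that once two arcs in $\lk(p)$ with common endpoints lie in a common simply-connected region, or more simply once they are freely homotopic rel endpoints in $\lk(p)$, the short geodesic $\sigma$ is the unique taut representative. Concretely: $\pi_p(\overline{qr})$ is a path from $\hat q$ to $\hat r$ in $\lk(p)$, it is homotopic rel endpoints (in $\lk(p)$, by the previous step) to \emph{some} path, and within a graph a path is homotopic rel endpoints to the unique locally geodesic (reduced) path in its homotopy class; since $\measuredangle_p(\hat q,\hat r) < \pi < \girth(\lk(p))$ that reduced path is exactly $\sigma$.

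The main obstacle — and the step I would spend the most care on — is the claim that $\pi_p(\overline{qr})$ is homotopic rel endpoints to $\overline{qr}$ \emph{inside $X - p$}, i.e. controlling the homotopy so that it never passes through $p$. The straight-line-to-the-cone-point retraction handles this for points already inside $B$, but $\overline{qr}$ may leave $B$; the fix is to use the honest deformation retraction $\pi_p : X - p \to S$ of \cite[Proposition II.2.4]{BridHäf}, which by construction stays in $X - p$ throughout, and note that its restriction to the compact arc $\overline{qr}$ supplies the required rel-endpoints homotopy (the endpoints $q,r$ themselves need not be on $S$, but they are moved to $\hat q, \hat r$ consistently, and one reparametrizes so the homotopy of $\pi_p(\overline{qr})$ is rel its endpoints $\hat q,\hat r$). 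A secondary subtlety is that $\measuredangle_p$ and $\angle_p$ differ in general, so I must consistently work with the length metric $\measuredangle_p$ on $\lk(p)$ when I speak of geodesics and girth, and only translate back to $\angle_p$ via \cref{path in link} at the end; keeping that bookkeeping straight is what makes the uniqueness of $\sigma$ (and hence of the geodesic between $\pi_p(q),\pi_p(r)$) clean.
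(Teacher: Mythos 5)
There is a genuine gap, and it sits at the very point where the content of the claim lives. You correctly set up $S\cong\lk(p)$, the retraction $\pi_p:X-p\to S$, and the unique geodesic $\sigma$ from $\hat q$ to $\hat r$ coming from $\measuredangle_p(\hat q,\hat r)<\pi$ and the girth bound. But the argument never actually ties $\pi_p(\overline{qr})$ to the homotopy class of $\sigma$. The retraction gives a homotopy (with endpoints sliding along $\overline{q\hat q}$ and $\overline{r\hat r}$, so not ``rel endpoints'' as written) from $\overline{qr}$ to $\pi_p(\overline{qr})$ inside $X-p$; that tells you nothing yet about which homotopy class in $\pi_1(\lk(p),\hat q,\hat r)$ the projected path lands in. You then write ``so it suffices to show $\pi_p(\overline{qr})$ and the geodesic $\sigma$ are homotopic rel endpoints in $\lk(p)$'' --- but this is precisely the statement to be proved, and the final sentence, invoking $\measuredangle_p(\hat q,\hat r)<\pi<\girth(\lk(p))$, only identifies $\sigma$ as the reduced representative \emph{of its own class}; it does not rule out that $\pi_p(\overline{qr})$ reduces to a different, longer arc from $\hat q$ to $\hat r$ (e.g.\ the other arc of a length-$2\pi$ embedded cycle through $\hat q$ and $\hat r$).

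The paper closes exactly this gap with a two-parameter construction you don't have: let $\alpha_q(t)$ slide from $q$ to $\hat q$ along $\overline{qp}$, let $\alpha_r(t)$ slide from $r$ to $\hat r$, and set $G(s,t)=\gamma_t(s)$ where $\gamma_t=\overline{\alpha_q(t)\alpha_r(t)}$. One checks $G$ lands in $X-p$, so $\pi_p\circ G$ is a homotopy in $\lk(p)$ from $\pi_p\circ\gamma_0=\pi_p(\overline{qr})$ to $\pi_p\circ\gamma_1$, relative to $\{\hat q,\hat r\}$ (the endpoints of $\pi_p\circ\gamma_t$ are $\pi_p(\alpha_q(t))=\hat q$ and $\pi_p(\alpha_r(t))=\hat r$ for all $t$). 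The decisive observation is then that $\gamma_1=\overline{\hat q\hat r}$ lies inside the cone over $\sigma$ (which is convex, since the ball is isometric to $C(\lk(p))$), so $\pi_p\circ\gamma_1=\sigma$. That is what pins down the homotopy class. Your proposal flags the right ingredients but is missing this interpolation, and without it the last paragraph is circular.
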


\begin{proof}
Let $S$ be as in \cref{path in link} and identify $\lk(p)$ with $S$.
Let $\alpha_q:[0,1]\to X$ be a parametrisation of $\overline{q\hat{q}}$ where $\hat{q}\in S=\lk(p)$, such that $\alpha_q(0)=q$ and $\alpha_q(1)=\hat{q}$. Similarly consider $\alpha_r$. Let $\gamma_t:[0,1]\to X$ be a parametrisation of $\overline{\alpha_q(t)\alpha_r(t)}$. Let $G:[0,1]\times[0,1]\to X-p$ be $G(s,t)=\gamma_t(s)$. 

We have that $G$ is continuous and well defined into $X-p$, so $\pi_p\circ G$ gives a homotopy between $\pi_p\circ \gamma_0$ and $\pi_p\circ\gamma_1$.

As in \cref{path in link}, the ball bounded by $S$ is isometric to the cone over $S$. The cone over $\overline{\hat{q}\hat{r}}$ is convex and contains $\gamma_1$, so $\pi_p\circ \gamma_1=\overline{\hat{q}\hat{r}}$. We get that $\pi_p\circ G$ is a homotopy relative to $\set{\hat{q},\hat{r}}$ between $\pi_p(\overline{qr})$ and $\overline{\hat{q}\hat{r}}=\overline{\pi_p(q)\pi_p(r)}$.

\end{proof}

As an immediate corollary, we get the following. 

\begin{corollary} \label{homotopies in link}
Let $X$ be a CAT(0) polygonal complex, let $\Delta$ be a geodesic triangle with vertices $x,y,z\in X$, and let $p\in X$. 

\begin{itemize}
    \item If $p\in X-\Delta$, then $\pi_p(\Delta)$ is homotopic to the geodesic triangle in $\lk(p)$ with vertices $\hat{x},\hat{y},\hat{z}$.
    
    \item If $p\in \overline{xy}-(\overline{xz}\cup\overline{yz})$ then $\pi_p(\Delta-p)$ is homotopic to $\overline{\hat{x}\hat{z}}\cup\overline{\hat{y}\hat{z}}$.
    
    \item If $p\in\overline{xy}\cap\overline{xz} - \overline{yz}$ then $\pi_p(\Delta-p)$ is homotopic to $\set{\hat{x}}\cup \overline{\hat{y}\hat{z}}$ if $p\neq x$ and to $\overline{\hat{y}\hat{z}}$ if $p=x$.
    
    \item If $p\in \overline{xy}\cap \overline{xz}\cap \overline{yz}$ then $\pi_p(\Delta-p)=\set{\hat{u}\mid u\in\set{x,y,z}-p}$.
\end{itemize}
where all homotopies are relative to $\set{\hat{x},\hat{y},\hat{z}}$ when they are defined.
See \cref{fig:link of p} for an illustration of the four cases.
\end{corollary}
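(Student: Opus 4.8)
The plan is to derive everything from \cref{projection to link}, applied side-by-side to the three edges of $\Delta$ and glued along the vertices, after recording one elementary fact about how $\pi_p$ behaves on an edge that passes through $p$.

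Write $\Delta=\overline{xy}\cup\overline{yz}\cup\overline{xz}$ (geodesics in a CAT(0) space are unique, so this is well defined). Two inputs are needed. First, \cref{projection to link} itself: if $p\notin\overline{uv}$ then $\pi_p(\overline{uv})$ is homotopic rel $\{\hat u,\hat v\}$ to $\overline{\hat u\hat v}$. Second, an observation about an edge through $p$: if $p\in\overline{uv}$ then $\overline{uv}=\overline{up}\cup\overline{pv}$, and $\pi_p$ is constantly $\hat u$ on $\overline{up}\setminus\{p\}$ (and constantly $\hat v$ on $\overline{pv}\setminus\{p\}$), these subsegments being empty precisely when $p=u$ (resp.\ $p=v$). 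Indeed, for $w\in\overline{up}\setminus\{p\}$ the geodesic $\overline{pw}$ is a subsegment of $\overline{pu}$, hence has the same direction at $p$, and $\pi_p$ records only that direction.

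Granting these, the four cases are bookkeeping: parametrize $\Delta$ (or, in Cases 2--4, $\Delta\setminus\{p\}$) as a $1$-complex and post-compose with $\pi_p$. In \textbf{Case 1} all three edges miss $p$; concatenating the three homotopies from \cref{projection to link} around $\Delta$ — they agree on the vertices since each is rel endpoints — exhibits $\pi_p(\Delta)$ as homotopic to the loop $\overline{\hat x\hat y}\cup\overline{\hat y\hat z}\cup\overline{\hat z\hat x}$, the geodesic triangle in $\lk(p)$. In \textbf{Case 2}, $p$ lies in the interior of $\overline{xy}$, so in particular $p\neq x,y$; the edges $\overline{xz},\overline{yz}$ miss $p$ and contribute maps homotopic to $\overline{\hat x\hat z}$ and $\overline{\hat y\hat z}$, while $\overline{xy}\setminus\{p\}$ contributes two arcs on which $\pi_p$ is constantly $\hat x$ and $\hat y$; gluing along the vertices of $\Delta\setminus\{p\}$ and absorbing the constant arcs gives the model with image $\overline{\hat x\hat z}\cup\overline{\hat y\hat z}$. \textbf{Cases 3 and 4} are the same in spirit: uniqueness of geodesics shows that $p\in\overline{xy}\cap\overline{xz}$ makes $\overline{xp}$ a common subsegment of those two edges (so $\Delta\setminus\{p\}$ has the stated shape, degenerating further when $p$ is a vertex), and that $p$ on all three edges makes $\Delta=\overline{xp}\cup\overline{yp}\cup\overline{zp}$; the edges off $p$ are handled by \cref{projection to link}, while the edges (or subsegments) through $p$ collapse under $\pi_p$ to the points $\hat u$ with $u\in\{x,y,z\}\setminus\{p\}$, which matches the stated models — and when every edge passes through $p$ there is nothing left to homotope, which is exactly why Case 4 is an equality.

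I do not expect a genuine obstacle — this is why it is billed as an immediate corollary — but two points deserve care. One is the gluing: in each case one must check that the homotopy pieces (those coming from \cref{projection to link} on edges off $p$, and the constant homotopies on subsegments through $p$) agree on the shared vertices of $\Delta$, so that they assemble into a single continuous homotopy of $\Delta$ (resp.\ of $\Delta\setminus\{p\}$) rel $\{\hat x,\hat y,\hat z\}$. The other is to separate out the degenerate sub-cases in which $p$ equals one of $x,y,z$, or in which $\Delta$ is not an embedded circle; these only shrink $\Delta\setminus\{p\}$ and make the verification easier, but they should be stated explicitly so that the "$\{\hat u\mid u\in\{x,y,z\}-p\}$" type conclusions are seen to hold on the nose.
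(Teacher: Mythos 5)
Your proof is correct and is essentially the argument the paper has in mind: the paper states no proof, labeling the result "an immediate corollary" of \cref{projection to link}. Applying \cref{projection to link} edgewise to the sides missing $p$, observing that $\pi_p$ is constant on a geodesic subsegment issuing from $p$, and gluing the homotopies along the shared vertices (including the degenerate subcases you flag) is exactly the intended bookkeeping.
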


\begin{figure}[H]
    \centering
    \includegraphics[]{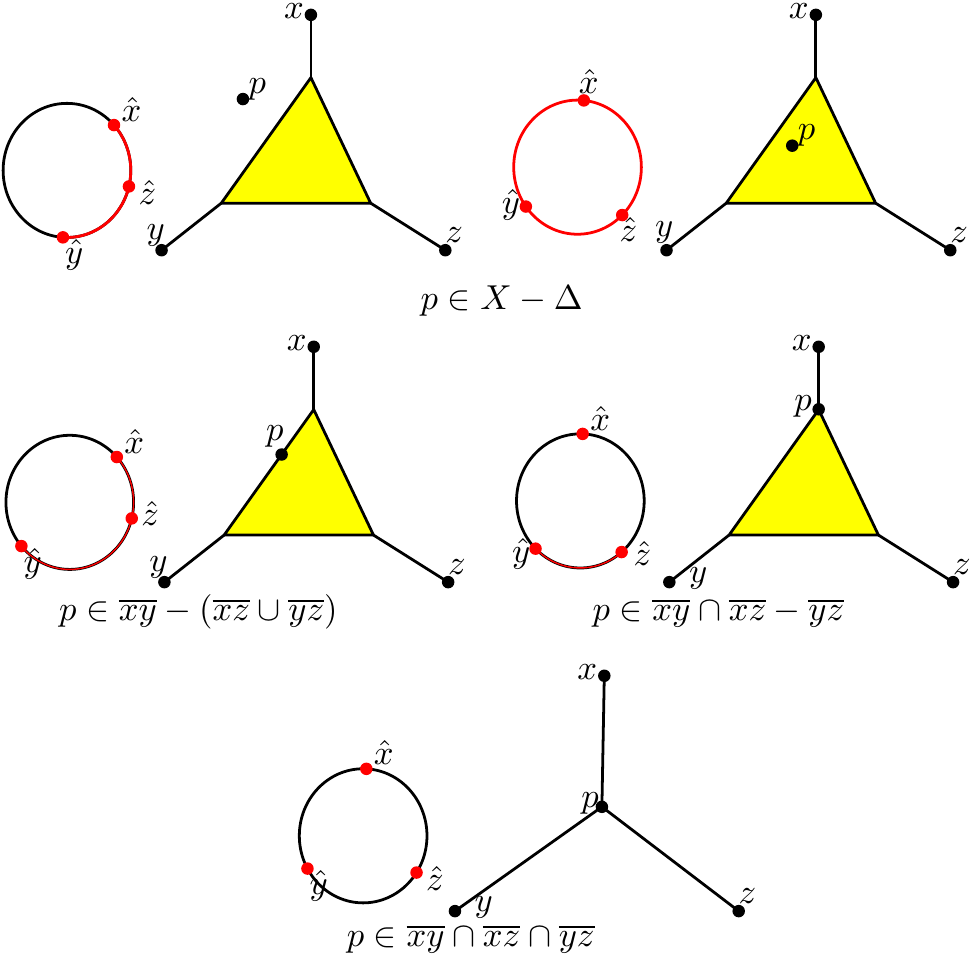}
    \caption{The four cases, $\lk(p)$ on the left of the configuration}
    \label{fig:link of p}
    \end{figure}

\section{Some planar complexes}
\label{sec: planar complexes}

In what follows we will need to discuss complexes which are similar to discs and annuli. We will record in this section the technicalities of the definitions.

\begin{definition} \label{generalized disc and triangle}
\begin{itemize}
    \item A \emph{generalized disc} $D$ is a contractible finite $\Delta$-complex (see \cite{Hatcher} Chapter 2 for definition of $\Delta$-complex) with a piecewise linear embedding to $\bbR^2$, see \cref{fig:generalized disc}. A \emph{boundary cycle} is a
    map $\alpha: C \to D^{(1)}$ such that $\alpha(C)$ is contained in the topological boundary, contains all 1-cells that are contained in the topological boundary and does not cross itself, that is, if $e_i,e_{i+1}$ are consecutive 1-cells in $\alpha$ then $e_i^{-1},e_{i+1}$ are consecutive in the cyclic order coming from the planar embedding of all 1-cells emanating from $t(e_i)$, and where $C$ is cycle graph which is of minimal length among all such graphs for which there exists such a map. The boundary cycle can be thought of as the boundary of an epsilon-neighbourhood of the embedded generalized disk.
    
    As complexes, they are just a connected and simply connected union of disks and trees. But it is important to note that generalized disks come with their embedding into $\mathbb{R}^2$.
    
    Any two parametrized boundary cycles of $D$ differ by precomposition by an automorphism of $C$. By abuse of language, we call $\alpha$ \emph{the} boundary cycle of $D$.
    
    \item A \emph{generalized triangle} is a generalized disc $D$ with boundary cycle $\alpha: C\to D$ together with 3 vertices $x,y,z\in C$ on its boundary. 
    We denote $\overline{xy}^{D} = \alpha |_{\overline{xy}}$ where $\overline{xy}$ is the path in $C$ connecting $x$ and $y$ and not passing through $z$.
    The images of $x,y,z$ and the arcs $\overline{xy},\overline{xz},\overline{yz}$ under $\alpha$ are well-defined and by abuse of notation we regard the points $x,y,z$ as the corresponding points in $D$.
    
    \item Given a generalized disc $D$, points in $D$ that are the image of two points in the boundary cycle of $D$ are referred to as \emph{skinny parts of $D$}. 
\end{itemize}
\end{definition}

\begin{figure}[H]
    \centering
    \includegraphics[]{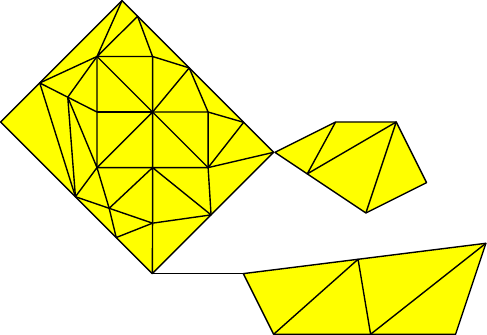}
    \caption{Example of generalized disc}
    \label{fig:generalized disc}
    \end{figure}

\begin{definition} \label{generalized annulus}
A \emph{generalized annulus} is a finite $\Delta$-complex with a piecewise linear embedding to $\mathbb{R}^2$ which is homotopy equivalent to $\mathbb{S}^1$ and has no separating points. For example see \cref{fig:generalized annulus}.

Given a generalized annulus $A$, it separates $\mathbb{R}^2$ into a bounded and an unbounded component where the bounded component is a generalized disc $D$. Let the \emph{inner boundary} of $A$ be the boundary cycle of $D$ and similarly $A\cup D$ is a generalized disc and we define the \emph{outer boundary} of $A$ to be the boundary cycle of $A\cup D$.
\end{definition}

Note that the inner boundary of a generalized annulus is a circle, as it has no separating points.

\begin{definition} \label{good gluing}
Let $A$ be a generalized annulus and let $\alpha:C\to A^{(1)}$ be the inner boundary of $A$ where $C$ is a cycle graph.
The equivalence relation $\sim$ on $\alpha(C)$ is a \emph{good gluing} if the relation $\sim$ identifies $\alpha(e_{i})$ with $\alpha(j_i)$ where $e_1,\dots,e_k,j_k^{-1},...,j_1^{-1}$ are the edges of $C$ in this order, and at most one of $\inner(e_i),\inner(j_i)$ is on the outer boundary of $A$. For example see \cref{fig:generalized annulus}.
\end{definition}

\begin{figure}[H]
    \centering
    \includegraphics[]{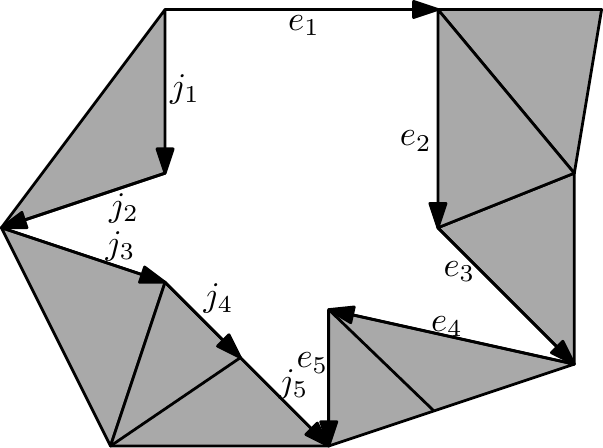}
    \caption{generalized annulus with good gluing}
    \label{fig:generalized annulus}
    \end{figure}

\begin{claim} \label{gluing an annulus}
If $A$ is a generalized annulus and $\sim$ is a good gluing of $A$, then $A/\sim$ is homeomorphic to a disc.
\end{claim}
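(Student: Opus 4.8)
The plan is to reduce to the case where $A$ is a topological annulus and then build $A/\!\sim$ by cutting $A$ open and regluing.

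First I would record the shape of a good gluing. Write the inner boundary cycle as $\alpha\colon C\to A^{(1)}$, where $C$ is a cycle graph with edges $e_1,\dots,e_k,j_k^{-1},\dots,j_1^{-1}$ in this cyclic order. Since $j_1^{-1}$ is followed by $e_1$ and $e_k$ is followed by $j_k^{-1}$, the edges $\alpha(e_1),\alpha(j_1)$ already share their initial vertex $P_0$, and $\alpha(e_k),\alpha(j_k)$ share their terminal vertex $P_1$. The relation $\sim$ glues $\alpha(e_i)$ to $\alpha(j_i)$ (initial vertex to initial vertex, terminal to terminal), so it folds $C$ about $P_0$ and $P_1$: the arc $C_+$ of $C$ from $P_0$ to $P_1$ through $e_1,\dots,e_k$ is identified with the complementary arc $C_-$ by a homeomorphism $g\colon C_+\to C_-$ carrying $e_i$ to $j_i$ and fixing $P_0,P_1$, and $C/\!\sim$ is an arc.

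Suppose first that $A$ is a topological closed annulus; I expect this is automatic when $A$ is two-dimensional, since a compact planar surface with $\pi_1\cong\bbZ$ is an annulus. Then its inner boundary is the embedded circle $C$ (the remark after \cref{generalized annulus}) and its outer boundary is an embedded circle. Choose two disjoint topological arcs $\beta_0$ from $P_0$ and $\beta_1$ from $P_1$, each meeting $C$ only at that endpoint, ending on the outer boundary, and otherwise contained in the interior of $A$. Cutting $A$ along $\beta_0$ yields a disc, and cutting that along $\beta_1$ yields two closed discs $R,R'$, with $C_+\subset\partial R$ and $C_-\subset\partial R'$, and each of $\beta_0,\beta_1$ occurring once in $\partial R$ and once in $\partial R'$. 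In $A$ the discs $R,R'$ are glued back along $\beta_0$ and along $\beta_1$; in $A/\!\sim$ they are glued in addition along $C_+\leftrightarrow C_-$ via $g$. Along $\partial R$ the arcs $\beta_0,C_+,\beta_1$ occur consecutively, forming a closed sub-arc whose complement is the outer arc, and likewise $\beta_0,C_-,\beta_1$ along $\partial R'$; moreover the three identifications $\beta_0=\beta_0$, $\beta_1=\beta_1$, $g\colon C_+\to C_-$ agree at $P_0$ and at $P_1$ and are homeomorphisms. Hence $A/\!\sim$ is obtained by gluing the two discs $R,R'$ along a closed arc in the boundary of each by a homeomorphism, and such a space is a disc. (Alternatively, one checks directly that every point of $A/\!\sim$ has a neighbourhood homeomorphic to $\bbR^2$ or to a half-plane, that its topological boundary is a single circle, and that $A/\!\sim$ deformation retracts onto $R$ and is therefore contractible; the classification of compact surfaces then gives $A/\!\sim\;\cong\bbD^2$.)

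The real obstacle is that a generalized annulus need not be a topological surface: it may be a ``necklace'' of polygons joined along single edges or vertices, and then an edge of the inner boundary may also lie on the outer boundary — precisely the configuration the hypothesis of \cref{good gluing} rules out for both members of a pair. (For example, a single embedded circle is a generalized annulus in which every edge lies on the outer boundary, so it admits no non-trivial good gluing; consistently, folding a circle in half yields an arc, not a disc.) To obtain the statement in general I would induct on $k$, ``unzipping'' the innermost pair at each step: set $A':=A/(\alpha(e_k)\sim\alpha(j_k))$ — a legitimate fold, since $\alpha(e_k),\alpha(j_k)$ are distinct edges of $A$ meeting only at $P_1$ — and check that $A'$ is again a generalized annulus, that $(e_i\sim j_i)_{i<k}$ restricts to a good gluing of $A'$ (the ``at most one on the outer boundary'' clause being what keeps the outer boundary of $A'$ under control), and that $A/\!\sim\;\cong A'/\!\sim'$. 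The base case, that collapsing a single good pair turns $A$ into a disc, follows from the cut-and-reglue argument above with ``disc'' read as ``generalized disc'' in the degenerate situations; the delicate point there is to verify that the fold creates no separating point and leaves the result simply connected, rather than the topological content of the lemma.
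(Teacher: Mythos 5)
Your approach is genuinely different from the paper's, and it is instructive to compare them. The paper does not try to reduce to the topological-annulus case at all: it checks directly, via the links of vertices in the quotient (each must be a circle or an arc, using the no-separating-point hypothesis and the ``at most one on the outer boundary'' clause), that $A/\!\sim$ is a compact surface with a single boundary circle, then computes $\chi(A/\!\sim)=\chi(A)+1=1$ from the cell counts (the quotient has the same number of $2$-cells, $k$ fewer $1$-cells, and $k-1$ fewer $0$-cells), and concludes from the classification of compact surfaces. That argument sidesteps exactly the ``necklace'' difficulties you flag, because it never needs $A$ itself to be a surface. Your cut-and-reglue argument in the case where $A$ is a topological annulus is correct and clean, and it is a nice observation that the good-gluing constraint forces the fold to be a folding of $C$ about exactly two antipodal-in-the-cycle points $P_0,P_1$; but this case alone does not prove the claim.

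Where your proposal has a real gap is the general (non-surface) case. The inductive scheme of unzipping one pair at a time is plausible, but the hard content of the lemma is concentrated precisely in the steps you defer: (i) that $A':=A/(\alpha(e_k)\!\sim\!\alpha(j_k))$ still has a piecewise-linear planar embedding; (ii) that $A'$ has no separating point (this is not automatic -- folding two inner-boundary edges can \emph{a priori} create a pinch, and it is the ``at most one on the outer boundary'' clause that has to be invoked to rule it out); (iii) that the residual relation is a good gluing of $A'$, which requires identifying the inner boundary cycle of $A'$ and checking the outer-boundary constraint persists; and (iv) that possibly $\alpha(e_k)=\alpha(j_k)$ as cells of $A$, in which case the ``fold'' is trivial and the step needs a separate remark. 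None of these is addressed. Moreover, in the base case you propose to read ``disc'' as ``generalized disc'' in degenerate situations, but a generalized disc is allowed to have spikes and is not in general homeomorphic to $\bbD^2$, while the claim asserts a genuine homeomorphism to a disc -- so even after the induction the conclusion would be weaker than the statement. Finally, a small wording slip: the intermediate quotients $A'$ should remain homotopy equivalent to $\bbS^1$ (they are still generalized annuli), not simply connected; only the final quotient $A/\!\sim$ is contractible.
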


\begin{proof}
Assume $A$ is a generalized annulus and $\sim$ is a good gluing of $A$. The quotient $A/\sim$ is a surface: indeed, consider the links of vertices in $A$ - they are subsets of the circle that have at most two components (as otherwise either the vertex would be a cut vertex or $A$ would not be homotopy equivalent to $\bbS^1$), the good gluing ensures that they glue up to have one component which is either a circle or a non-trivial arc. 
In fact, we see that the image of the outer boundary of $A$ is the only boundary component of the surface $A/\sim$.

The generalized annulus $A$ is homotopic to a circle and so $\chi(A)=0$. 
The CW complex $A/\sim$ has the same number of 2-cells as $A$, $k$ fewer 1-cells, and $k-1$ fewer 0-cells where $k$ is as in \cref{good gluing}. It follows that $\chi(A/\sim)=\chi(A)+1=1$.
Since the compact surface $A/\sim$ has one boundary component and $\chi(A/\sim)=1$ it must be a disc.
\end{proof}

\section{Triangles in CAT(0) polygonal complexes}
\label{sec: triangles in CAT0}
Throughout let $X$ be a CAT(0) polygonal complex. E.g. a rank two affine building. Note that a polygonal complex can be subdivided to be a triangle complex, so without loss of generality, we assume that $X$ is a CAT(0) triangle complex, i.e. each 2-cell is a Euclidean triangle, which is not assumed to be regular.

\begin{definition} \label{triangle}
Let $x,y,z\in X$.
\begin{enumerate}
    \item Denote by $\triangle(x,y,z)=\overline{xy}\cup\overline{yz}\cup \overline{zx}$ the \emph{geodesic triangle} spanned by $x,y,z$.
    \item Define the \emph{full triangle} $\bt{x,y,z}$ to be the union of $\triangle(x,y,z)$ and the set of points $p$ in $X-\triangle(x,y,z)$ for which $\triangle(x,y,z)$ is not trivial in $\pi_1(X-p)$. 
\end{enumerate}
\end{definition}

\begin{example}\label{convex hulls example}
In \cref{fig:proj not geod} we see $\triangle(p,q,r)$ in green in the space $T\times[0,1]$ and its corresponding full triangle in yellow.
Note that $\bt{p,q,r}$ is not convex - the geodesic between $s$ and $p$ does not lie inside of it. 

Our uniqueness result would be false if we had considered the convex hull of the three points to be the full triangle, as one could see from \cref{fig: non-uniqueness convex hull}: the purple line is not in the intersection of the four triangles, but is in the intersection of the four convex hulls, as it is not in $\bt{x,y,w}$, but is in $\conv(x,y,w)$, and it is in $\bt{x,z,w}=\conv(x,z,w)$ and in $\overline{zy}\subseteq \bt{x,y,z}\cap\bt{y,z,w}$. So the intersection of the four convex hulls is not a single point.

\end{example}

\begin{figure}[htp]
    \centering
    \includegraphics[]{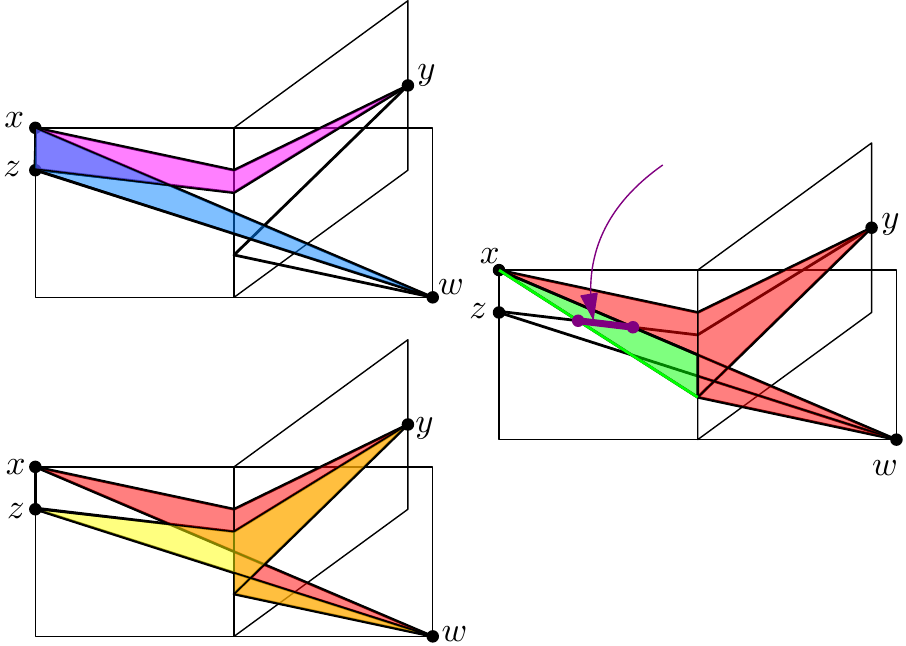}
    \caption{Intersection of the convex hulls is not a single point. Top left: $\bt{x,y,z}=\conv(x,y,z)$ and $\bt{x,z,w}=\conv(x,z,w)$; bottom left: $\bt{x,y,w},\bt{z,y,w}$; right: $\conv(x,y,w)$}
    \label{fig: non-uniqueness convex hull}
\end{figure}

In \cref{fig:proj not geod} the full triangle is not a disc, but rather a disc with a ``spike'' attached to it. This leads us to the following definition.

\begin{definition}
A \emph{spiky triangle (resp. full spiky triangle)} is the topological space obtained from gluing a possibly degenerate interval to each of the vertices of a possibly degenerate Euclidean triangle (resp. full Euclidean triangle), as in \cref{fig:spiky triangle}.

\begin{figure}[htp]
    \centering
    \includegraphics[]{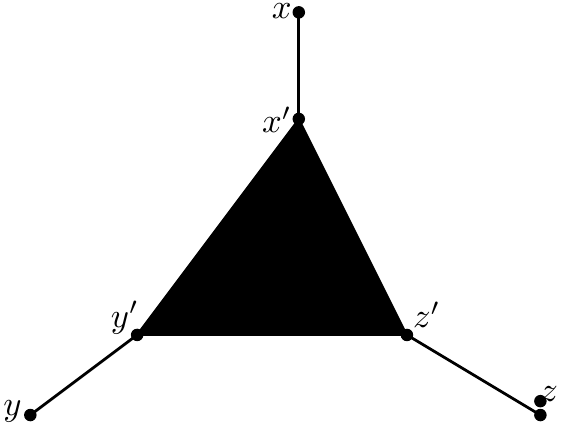}
    \caption{A full spiky triangle}
    \label{fig:spiky triangle}
\end{figure}
\end{definition} 

\begin{lemma}\label{geodesic triangle is spiky}
Let $x,y,z\in X$. Then the geodesic triangle $\triangle(x,y,z)$ is a spiky triangle.
\end{lemma}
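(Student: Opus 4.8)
\emph{Overall approach.} The plan is to analyse the geodesic triangle $\triangle(x,y,z)=\overline{xy}\cup\overline{yz}\cup\overline{zx}$ (see \cref{triangle}) entirely through the pairwise intersections of its three sides, using only that $X$, being CAT(0), is uniquely geodesic (its geodesics are isometric embeddings, hence injective). The first step is the observation that two geodesics issuing from a common point $w$ agree along a (possibly trivial) initial subsegment and are disjoint beyond it: if $\overline{wu}$ and $\overline{wv}$ meet at some $q\neq w$, then the subarc of each from $w$ to $q$ is a geodesic from $w$ to $q$, hence equals $\overline{wq}$ by uniqueness, so the two subarcs coincide; parametrising both geodesics by arclength from $w$, they therefore agree exactly on a closed initial segment of parameters. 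Applying this at the vertices $x$, $y$, $z$ I obtain branch points $p_x,p_y,p_z$ with $\overline{xy}\cap\overline{xz}=\overline{xp_x}$, $\overline{xy}\cap\overline{yz}=\overline{yp_y}$ and $\overline{xz}\cap\overline{yz}=\overline{zp_z}$. Note $p_x$ and $p_y$ both lie on $\overline{xy}$, so $\overline{xy}$ is covered by the two ``spikes'' $\overline{xp_x}$, $\overline{yp_y}$ together with whatever subarc lies between $p_x$ and $p_y$; the argument will be organised by the order of $p_x$ and $p_y$ along $\overline{xy}$.

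\emph{Key step: ordering the branch points.} I will show $d(x,p_x)\leq d(x,p_y)$, i.e.\ $p_x$ lies on the subarc of $\overline{xy}$ from $x$ to $p_y$ (and, by symmetry, the analogous statements on the other two sides). First I record that any point $q$ lying on all three sides $\overline{xy},\overline{xz},\overline{yz}$ satisfies $d(x,q)=(d(x,y)+d(x,z)-d(y,z))/2$ --- this follows by combining the three betweenness identities $d(x,q)+d(q,y)=d(x,y)$, $d(x,q)+d(q,z)=d(x,z)$, $d(y,q)+d(q,z)=d(y,z)$ --- so, $\overline{xy}$ being injective, $\overline{xy}\cap\overline{xz}\cap\overline{yz}$ has at most one point. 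Now if $d(x,p_x)>d(x,p_y)$, then $p_y\in\overline{xp_x}\subseteq\overline{xz}$ and $p_x\in\overline{yp_y}\subseteq\overline{yz}$, so $p_x$ and $p_y$ are two distinct points lying on all three sides, which is impossible. This is the only genuinely metric step in the proof.

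\emph{Dichotomy and assembly.} With the ordering in hand, set $T=\overline{xy}\cap\overline{xz}\cap\overline{yz}$. Since $T=\overline{xp_x}\cap\overline{yp_y}$ is an intersection of two subarcs of $\overline{xy}$ occurring in the cyclic order $x,p_x,p_y,y$, it is empty when $p_x\neq p_y$ and equal to $\{p_x\}$ when $p_x=p_y$; computing $T$ from the other two sides shows the three statements ``$p_x=p_y$'', ``$p_y=p_z$'', ``$p_x=p_z$'' are equivalent, so either all three branch points coincide or all three are distinct. If $p_x=p_y=p_z=:p$, then each side splits as $\overline{xy}=\overline{xp}\cup\overline{py}$ and cyclically, whence $\triangle(x,y,z)=\overline{xp}\cup\overline{yp}\cup\overline{zp}$, three (possibly degenerate) segments meeting pairwise only in $T=\{p\}$: a spiky triangle with degenerate core $p$. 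If $p_x,p_y,p_z$ are distinct, the ordering step lets me split each side into three consecutive subarcs, e.g.\ $\overline{xy}=\overline{xp_x}\cup\overline{p_xp_y}\cup\overline{p_yy}$, so that $\triangle(x,y,z)=S_x\cup S_y\cup S_z\cup K$ with spikes $S_x=\overline{xp_x}$, $S_y=\overline{yp_y}$, $S_z=\overline{zp_z}$ and core $K=\overline{p_xp_y}\cup\overline{p_yp_z}\cup\overline{p_zp_x}=\triangle(p_x,p_y,p_z)$. The remaining checks are routine bookkeeping of subarc intersections: consecutive subarcs of a geodesic meet only in their common endpoint, while any intersection of a spike with a non-adjacent piece is contained in $T=\emptyset$. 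This yields that the three edges of $K$ meet pairwise exactly in $p_x,p_y,p_z$ (so $K$ is a Jordan triangle), that $S_x,S_y,S_z$ are pairwise disjoint, and that $S_w\cap K=\{p_w\}$; hence $\triangle(x,y,z)$ is $K$ with the intervals $S_x,S_y,S_z$ attached at its three vertices, which is exactly a spiky triangle.

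\emph{Degenerate cases and the main obstacle.} Degenerate configurations are absorbed automatically: if $x=y$ then $\triangle(x,y,z)=\overline{xz}$, an interval, which is a (very degenerate) spiky triangle, and a spike may legitimately be a single point when $p_w$ equals the corresponding vertex. I expect the only real obstacle to be the ordering step, which is what rules out pathological ``folded'' configurations in which all three sides would share a common non-degenerate subarc in an incompatible order; once it is established, the rest of the argument uses nothing beyond injectivity of geodesics and the at-most-one-point statement about the triple intersection.
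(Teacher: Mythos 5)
Your proof is correct and takes essentially the same route as the paper: your branch points $p_x,p_y,p_z$ are the paper's $x',y',z'$, and the decomposition into the core $\triangle(p_x,p_y,p_z)$ plus three spikes attached at those points is the same. Your ordering lemma (that $p_x$ precedes $p_y$ on $\overline{xy}$, derived from the at-most-one-point observation for the triple intersection $\overline{xy}\cap\overline{xz}\cap\overline{yz}$) supplies the verification that the paper dismisses with ``it is easy to see.''
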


\begin{proof}
The intersection of any two geodesics in a CAT(0) space is either empty or a (possibly degenerate) segment. Consider $\overline{xy}\cap \overline{xz}$, this is a (possibly degenerate) segment, as it is non-empty. Let $x'$ be the point furthest away from $x$ on that interval and 
define $y',z'$ similarly. We get $\triangle(x,y,z)=\triangle(x',y',z')\cup \overline{xx'}\cup\overline{yy'}\cup \overline{zz'}$, as in \cref{fig:spiky triangle}.

It is easy to see that unless $\triangle(x,y,z)$ is a tripod,  $\overline{xx'}\cap \overline{yz}=\emptyset, \overline{yy'}\cap \overline{xz}=\emptyset, \overline{zz'}\cap \overline{xy}=\emptyset$. It follows that $\triangle(x,y,z)$ is a spiky triangle.
\end{proof}

A similar phenomenon occurs in the link of a point in $X$. That is

\begin{lemma} \label{triangles are spiky in link}
Let $G$ be a metric graph with girth at least $2\pi$ and let $x,y,z\in X$ be such that $d(x,y),d(y,z),d(x,z)<\pi$, then the geodesic triangle $\triangle(x,y,z)$ (which is well defined by assumption on the distances between the points and the girth) is a spiky triangle.
\end{lemma}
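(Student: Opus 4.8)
The plan is to mirror the proof of \cref{geodesic triangle is spiky}, replacing the CAT(0) input ``the intersection of any two geodesics is empty or a (possibly degenerate) segment'' by its analogue for a metric graph of girth at least $2\pi$.

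The new ingredient I would establish first is that in a metric graph $G$ of girth at least $2\pi$, any two points at distance $<\pi$ are joined by a \emph{unique} geodesic. Indeed, if $\alpha\neq\beta$ were two geodesics from $a$ to $b$ with $d(a,b)<\pi$, then the subgraph $\alpha\cup\beta$ contains two distinct arcs from $a$ to $b$, hence is not a tree, hence contains an embedded cycle $\gamma$. Every edge of $\gamma$ lies on $\alpha$ or on $\beta$, so the length of $\gamma$ is at most the sum of the lengths of $\alpha$ and $\beta$, namely $2\,d(a,b)<2\pi$, contradicting the girth bound. (In particular this is exactly what makes the geodesic triangle $\triangle(x,y,z)$ in the statement well defined.) From uniqueness one deduces, just as in a CAT(0) space, that the intersection of any two geodesics of length $<\pi$ in $G$ is empty or a single (possibly degenerate) segment: if $p,q$ both lie in the intersection, then the subarc of one geodesic from $p$ to $q$ and the subarc of the other from $p$ to $q$ are both geodesics of length $<\pi$, hence coincide; thus the intersection contains the segment $\overline{pq}$ and is therefore a connected subset of each of the two arcs, i.e.\ a subsegment of each.

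With this at hand the argument becomes a transcription of \cref{geodesic triangle is spiky}. The three sides $\overline{xy},\overline{yz},\overline{xz}$ have length $<\pi$, so $\overline{xy}\cap\overline{xz}$ is a nonempty segment containing $x$; let $x'$ be its endpoint farthest from $x$, and define $y',z'$ symmetrically. Then $\triangle(x,y,z)=\triangle(x',y',z')\cup\overline{xx'}\cup\overline{yy'}\cup\overline{zz'}$. By uniqueness of short geodesics, $\overline{x'y'}$ is the subarc of $\overline{xy}$ between $x'$ and $y'$, and likewise for the other truncated sides; hence a point of $\overline{x'y'}\cap\overline{x'z'}$ other than $x'$ would lie on $\overline{xy}\cap\overline{xz}$ strictly farther from $x$ than $x'$, contradicting the choice of $x'$, and similarly for the other two pairs. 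So the sides of $\triangle(x',y',z')$ pairwise meet only in their shared endpoints, i.e.\ $\triangle(x',y',z')$ is an embedded (possibly degenerate) Euclidean triangle, and unless $\triangle(x,y,z)$ is degenerate the spikes $\overline{xx'},\overline{yy'},\overline{zz'}$ meet the rest of the figure only at $x',y',z'$; this exhibits $\triangle(x,y,z)$ as a spiky triangle. The remaining degenerate configurations (a tripod, a segment, or a single point) are directly seen to be degenerate spiky triangles.

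The only genuinely new content compared with \cref{geodesic triangle is spiky} is the uniqueness of short geodesics together with the resulting segment structure of intersections; I expect the ``embedded cycle inside the union of two arcs'' step to be the place that requires the most care to phrase cleanly, while everything after it is essentially a copy of the CAT(0) argument.
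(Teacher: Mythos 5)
Your proposal is correct and takes the same approach as the paper: the paper's proof is exactly the one-line remark that one should rerun the argument of \cref{geodesic triangle is spiky} with the CAT(0) fact ``two geodesics meet in a segment'' replaced by ``two geodesics of length $<\pi$ in $G$ meet in a segment, by the girth bound.'' You have simply supplied the justification of that replacement fact (uniqueness of short geodesics via an embedded-cycle/length count, then convexity of the intersection inside each geodesic), which the paper leaves to the reader.
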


The proof of this lemma is the same as the proof of \cref{geodesic triangle is spiky} where the sentence ``The intersection of any two geodesics in a CAT(0) space is either empty or a (possibly degenerate) segment'' is replaced with ``The intersection of any two geodesics of length $<\pi$ in $G$ is either empty or a (possibly degenerate) segment by assumption on the girth''.

We want to understand how $\bt{x,y,z}$ looks locally.
Consider $x,y,z\in X$ and $q\in \bt{x,y,z}$.
Let $B$ be a small enough ball around $q$. We want to describe $B\cap \bt{x,y,z}$.

We start by analyzing points $q$ in $\bt{x,y,z} - \triangle(x,y,z)$.
By \cref{path in link}, $q\in \bt{x,y,z}-\triangle(x,y,z)$ if and only if  $c=\pi_q(\triangle(x,y,z))$ is a homotopically non-trivial cycle in $\lk(q)$ or $q\in \triangle(x,y,z)$.
We claim that for a small enough ball $B$ around $q$ the points of $B\cap \bt{x,y,z}$ are the cone over the ``essential part'' $C$ of $c$ which is defined below:

\begin{definition}
Let $G$ be a graph. Given a cycle $c$ in $G$ which is not trivial in homotopy (resp. a path), the \emph{essential part} of $c$ is the unique closed non-backtracking cycle (resp. path) which is homotopic (resp. homotopic relative to end points) to $c$.
Given $c$ a disjoint union of cycles and paths in $G$, the \emph{essential part} of $c$ is the union of the essential parts of the connected components of $c$.
\end{definition}

\begin{claim} \label{triangle in link 1}
Let $q\in \bt{x,y,z}-\triangle(x,y,z)$ and let $B$ be a small enough ball around $q$ such that $\partial B\cong \lk(q)$ and $B\cap \triangle(x,y,z)=\emptyset$. Let $C$ be the essential part of $\pi_q(\triangle(x,y,z))$. Then for $p\in B$, $\hat{p}\in C$ if and only if $p\in \bt{x,y,z}$.
\end{claim}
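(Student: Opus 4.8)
The claim is local, so I would fix $q \in \bt{x,y,z} - \triangle(x,y,z)$ and work inside a small ball $B$ with $\partial B \cong \lk(q)$ and $B \cap \triangle(x,y,z) = \emptyset$. The key tool is the deformation retraction $\pi_q : B - q \to \partial B = \lk(q)$ from \cite[Proposition II.2.4]{BridHäf}, together with the characterization: $p \in \bt{x,y,z}$ iff $p \in \triangle(x,y,z)$ or $\triangle(x,y,z)$ is homotopically nontrivial in $\pi_1(X - p)$. Since $B \cap \triangle(x,y,z) = \emptyset$, for $p \in B$ we only need to decide when $\triangle(x,y,z)$ is nontrivial in $\pi_1(X-p)$. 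The plan is to compare this to the condition $\hat p \in C$, where $C$ is the essential part of $c := \pi_q(\triangle(x,y,z))$ (a disjoint union of cycles/paths in $\lk(q)$, by \cref{homotopies in link} applied to $q$ — note $q$ may lie on some of the geodesics $\overline{xy}$ etc. even though it is off $\triangle(x,y,z)$... actually $q \notin \triangle(x,y,z)$ so $c$ is homotopic to a geodesic triangle in $\lk(q)$ by the first bullet of \cref{homotopies in link}, hence essentially a single closed non-backtracking loop, possibly trivial — but since $q \in \bt{x,y,z}$, by \cref{path in link} it is nontrivial, so $C$ is a genuine embedded essential cycle).

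The core of the argument: I would show that for $p \in B$ small, the inclusion $X - B \hookrightarrow X - p$ is a $\pi_1$-isomorphism onto the relevant part, so that the class of $\triangle(x,y,z)$ in $\pi_1(X-p)$ is detected by how $\partial B$ (with one point $\hat p$ removed, i.e. $\lk(q) - \hat p$) links $c$. More precisely, $\triangle(x,y,z)$ is homotopic in $X - p$ to $c = \pi_q(\triangle(x,y,z)) \subseteq \partial B$, and $c$ is nullhomotopic in $X - p$ if and only if $c$ is nullhomotopic in $(X - B) \cup (\partial B) \cup (B - p)$. Now $B - p$ deformation retracts (via radial projection away from $p$, using that $B$ is a cone) onto a subset of $\partial B$; the upshot is that $c$ is nullhomotopic in $X - p$ iff $c$ can be homotoped off a neighborhood of $\hat p$ in $\partial B$ and then filled. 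Since $X$ is CAT(0) hence simply connected (\cref{link condition}), and the only obstruction to filling $c$ near $q$ is the puncture at $\hat p$: $c$ is homotopic (rel nothing) in $B - p$ to its essential part $C$ (the non-backtracking representative lying on the graph $\lk(q)$), and $C$ bounds a disc in $X$ that must pass through $q$ whenever $C$ is essential in $\lk(q)$; removing $p$ obstructs this disc precisely when $\hat p$ lies on $C$. Concretely: if $\hat p \notin C$, then $C$ lies in $\lk(q) - \hat p$, the cone on $C$ inside $B$ avoids $p$ (choosing $B$ in a "straight-line" cone structure so that the cone point is the only issue), so $C$ — and hence $c$ and hence $\triangle(x,y,z)$ — is nullhomotopic in $X - p$, giving $p \notin \bt{x,y,z}$. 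Conversely if $\hat p \in C$, then any filling disc of $c = \triangle(x,y,z)$ near $q$ meets $p$; one formalizes that $C$ (being an essential cycle in the link graph $\lk(q)$) is nontrivial in $\pi_1((\text{cone on } \lk(q)) - p)$ when $\hat p \in C$, hence $\triangle(x,y,z)$ is nontrivial in $\pi_1(X-p)$, giving $p \in \bt{x,y,z}$.

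For the "$\hat p \in C \Rightarrow p \in \bt{x,y,z}$" direction, the cleanest route is: suppose for contradiction $p \notin \bt{x,y,z}$, so $\triangle(x,y,z)$ bounds a disc $\sigma$ in $X - p$. Then $\pi_q \circ \sigma$ would give a nullhomotopy of $c$ in $\partial B - \hat p = \lk(q) - \hat p$ — wait, $\sigma$ need not avoid $q$. Better: since $p \notin \bt{x,y,z}$ and also $p \neq q$ with $q \in \bt{x,y,z}$, consider that $\bt{x,y,z}$ near $q$, by definition, is closed and contains $q$; I want to show it contains a whole cone-neighborhood of the direction $\hat p$. Use that $p \notin \bt{x,y,z}$ means there's a loop in $X - p$ homotopic to $\triangle(x,y,z)$ bounding a disc avoiding $p$; project everything by $\pi_q$ to reduce to the statement that $C$ is nullhomotopic in $\lk(q) - \hat p$, but $C$ is an embedded essential cycle in the graph $\lk(q)$, and removing an interior point $\hat p$ of one of its edges makes it non-closed-up... this contradicts $C$ being homotopic to a point there. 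I expect the main obstacle to be making this linking/cone argument fully rigorous — in particular controlling the $\pi_1$ comparison between $X - p$ and the local model $C(\lk(q)) - p$ without accidentally using points of $\triangle(x,y,z)$, and handling the possibility that the essential part $C$ touches $\hat p$ only at a vertex of $\lk(q)$ rather than an edge interior (where "passing through" is more delicate). I would isolate this into a lemma about punctured cones: for a finite graph $G$, a point $w \in C(G) - \{\text{cone pt}\}$ with image $\hat w \in G$, and an essential cycle $C \subseteq G$, the cycle $C$ is nontrivial in $\pi_1(C(G) - w)$ iff $\hat w \in C$ — and then transfer via the local isometry $B \cong C(\lk(q))$ and excision/van Kampen applied to $X = (X - B) \cup_{\partial B} B$.
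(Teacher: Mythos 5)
Your framework matches the paper's: you establish that $\triangle(x,y,z)$ is homotopic in $X-p$ to the essential cycle $C\subseteq\lk(q)$, and the easy direction ($\hat p\notin C\Rightarrow p\notin\bt{x,y,z}$) is identical — cone $C$ to $q$, observe the cone misses $p$, so $C$ bounds in $X-p$. The divergence is in the hard direction, $\hat p\in C\Rightarrow p\in\bt{x,y,z}$.

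Here you propose a van Kampen/excision argument built on a ``punctured cone lemma'': show $C$ is nontrivial in $\pi_1(C(\lk(q))-p)$ and transfer to $\pi_1(X-p)$. You flag the transfer as ``the main obstacle,'' and it genuinely is: van Kampen on $X-p = (X-\inner B)\cup_{\partial B}(B-p)$ does not by itself tell you that a nontrivial class in $\pi_1(B-p)$ remains nontrivial in the amalgam, since $\pi_1(\partial B)\to\pi_1(X-\inner B)$ need not be injective and the complement of a ball in a CAT(0) space can have nontrivial $\pi_1$. So as written the argument has a hole precisely where you predicted one.

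The paper sidesteps the local-to-global issue entirely. It uses that $\pi_p:X-p\to\lk(p)$ is a \emph{global} deformation retraction (this is the tool recalled in \S\ref{sec: prelims} from \cite[Prop.~II.2.4]{BridHäf}), so $\pi_1(X-p)\cong\pi_1(\lk(p))$, a free group on the cycles of a graph. It then observes that the cone $\hat C$ over $C$ is a \emph{convex} disc; if $p\in\hat C$, the link of $p$ inside $\hat C$ embeds into $\lk_X(p)$ as a cycle (resp.\ arc), so $\pi_p(C)$ is an essential cycle in the graph $\lk(p)$, hence nontrivial, hence $p\in\bt{x,y,z}$. No amalgamation is needed. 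If you incorporate the fact that $\pi_p$ is a global deformation retraction, your punctured-cone detour becomes unnecessary and the argument collapses to the paper's: the only thing left to check is that $\pi_p(C)$ is essential in $\lk(p)$, which convexity of $\hat C$ delivers directly. I'd suggest restructuring around that observation rather than trying to make the van Kampen transfer rigorous.
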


\begin{proof}
By \cref{homotopies in link}, $\triangle(x,y,z)$ is homotopic to $\triangle(\hat{x},\hat{y},\hat{z})\subseteq\lk(q)\cong \partial B$ and $\measuredangle(\hat{x},\hat{y}),\measuredangle(\hat{x},\hat{z}),\measuredangle(\hat{y},\hat{z})<\pi$ by assumption that $q\notin\triangle(x,y,z)$. By \cref{triangles are spiky in link}, $\triangle(\hat{x},\hat{y},\hat{z})$ is a spiky triangle, and since $q\in \bt{x,y,z}$, it is non-degenerate. The essential part of $\triangle(\hat{x},\hat{y},\hat{z})$, $C$, is thus an embedded circle in $\partial B$. 
In conclusion, we got a homotopy in $X-p$ of $\triangle(x,y,z)\to C$.

Let $\hat{C}$ be the cone over $C$ with respect to $q$. It is a convex disc and $\hat{p}\in C$ if and only if $p$ is in the cone. 
If $p \notin \hat{C}$ then $C$ is filled with a disk $\hat{C}$ in $X-p$, and so $p\notin \bt{x,y,z}$. If $p \in \hat{C}$ then $C$ projects to a non-trivial loop in $\lk(p)$ (since $\hat{C}$ is convex). It follows that $C$ is non trivial in $\pi_1(X-p)$, whence $p\in\bt{x,y,z}$.

\end{proof}

Similarly one proves the following claim

\begin{claim} \label{triangle in link 2}
Let $q\in \triangle(x,y,z)$ and let $B$ be a small enough ball around $q$ such that $\partial B\cong \lk(q)$ and $B$ does not intersect any side of $\triangle(x,y,z)$ not containing $q$. Let $C$ be the essential part of $\pi_q(\triangle(x,y,z)-q)$. Then for $p\in B$, $\hat{p}\in C$ if and only if $p\in \bt{x,y,z}$.
\end{claim}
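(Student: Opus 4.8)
The plan is to mimic the proof of \cref{triangle in link 1} with the necessary adaptations for a point $q$ lying \emph{on} the geodesic triangle rather than strictly outside it. First I would invoke \cref{homotopies in link}, which already enumerates the four possible local pictures of $\pi_q(\triangle(x,y,z)-q)$ in $\lk(q)$ according to whether $q$ lies in the interior of a single side, in the intersection of two sides (and whether or not it equals a vertex), or in the intersection of all three sides. In each case \cref{homotopies in link} tells us that $\pi_q(\triangle(x,y,z)-q)$ is homotopic, relative to the images $\hat x,\hat y,\hat z$, to a union of geodesic arcs (and possibly isolated points) in $\lk(q)$ of length $<\pi$ each; by \cref{triangles are spiky in link} this union is a spiky object whose essential part $C$ is a disjoint union of embedded arcs and points. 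The ball $B$ is chosen small enough that $\partial B\cong\lk(q)$ and $B$ meets only the side(s) of $\triangle(x,y,z)$ through $q$, so that the portion of $\triangle(x,y,z)$ inside $B$ is exactly the cone over $\pi_q(\triangle(x,y,z)-q)$ from $q$.

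Next I would separate the two directions. For a point $p\in B$ with $\hat p\notin C$: the essential part $C$ is contained in $\partial B\setminus\{\hat p\}$, which deformation retracts onto $C$ within $\lk(p)$; coning this retraction from $q$ and gluing with the homotopy from \cref{homotopies in link} exhibits $\triangle(x,y,z)$ as null-homotopic in $X-p$ — more precisely, the part of the triangle outside $B$ together with the cone-over-$C$ disc fills the loop while avoiding $p$ — so $p\notin\bt{x,y,z}$. (When $q$ lies on exactly one side, $C$ is a single arc and one also has the two ``spike'' segments $\overline{\hat x\hat z}$, $\overline{\hat y\hat z}$ collapsing; I would note that these spikes are irrelevant to whether the triangle bounds, so only the essential arcs matter.) Conversely, if $\hat p\in C$, then $p$ lies on the cone over $C$ from $q$; since this cone is a convex subset of $B$ (the ball being isometric to a cone over $\lk(q)$ by \cref{path in link}), the projection $\pi_p$ sends $C$ to a non-backtracking, hence homotopically essential, configuration in $\lk(p)$, and tracing back through the homotopy of \cref{homotopies in link} shows $\triangle(x,y,z)$ is non-trivial in $\pi_1(X-p)$, i.e. $p\in\bt{x,y,z}$.

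The main obstacle, and the reason this is not a literal copy of \cref{triangle in link 1}, is that when $q$ lies on the triangle the projected configuration $C$ need not be a single circle: in the one-side case it is an arc with two free endpoints, and in the two-sides-not-at-a-vertex case it is an arc together with an isolated point; when $q$ is a vertex or the whole triangle degenerates through $q$, $C$ may be a single arc or even empty. I would handle this by observing that in every case the cone over $C$ from $q$ is still convex, and that ``being filled by a disc in $X-p$'' is governed precisely by whether $p$ sits on that cone: the non-essential spike parts contribute a contractible tail to $\triangle(x,y,z)$ that can be homotoped away rel endpoints without ever meeting $p$ (for $p\in B$ small), so they never obstruct a filling and never force $p$ into $\bt{x,y,z}$. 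Once this is said, the two implications go through exactly as above, and I would remark — as the paper does after \cref{triangle in link 1} — that the argument is ``similar,'' spelling out only the points where the multi-component nature of $C$ enters.
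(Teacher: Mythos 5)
Your proposal is correct and follows essentially the same route as the paper, which disposes of the claim in a single sentence by appealing to the case analysis of \cref{homotopies in link} and a repeat of the cone-over-$C$ argument from \cref{triangle in link 1}. You have merely spelled out the cases and noted where $C$ ceases to be a circle, which is exactly the adaptation the paper's terse ``argue similarly'' proof has in mind.

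One small caution, though it does not distinguish you from the paper: the blanket assertion that the cone over $C$ is convex deserves care. When $C$ is an arc of length greater than $\pi$, or when $C$ has an isolated-point component (the spike direction), the cone over $C$ need not be convex as a subset of the ball; the isolated-point component of $C$ in particular corresponds to $p$ lying on $\triangle(x,y,z)$ itself, a case best handled by noting directly that $\triangle(x,y,z)\subseteq\bt{x,y,z}$ rather than by any convexity of a degenerate ``fan.'' This is the same level of rigour the paper employs in \cref{triangle in link 1}, so your mimicry is faithful, but a fully careful write-up would treat the spike component separately before invoking the cone argument for the arc or circle component.
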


\begin{proof}
    We argue similarly to the proof of \cref{triangle in link 1}, by analysing $\hat{C}$ (in the notations of the proof of \cref{triangle in link 1}) for the different cases in \cref{homotopies in link}.
\end{proof}

We remind that \cref{homotopies in link} characterizes the connected components of $\pi_q(\triangle(x,y,z)-q)$.

In \cref{triangles are discs} we will show that full triangles in $X$ are homeomorphic to full spiky triangles.

\begin{definition}
We say $x_1,x_2,x_3\in X$ are \emph{strongly non-collinear} if they are distinct and for any $\set{i,j,k}=\set{1,2,3}$ we have $\overline{x_ix_j}\cap \overline{x_ix_k}=\set{x_i}$.
\end{definition}

\begin{definition}
The points $x_1,x_2,x_3,x_4\in X$ form a \emph{lanky quadrilateral} if $\overline{x_ix_j}\cap\overline{x_kx_l}$ is a non-degenerate segment for some $\set{i,j,k,l}=\set{1,2,3,4}$.
E.g. see \cref{fig:1dim}. Note that this does not imply that the four points lie on a geodesic.
\end{definition}

\begin{figure}[H]
    \centering
    \includegraphics[]{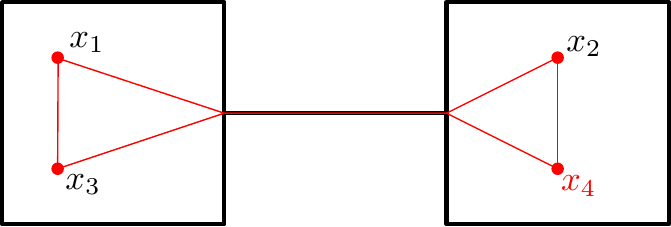}
    \caption{$x_1,x_2,x_3,x_4$ form a lanky quadrilateral}
    \label{fig:1dim}
    \end{figure}

In the definition of 2-median, \cref{def of 2-median}, there are two cases to consider: either the four points form a lanky quadrilateral, in which case the intersection of the four full triangles is an interval, or not and then the intersection is a point.

\section{Minimal triangles}
\label{sec: minimal triangle}
In \cref{geodesic triangle is spiky} we showed that geodesic triangles in $X$ are homeomorphic to spiky triangles. In this section, we prove that full triangles in $X$ are homeomorphic to full spiky triangles.

\begin{theorem} \label{triangles are discs} Let $x,y,z\in X$ and let $T$ be an (abstract) full spiky triangle with $\partial T\cong \triangle(x,y,z)$. Then there exists an embedding $f:T\hookrightarrow X$ such that $f|_{\partial T}$ maps $\partial T$ homeomorphically onto $\triangle(x,y,z)$. 

Any such map has image $\bt{x,y,z}$.
\end{theorem}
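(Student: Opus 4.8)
The plan is to build the embedding $f$ by induction on the (finite) set of cells of $X$ that meet $\bt{x,y,z}$, using the local description of $\bt{x,y,z}$ provided by \cref{triangle in link 1} and \cref{triangle in link 2}. First I would establish, via \cref{geodesic triangle is spiky} and \cref{triangles are spiky in link}, that $\bt{x,y,z}$ is a compact subcomplex of $X$ (after passing to a subdivision that makes $\overline{xy},\overline{yz},\overline{zx}$ subcomplexes): it is contained in the union of the closed cells meeting the geodesic triangle together with finitely many ``bounded complementary regions,'' and these are finite in number because $X$ is locally finite in the relevant ball and the triangle is compact. Having identified $\bt{x,y,z}$ as a finite polygonal subcomplex $K$ of $X$, the task reduces to showing that $K$ is homeomorphic to a full spiky triangle by a homeomorphism restricting to the identity on $\triangle(x,y,z)$, since then one takes the abstract $T$, the given identification $\partial T \cong \triangle(x,y,z)$, and composes with this homeomorphism to get $f$; the final sentence ``any such map has image $\bt{x,y,z}$'' follows because an embedding of a full spiky triangle realizing $\triangle(x,y,z)$ as its topological boundary has image consisting exactly of the points $p$ for which $\triangle(x,y,z)$ bounds in a punctured neighborhood, i.e. exactly the points where $\triangle(x,y,z)$ is nontrivial in $\pi_1(X-p)$ — which is the definition of $\bt{x,y,z}$.

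To see that $K = \bt{x,y,z}$ is a full spiky triangle, I would separate the tripod/spike part from the disc part exactly as in the proof of \cref{geodesic triangle is spiky}: write $\triangle(x,y,z) = \triangle(x',y',z') \cup \overline{xx'} \cup \overline{yy'} \cup \overline{zz'}$ where the segments $\overline{xx'}$ etc. are the maximal shared sub-segments. The spikes $\overline{xx'},\overline{yy'},\overline{zz'}$ are isolated 1-dimensional pieces of $K$ (a small ball around an interior point of a spike meets $K$ in an arc, by \cref{triangle in link 2} applied with the essential part of $\pi_q(\triangle - q)$ being two points), so it suffices to show the ``core'' $K_0 = K \setminus (\text{open spikes})$ is a disc with boundary $\triangle(x',y',z')$. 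For this I invoke the two local claims: for $q$ an interior point of $K_0$ not on $\triangle(x',y',z')$, \cref{triangle in link 1} says a small ball around $q$ meets $K$ in the cone over an embedded circle $C \subseteq \lk(q)$, i.e. $q$ has a disc neighborhood in $K_0$; for $q$ on a side of $\triangle(x',y',z')$ but not a vertex, \cref{triangle in link 2} with the third case of \cref{homotopies in link} gives that $C$ is the essential part of $\overline{\hat x \hat z}\cup\overline{\hat y \hat z}$ — an arc — so $q$ has a half-disc neighborhood; and at $x',y',z'$ the link picture gives a half-disc neighborhood as well. Hence $K_0$ is a compact surface with boundary, its boundary is precisely $\triangle(x',y',z')$, and it is simply connected (it sits inside the simply connected $X$ and deformation retracts appropriately, or: it is a planar subcomplex — I would actually argue it embeds in $\bbR^2$ locally and is connected and simply connected, so by the classification of surfaces it is a disc).

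The main obstacle I anticipate is proving simple connectivity (equivalently, that $K_0$ has exactly one boundary circle and no handles) — the local claims only give that $K_0$ is a surface with boundary, and one must rule out, say, an annulus or higher-genus piece, and also rule out the boundary circle of $K_0$ being disjoint from $\triangle(x',y',z')$ or wrapping it multiply. I would handle this by a global argument: any loop $\gamma$ in $K_0$ bounds a disc in $X$ (as $X$ is CAT(0), hence contractible), and I would push this disc into $K_0$ using the characterization of $\bt{x,y,z}$ — a point $p$ enclosed by $\gamma$ but not in $K$ would have $\triangle(x,y,z)$ trivial in $\pi_1(X-p)$, yet $\gamma \subseteq K_0$ is homotopic within $X - p$ to a nonzero multiple of the small loop around $p$ only if... — more cleanly, I would use a Mayer–Vietoris / degree argument, or simply cite that a connected planar $2$-complex that is a surface with connected boundary is a disc. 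A secondary subtlety is the case where $\triangle(x,y,z)$ is degenerate (a tripod or segment), where $\bt{x,y,z} = \triangle(x,y,z)$ itself and $T$ degenerates accordingly; this should be checked separately but is immediate from the definitions.
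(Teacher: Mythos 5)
Your plan reverses the paper's: you try to analyse $\bt{x,y,z}$ directly via the local claims \cref{triangle in link 1} and \cref{triangle in link 2} and conclude that it is a spiky disc, whereas the paper \emph{constructs} an embedded filling first (Stadler's theorem / \cref{Fary-Milnor}) and then shows any embedded filling has image $\bt{x,y,z}$ (\cref{injective discs}), extending to the spiky case by \cref{triangle inj implies spiky triangle}. The place you flag as the ``main obstacle'' is exactly where your route has a real gap that you do not close. The local link analysis does establish that the core $K_0$ is a compact surface with boundary, but the promotion to ``disc'' needs genuine global input, and none of your three candidate arguments supply it. Citing ``a connected planar 2-complex that is a surface with connected boundary is a disc'' is circular: planarity of $K_0$ (equivalently, simple connectivity, and connectedness of $\partial K_0$) is precisely what has to be proved, and $K_0$ sits inside a non-planar 2-complex $X$. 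The homotopy sketch (``a point $p$ enclosed by $\gamma$ but not in $K$ \ldots yet $\gamma$ is homotopic in $X-p$ to a nonzero multiple of the small loop around $p$'') is not coherent: $X-p$ deformation retracts onto a graph with free fundamental group, so there is no distinguished ``loop around $p$'' to take a multiple of, and there is no canonical notion of a point ``enclosed by $\gamma$'' in a 2-complex that is not a manifold. A priori $K_0$ could contain an essential annulus or handle whose core curve is nullhomotopic in $X$ but not in $K_0$, and nothing you say rules this out.

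Ruling out those configurations is precisely what the Gauss--Bonnet count in \cref{local link} (equivalently, the Fary--Milnor argument in the appendix) accomplishes, and it is a global curvature argument, not something that follows from the pointwise link descriptions. Your proposal would have to re-derive that input; as written, there is a hole at the simple connectivity step. Your treatment of the second sentence of the theorem (``any such $f$ has image $\bt{x,y,z}$'') is on the right track and agrees in spirit with \cref{injective discs}, but even there the claim is not immediate from ``has image consisting exactly of the points $p$ for which $\triangle$ bounds in a punctured neighborhood''; one needs the projection $\pi_p$ to $\lk(p)$ and the fact that $\lk(p)$ is a graph, as in the paper's proof of \cref{injective discs}.
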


We will prove the theorem first in the strongly non-collinear case and then deduce it to the general case. 
The existence of an embedding in the strongly non-collinear case is a special case of Theorem 78 in \cite{stadler2021}. We give a more elementary proof in our case in the appendix, see \cref{Fary-Milnor}.

The uniqueness of the image in the strongly non-collinear case follows from the following lemma.

\begin{lemma}\label{injective discs}
Assume $\Gamma$ is a piecewise linear curve in a CAT(0) polygonal complex $X$ and assume there exists a $\Delta$-complex homeomorphic to $\bbD^2$ and a cellular and injective map $f:D\to X$ such that $f(\partial D)=\Gamma$, then $f(D)$ equals the union of $\Gamma$ with the set of points $p\in X-\Gamma$ such that $\Gamma$ is not trivial in $\pi_1(X-p)$.

In particular, for any $g:\mathbb{D}^2\to X$ cellular with respect to some $\Delta$-complex structure on $\bbD^2$ and injective such that $g(\partial \bbD)=\Gamma$, we have $g(\mathbb{D}^2)=f(D)$.
\end{lemma}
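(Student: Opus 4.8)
The plan is to show that the image $f(D)$ of an injective cellular map from a $\Delta$-disc onto $\Gamma = f(\partial D)$ is intrinsically characterized as $\Gamma$ together with exactly those points $p \in X - \Gamma$ for which $\Gamma$ represents a nontrivial class in $\pi_1(X-p)$; since this characterization makes no reference to the particular map $f$, the ``in particular'' clause comparing $f$ and $g$ is then immediate. So it suffices to prove the set equality $f(D) = \Gamma \cup \{p \in X - \Gamma : [\Gamma] \neq 0 \text{ in } \pi_1(X-p)\}$.

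First I would establish the inclusion $\supseteq$. Suppose $p \in X - \Gamma$ but $p \notin f(D)$. Then $f$ factors as a map $D \to X - p$, and since $D$ is contractible (it is homeomorphic to $\bbD^2$), the loop $f|_{\partial D}$ is null-homotopic in $X - p$; that is, $\Gamma$ is trivial in $\pi_1(X-p)$. Contrapositively, if $\Gamma$ is nontrivial in $\pi_1(X-p)$ then $p \in f(D)$. (Since $f$ is injective, $p \notin \Gamma$ forces $p$ to be in the interior $f(D - \partial D)$ if it is in $f(D)$ at all, but we don't even need that here.) This direction only uses contractibility of $D$ and the hypothesis $f(\partial D) = \Gamma$.

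Next, the inclusion $\subseteq$: I must show that every point of $f(D)$ lies in $\Gamma$ or is ``surrounded'' by $\Gamma$ in the sense that $[\Gamma] \neq 0$ in $\pi_1(X-p)$. Take $p \in f(D) - \Gamma$; by injectivity $p = f(a)$ for a unique $a$ in the interior of $D$. The idea is that $f$ is a homeomorphism onto its image, $X$ is locally contractible (being a CW/polygonal complex), and $f(D)$ is a compact subset homeomorphic to a $2$-disc with $a$ an interior point. I want to argue that $\Gamma = f(\partial D)$ is nontrivial in $\pi_1(X-p)$ by producing a retraction-type obstruction. Concretely: consider the composite $\partial D \hookrightarrow D - \{a\} \xrightarrow{f} X - \{p\}$ (well-defined since $f$ is injective and $f(a) = p$). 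If $[\Gamma] = [f|_{\partial D}]$ were trivial in $\pi_1(X - p)$, the inclusion-induced map would kill it; but $\partial D \hookrightarrow D - \{a\}$ is a homotopy equivalence (a punctured disc deformation-retracts to its boundary), so $f|_{\partial D}$ is homotopic in $X - p$ to the loop $f$ traversed around a small circle about $a$. The remaining task is to show this small loop is nontrivial in $\pi_1(X-p)$: here I use that $f$ restricted to a small disc neighborhood of $a$ in $D$ is an embedding into $X$, hence (via local contractibility of $X$ and invariance of domain-type reasoning, or directly via the long exact sequence of the pair and the local homology $H_2(X, X-p)$) the small circle links $p$. I'd phrase this using $\check{C}$ech cohomology or Alexander duality in $X$ if needed, but the cleanest route is: $f(D)$ is a $2$-complex, $p$ has a neighborhood $U$ in which $f(D) \cap U$ looks like a planar disc through $p$, so $\pi_1(U - p) \to \pi_1(X - p)$ sends a small linking circle to $[\Gamma]$ nontrivially — one still needs that this small circle is itself nontrivial in $\pi_1(X-p)$, which follows because $X$ is aspherical-like locally but more honestly from the fact that $X$ is CAT(0), hence contractible, and a point in a $2$-complex embedded in a contractible complex is ``non-separating in codimension'' in a controlled way.

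The main obstacle I anticipate is precisely this last point: verifying that a point $p$ in the interior of the embedded disc $f(D)$ is actually ``seen'' by $\Gamma$ in $X - p$, i.e. that the presence of the disc through $p$ is enough to make the boundary circle homotopically nontrivial once $p$ is removed. This is intuitively clear — you cannot fill $\Gamma$ while avoiding an interior point of a disc bounded by $\Gamma$ — but making it rigorous in the generality of an arbitrary CAT(0) polygonal complex requires care, because $X - p$ is typically far from simply connected and one must argue that this particular class survives. I expect the right tool is a Mayer–Vietoris or excision argument computing $H_1(X - p)$ or $H^1_c$-type invariants near $p$, or alternatively invoking \cref{triangle in link 1} / \cref{triangle in link 2}-style local analysis: near $p$, the link $\lk(p)$ contains the projection of $f(D)$, which contains an embedded circle (the link of $p$ inside the disc), and one shows this circle is essential in $\lk(p)$, hence the cone structure forces $[\Gamma]$ to be detected. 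I would most likely present the argument by combining the easy $\supseteq$ direction with a local-to-global argument: fix $p \in f(D) - \Gamma$, choose a small ball $B$ around $p$ with $\partial B \cong \lk(p)$ and $B \cap \Gamma = \emptyset$; then $f^{-1}(B)$ contains a neighborhood of $a$ homeomorphic to a disc, whose boundary maps to an embedded circle $C \subseteq \lk(p)$; since $f$ is injective this circle bounds no disc in $f(D) \cap B$ on the side containing $p$, and as in the proof of \cref{triangle in link 1} the cone over $C$ is convex, so $C$ — and hence $\Gamma$, which is homotopic to $C$ in $X - p$ through $f(D) - \{a\}$ — is nontrivial in $\pi_1(X - p)$. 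This mirrors exactly the local link analysis already developed in \S\ref{sec: triangles in CAT0}, so the proof should reduce to assembling those pieces.
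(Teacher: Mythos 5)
Your proposal lands on essentially the same argument as the paper. The easy inclusion is identical: $p\notin f(D)$ makes $f$ a nullhomotopy of $\Gamma$ in $X-p$. For the hard inclusion, both you and the paper project to the link of $p$: since $f$ is injective, the induced map $f_p:\lk_D(a)\to\lk_X(p)$ is injective, $\lk_D(a)$ is a circle, so its image is an embedded circle $C$ in the graph $\lk_X(p)$; the radial deformation retraction of $D-\{a\}$ onto $\lk_D(a)$ shows $\Gamma$ is homotopic to $C$ in $X-p$.

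The one place where your justification is slightly off is the final step — why $C$ is essential in $\pi_1(X-p)$. You invoke the cone/convexity argument from \cref{triangle in link 1}, but that argument is designed for a different situation: $C$ an essential circle in $\lk(q)$ and $p\ne q$ a nearby point, asking whether $C$ survives in $\pi_1(X-p)$. Here $C$ sits in $\lk(p)$ itself, and the correct (and simpler) reason is the one the paper uses: an embedded circle in a graph is homotopically nontrivial in that graph, and $\pi_p:X-p\to\lk(p)$ is a deformation retraction (\cite[Proposition II.2.4]{BridHäf}), so $\pi_1(X-p)\cong\pi_1(\lk(p))$. Your earlier musings about Alexander duality, local homology, and ``aspherical-like'' behaviour are unnecessary — the fact that links in a 2-dimensional polygonal complex are graphs makes this completely elementary. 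With that substitution the argument is exactly the paper's.
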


\begin{proof}
Denote by $M$ the union of $\Gamma$ with the set of points $p\in X$ such that $\Gamma$ is not trivial in $\pi_1(X-p)$. Clearly $M\subseteq f(D)$, as if $p\notin f(D)$ then $f$ provides a null homotopy of $\Gamma$ in $X-p$ and thus $p\notin M$.

For the other direction, let $p\in D$. There are projections, as in \cref{projection to link}, $\pi_p:\partial D\to \lk(p)$ and $\pi_{f(p)}:X-f(p)\to \lk(f(p))$ and as $f$ is injective, we have $f_p\circ \pi_p$ is homotopic equivalent to $\pi_{f(p)}\circ f$.

As $\pi_p(\partial D)$ is a cycle, so is $f_p(\pi_p(\partial D))$, hence it is not null homotopic in $\lk(f(p))$ which is a graph. By the equality above $\pi_{f(p)}\circ f(\partial D)=\pi_{f(p)}(\Gamma)$ is not trivial, so $\Gamma$ is not trivial in $\pi_1(X-p)$.
\end{proof}

Now for the general case.

\begin{proof}[proof of \cref{triangles are discs}]
The existence of such a map follows from the strongly non-collinear case using the following lemma.

\begin{lemma}\label{triangle inj implies spiky triangle}
    If $f$ is a map from an abstract spiky triangle $T$ to $X$ such that it maps $\partial T$ homeomorphically onto $\triangle(x,y,z)$ and the triangle part of $T$ is mapped injectively, then $f$ is injective.
\end{lemma}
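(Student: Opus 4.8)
The plan is to reduce Lemma~\ref{triangle inj implies spiky triangle} to the content of Lemma~\ref{geodesic triangle is spiky}, which already tells us that $\triangle(x,y,z)$ decomposes as a (possibly degenerate) Euclidean triangle part $\triangle(x',y',z')$ together with three spikes $\overline{xx'},\overline{yy'},\overline{zz'}$ meeting it only at $x',y',z'$. Correspondingly the abstract spiky triangle $T$ decomposes as its triangle part $T_0$ (an abstract disc) with three spike arcs $I_x,I_y,I_z$ attached at the three corresponding corners. By hypothesis $f|_{T_0}$ is injective and $f|_{\partial T}$ is a homeomorphism onto $\triangle(x,y,z)$; the only thing that could fail is that the image of some spike hits the image of $T_0$ or of another spike, or a spike self-intersects. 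So the core of the argument is purely a statement about how the three spikes sit relative to the triangle part.

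First I would set up notation: write $T = T_0 \cup I_x \cup I_y \cup I_z$ where $I_x$ is an arc from the corner $x'_T$ of $T_0$ to the boundary vertex $x$, and similarly for $y,z$; these arcs are pairwise disjoint and meet $T_0$ only at their initial endpoints. Since $f|_{\partial T}$ is a homeomorphism onto $\triangle(x,y,z)$ and $\triangle(x,y,z) = \triangle(x',y',z') \cup \overline{xx'}\cup\overline{yy'}\cup\overline{zz'}$ by Lemma~\ref{geodesic triangle is spiky}, and $f$ maps the boundary of $T_0$ (the three arcs $\overline{x'_Ty'_T}$ etc.\ along $\partial T$) into $\triangle(x',y',z')$ while $f|_{T_0}$ is injective, I get $f(T_0)$ is an embedded disc whose boundary is $\triangle(x',y',z')$. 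On the other hand $f(I_x)$ is the arc $\overline{xx'}$, $f(I_y) = \overline{yy'}$, $f(I_z) = \overline{zz'}$, each of which is a genuine embedded segment in $X$ (being a sub-segment of a geodesic), so each spike is mapped injectively. Hence self-intersection of a spike and cross-intersection of two distinct spikes are already ruled out by $f|_{\partial T}$ being injective (these spikes all lie on $\partial T$). The only remaining possibility to exclude is that $f(\inner I_x)$ meets $f(T_0)$ in a point other than where it is supposed to, i.e.\ $\overline{xx'}$ re-enters the embedded disc $f(T_0)$.

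To rule this out I would argue that $\overline{xx'}\cap \triangle(x',y',z') = \{x'\}$: indeed by construction $x'$ is the point of $\overline{xy}\cap\overline{xz}$ furthest from $x$, so the open spike $\overline{xx'}\setminus\{x'\}$ lies on $\overline{xy}$ but is disjoint from $\overline{xz}$, and similarly the parts of $\triangle(x',y',z')$ consist of subsegments of $\overline{xy},\overline{yz},\overline{zx}$ past $x',y',z'$; two geodesics in a CAT(0) space intersect in a connected segment, so $\overline{xx'}$ cannot touch $\triangle(x',y',z')$ again without contradicting maximality of $x'$ (this is exactly the content already used in the proof of Lemma~\ref{geodesic triangle is spiky}). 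But $f(T_0)$ is an embedded disc with boundary exactly $\triangle(x',y',z')$, and a point in the interior of $f(T_0)$ would be a point $p$ with $\triangle(x',y',z')$ nontrivial in $\pi_1$ of a punctured neighbourhood; since $\overline{xx'}$ is a geodesic it cannot pass through such an interior point and come back out, as that would force $\overline{xx'}$ to cross $\partial f(T_0) = \triangle(x',y',z')$ transversally, again contradicting that $\overline{xx'}\cap\triangle(x',y',z') = \{x'\}$. Combining: $f(I_x)$ meets $f(T_0)$ only at $f(x'_T) = x'$, and likewise for $I_y, I_z$; together with the injectivity of $f|_{T_0}$, of each $f|_{I_\bullet}$, and the disjointness of the spikes, $f$ is globally injective.

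The main obstacle I anticipate is making the topological step rigorous: knowing $f|_{T_0}$ is injective and its boundary maps to $\triangle(x',y',z')$ does not a priori force $f(T_0)$ to be ``filled in'' on the correct side, nor does it immediately say a geodesic cannot sneak through its interior; one really needs that a geodesic arc $\overline{xx'}$ meeting the closed set $\triangle(x',y',z')$ only at the single endpoint $x'$ also meets the disc $f(T_0)$ only there, which requires knowing $f(T_0)$ is a topological disc with $\partial f(T_0) = \triangle(x',y',z')$ and invoking the Jordan curve / invariance of domain type separation in $X$ near such points. I would handle this by a local analysis at each point of $\overline{xx'}\setminus\{x'\}$ using the link description from Claims~\ref{triangle in link 1} and \ref{triangle in link 2}: a point $p$ in the interior of the embedded disc $f(T_0)$ has $\pi_p(\partial f(T_0))$ essentially an embedded circle in $\lk(p)$, whereas a point on the open geodesic $\overline{xx'}$ not on $\triangle(x',y',z')$ projects $\triangle(x',y',z')$ to a contractible arc or point in its link, so these two sets of points are disjoint. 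This is the step I would write out carefully; everything else is bookkeeping.
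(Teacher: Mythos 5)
Your approach matches the paper's in its essential structure: the key fact that has to be established is that the open spike $\overline{xx'}\setminus\{x'\}$ is disjoint from $f(T_0)=\bt{x',y',z'}$ (the identification $f(T_0)=\bt{x',y',z'}$ coming from \cref{injective discs}, just as you say), and both you and the paper ultimately reduce that to a computation in $\lk(p)$ for $p$ on the open spike. The difference is mostly one of exposition: your intermediate ``transversal crossing'' argument is not reliable in a CAT(0) polygonal complex --- the image $f(T_0)$ is a $2$-dimensional subset of a space that is in general not a surface, so there is no Jordan-curve separation forcing a geodesic that meets the interior of $f(T_0)$ to cross $\triangle(x',y',z')$; for instance the complex can branch along an edge and a geodesic can leave the disc's carrier through the branching without ever touching $\partial f(T_0)$. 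You flag this yourself, and the repair you propose (the link computation) is exactly what the paper does. However, you leave the key point of that repair as an assertion: you say the projection of $\triangle(x',y',z')$ to $\lk(p)$ is a ``contractible arc or point'' without saying why. The reason, which is the actual substance of the paper's proof, is that since $p$ lies strictly between $x$ and $x'$ on $\overline{xx'}=\overline{xy}\cap\overline{xz}$, the two sides $\overline{x'y'}\subset\overline{xy}$ and $\overline{x'z'}\subset\overline{xz}$ both lie past $p$ along geodesics through $p$, so their projections to $\lk(p)$ are the single point $\hat{x'}=\hat{y'}=\hat{z'}$; by \cref{projection to link} the projection of $\overline{y'z'}$ is then homotopic rel endpoints to the degenerate geodesic $\overline{\hat{y'}\hat{z'}}$, whence the whole loop is null-homotopic and $p\notin\bt{x',y',z'}$ by definition. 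You should write that step out explicitly, since without it the argument has a genuine gap at its center. Once that is done, your version is, if anything, a slightly cleaner presentation than the paper's, since you work directly from the hypothesis that $f|_{T_0}$ is injective rather than re-invoking Stadler's theorem inside the lemma.
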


\begin{proof}

Let $x,y,z\in X$ and denote by $x'\in X$ the point in which $\overline{xy},\overline{xz}$ diverge and similarly denote $y',z'$, as in \cref{fig:spiky triangle}.
By \cref{geodesic triangle is spiky}, $\triangle(x,y,z)$ is homeomorphic to a spiky triangle and $\triangle(x',y',z')$ is homeomorphic to a triangle.

By Theorem 78 in \cite{stadler2021}, there exists a full triangle $D$ and $f:D\to X$ an embedding such that $f|_{\partial D}:\partial D\to \triangle(x',y',z')$ is a homeomorphism.

By \cref{injective discs} $f(D)=\bt{x',y',z'}$.

Glue to $D$ the three spikes, i.e. let $I_1,I_2,I_3$ be three intervals homeomorphic to $\overline{xx'}, \overline{yy'}, \overline{zz'}$ respectively and let $a_j\in I_j$ be one of their endpoints. 
Consider $T=D\sqcup I_1\sqcup I_2\sqcup I_3/\sim$ where $a_1\sim f^{-1}(x'),a_2\sim f^{-1}(y'),a_3\sim f^{-1}(z')$ and consider the map $\bar{f}:T\to X$, induced from $f$ and the homeomorphisms of $I_j$ to the geodesic segments.

In order to show $\bar{f}$ is injective it is enough to show that for $p\in \partial T-D$ we have $\bar{f}(p)\notin \bar{f}(D)=\bt{x',y',z'}$. Indeed as $\bar{f}|_{\partial T}$ is a homeomoprhism, we have that $\bar{f}(p)\notin \triangle(x',y',z')$, hence, by \cref{triangle in link 1}, $\bar{f}(p)\in \bt{x',y',z'}$ if and only if the concatenation of $\overline{\hat{x'}\hat{y'}}, \overline{\hat{y'}\hat{z'}}, \overline{\hat{z'}\hat{x'}}$ in $\lk(p)$ is not null homotopic. But two out of the three segments are constant in $\lk(p)$, as they are continuations of geodesics emanating from $p$, so the union of the three is null homotopic.
\end{proof}

In conclusion, we found an injective map from an abstract full spiky triangle to $X$ mapping the boundary of the triangle homeomorphically onto $\triangle(x,y,z)$. We are left to prove that any such map $f:T\to X$ has image $\bt{x,y,z}$.

Let $D$ be the triangle part of $T$ and let $x',y',z'$ as above. We have that $f$ maps $\partial D$ homeomorphically onto $\triangle(x,y,z)$, hence $f(D)=\bt{x',y',z'}$ by \cref{injective discs}. We get $f(T)=f(\partial T)\cup f(D)=\triangle(x,y,z)\cup \bt{x',y',z'}=\bt{x,y,z}$.
\end{proof}

\begin{remark}
    We have shown that in the strongly non-collinear case, our ad hoc definition coincides with the definition of the image of a minimal disc as in Lytchak and Wenger \cite{lytchak}.

\end{remark}

\section{Existence of intersection}
\label{sec: existence}
For $x_1,x_2,x_3,x_4\in X$ let $W_1,W_2,W_3,W_4$ be the four full triangles they define, where $W_i$, $1\le i \le 4$, is the full triangle of the three points $\{x_1,x_2,x_3,x_4\} - \{x_i\}$.
In \cref{non-empty intersection} we will show $\bigcap _{i=1}^4 W_i \neq \emptyset$.
If one of the $W_i$'s is a tripod or interval (i.e. the triangle part of the spiky triangle is degenerate), the claim is clear - the branching point of the tripod would be in the intersection. So we may assume this is not the case.

By \cref{triangles are discs} there exists $F_1,F_2,F_3,F_4$ full spiky triangles, $f_i:F_i \to W_i$ homeomorphism such that each side of $F_i$ maps to a side of $W_i$. The maps glue along the boundaries of the $F_i$'s and descend to a map $f$. That is, consider 
\[ S\cong \bigcup_{i=1}^4 F_i/\sim \text{ where }x\sim y \iff x\in \partial  F_i, y\in \partial F_j, f_i(x)=f_j(y).
\]

\begin{claim}
If $p \in W_i$ then there exists $j\neq i$ such that $p\in W_j$.
\end{claim}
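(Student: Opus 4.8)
The plan is to reduce the statement to a purely combinatorial fact about the four projected points $\hat x_1,\dots,\hat x_4$ in the graph $\lk(p)$. After relabelling we may assume $i=4$, so that $W_4=\bt{x_1,x_2,x_3}$, $W_1=\bt{x_2,x_3,x_4}$, $W_2=\bt{x_1,x_3,x_4}$, $W_3=\bt{x_1,x_2,x_4}$, and we must find $j\in\{1,2,3\}$ with $p\in W_j$.

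First I would dispose of the case that $p$ lies on one of the six geodesic segments $\overline{x_kx_l}$. Writing $\{m,m'\}=\{1,2,3,4\}\setminus\{k,l\}$, such a $p$ satisfies $p\in\overline{x_kx_l}\subseteq\triangle(x_k,x_l,x_m)\subseteq\bt{x_k,x_l,x_m}=W_{m'}$, and symmetrically $p\in W_m$; since $m\neq m'$, at least one of $m,m'$ is different from $4$, giving the desired index. In particular this settles the subcase $p\in\triangle(x_1,x_2,x_3)$, so from now on I would assume $p$ lies on none of these segments, whence $p\notin\triangle(x_k,x_l,x_m)$ for every triple.

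Next I would transfer the problem to $\lk(p)$. As $p\notin\overline{x_kx_l}$ for each pair, \cref{path in link} yields $\angle_p(x_k,x_l)<\pi$ and a unique geodesic $\overline{\hat x_k\hat x_l}$ in $\lk(p)$ between $\hat x_k$ and $\hat x_l$ (here one also uses that $\lk(p)$ has girth $\geq 2\pi$, \cref{link condition}). Since the projection $\pi_p\colon X-p\to\lk(p)$ is a homotopy equivalence and, by \cref{homotopies in link}, $\pi_p(\triangle(x_k,x_l,x_m))$ is homotopic rel the vertex-projections to the geodesic triangle $\triangle(\hat x_k,\hat x_l,\hat x_m)$ in $\lk(p)$, the definition of the full triangle gives, for $p$ off the triangle,
\[ p\in\bt{x_k,x_l,x_m}\iff\triangle(\hat x_k,\hat x_l,\hat x_m)\text{ is homotopically non-trivial in }\lk(p). \]
Hence it suffices to prove the following statement in the graph $\lk(p)$: if $\triangle(\hat x_1,\hat x_2,\hat x_4)$, $\triangle(\hat x_1,\hat x_3,\hat x_4)$ and $\triangle(\hat x_2,\hat x_3,\hat x_4)$ are all null-homotopic, then so is $\triangle(\hat x_1,\hat x_2,\hat x_3)$ — for then $p\notin W_1,W_2,W_3$ would force $p\notin W_4$, contrary to hypothesis.

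To prove this last assertion I would note that null-homotopy of $\triangle(\hat x_k,\hat x_l,\hat x_4)$ means precisely that $\overline{\hat x_k\hat x_l}$ is homotopic rel endpoints to $\overline{\hat x_k\hat x_4}\ast\overline{\hat x_4\hat x_l}$. Substituting the three resulting identities into the loop $\overline{\hat x_1\hat x_2}\ast\overline{\hat x_2\hat x_3}\ast\overline{\hat x_3\hat x_1}$ and cancelling the two backtracking detours $\overline{\hat x_4\hat x_2}\ast\overline{\hat x_2\hat x_4}$ and $\overline{\hat x_4\hat x_3}\ast\overline{\hat x_3\hat x_4}$ leaves the null-homotopic loop $\overline{\hat x_1\hat x_4}\ast\overline{\hat x_4\hat x_1}$; equivalently, the same identities exhibit $\triangle(\hat x_1,\hat x_2,\hat x_3)$ as a subset of $\overline{\hat x_1\hat x_4}\cup\overline{\hat x_2\hat x_4}\cup\overline{\hat x_3\hat x_4}$, a union of geodesics out of the single point $\hat x_4$ and hence a tree. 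The main obstacle I anticipate is not this cancellation, which is routine, but the bookkeeping in the previous paragraph: carefully verifying that full-triangle membership really does pass between $X-p$ and $\lk(p)$ in all the (near-)degenerate configurations, and that the relevant geodesics in $\lk(p)$ are unique, which rests on the girth bound together with the $<\pi$ length estimates supplied by \cref{path in link}.
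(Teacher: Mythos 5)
Your proof is correct, but it follows a genuinely different route from the paper's. The paper argues globally: it uses the complex $S$ built by gluing the four (abstract) spiky triangles $F_1,\dots,F_4$ along their boundaries, and observes that if $p\notin W_1\cup W_2\cup W_3$ then $f(S - F_4)\subseteq X-p$; since $S-F_4$ is contractible, this provides a null-homotopy of $\triangle(x_1,x_2,x_3)$ in $X-p$, contradicting $p\in W_4$. You instead localize to $\lk(p)$: after disposing of the easy case where $p$ lies on one of the six sides, you translate membership in a full triangle into non-triviality of $\triangle(\hat x_k,\hat x_l,\hat x_m)$ in $\pi_1(\lk(p))$ via \cref{path in link}, \cref{homotopies in link} and the deformation retraction $\pi_p$, and then carry out a direct cancellation in $\pi_1(\lk(p))$: each null-homotopy $\overline{\hat x_k\hat x_l}\simeq\overline{\hat x_k\hat x_4}\ast\overline{\hat x_4\hat x_l}$ is substituted into the loop $\triangle(\hat x_1,\hat x_2,\hat x_3)$, and the resulting backtracking detours through $\hat x_4$ collapse. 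Both arguments embody the same topological fact (if three faces of a tetrahedron fail to separate $p$, so does the fourth), but the paper's version leans on the Sperner/gluing scaffolding already erected in that section (and on the non-degeneracy reduction made just before the Claim), while your version is more self-contained and avoids the construction of $S$ entirely. Your bookkeeping of the degenerate cases and of geodesic uniqueness in $\lk(p)$ is sound; note also that your parenthetical ``equivalently'' remark — that $\triangle(\hat x_1,\hat x_2,\hat x_3)$ actually lies inside $\overline{\hat x_1\hat x_4}\cup\overline{\hat x_2\hat x_4}\cup\overline{\hat x_3\hat x_4}$ — is indeed valid here, because in a graph the unique reduced representative of a homotopy class of paths is a geodesic, so each side of the triangle is literally contained in the corresponding pair of geodesics to $\hat x_4$.
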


\begin{proof}
Without loss of generality $p\in W_4$. Assume towards contradiction that $p\notin \bigcup_{i=1}^3 W_i$.
By assumption, $f( S - F_4)\subseteq X-p$. Since $S - F_4$ is contractible, $f|_{\partial F_4} : \partial F_4 \to \triangle(x_1,x_2,x_3)$ is null homotopic in $X-p$, in contradiction to the definition of $W_4$.
\end{proof}

\begin{proposition}\label{non-empty intersection}
$\bigcap_{1\leq i \leq 4} W_i \neq \emptyset$.
\end{proposition}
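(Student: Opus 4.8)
The plan is to use the combinatorial structure we have already set up: the complex $S = \bigcup_{i=1}^4 F_i / {\sim}$ with the map $f \colon S \to X$ whose restriction to each $F_i$ is the homeomorphism onto $W_i$. The preceding claim says precisely that every point of $S$ lies in at least two of the faces $F_i$ — more usefully, that if $q \in F_i$ then $f(q) \in W_j$ for some $j \neq i$, so $q$ has a preimage in $F_j$ as well. We want a point of $S$ lying in the (images under $f$ of) all four faces; such a point maps to $\bigcap_i W_i$.

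The natural tool is Sperner's lemma, as flagged in the introduction's outline of \S\ref{sec: existence}. First I would note that $S$ is a simply connected surface (a sphere): it is the union of four discs glued along their boundary arcs in the pattern of the boundary of a tetrahedron, since the three sides of each $F_i$ are identified with sides of the other three $F_j$'s exactly as the facets of a $3$-simplex meet along its edges. Thus $S$ is homeomorphic to $\partial \Delta^3 \cong \bbS^2$, and we may take a $\Delta$-complex structure on $S$ refining the given one in which each $F_i$ is a full subcomplex. Now color each vertex $v$ of $S$ by a label in $\{1,2,3,4\}$: choose $\ell(v) = i$ for some $i$ such that $v \notin F_i$ — such an $i$ exists because $v$ lies in at most three of the $F_i$ (if $v$ were in all four, $f(v)$ would already be in $\bigcap_i W_i$ and we would be done). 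One checks this is a Sperner-type coloring: a vertex on the "edge" $F_i \cap F_j$ of the tetrahedral pattern lies in both $F_i$ and $F_j$, so its label is forced into $\{k,l\} = \{1,2,3,4\}\setminus\{i,j\}$, and a vertex in the interior of a face $F_i$ can receive any label other than... wait, it must avoid the labels of the faces containing it; being interior to $F_i$ it is in no other face, so it can be any of $\{1,2,3,4\}\setminus\{i\}$ — in particular its label is never $i$, and the standard Sperner hypotheses on $\partial \Delta^3$ are met. Sperner's lemma for the sphere (equivalently, applying the simplicial version to each of the two "halves", or using the fact that a Sperner coloring of $\partial\Delta^3$ forces a rainbow triangle) then yields a $2$-simplex $\sigma$ of $S$ whose three vertices carry three distinct labels, say $\{i,j,k\}$. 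Since $\sigma$ is a single simplex it lies in some face $F_m$; as none of its vertices is labeled $m$... hmm, that only gives $m \notin \{i,j,k\}$, so $m = l$, the fourth index.

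Here is where I need to extract the conclusion, and this is the step I expect to be the main obstacle: a rainbow triangle gives a simplex that is \emph{not} in $F_i$, $F_j$, or $F_k$, which is the wrong direction — I want a point in all four $W$'s. The fix is to run Sperner on a sufficiently fine subdivision and pass to a limit, or better, to reformulate: instead of labeling $v$ by a face avoiding $v$, I should use the \emph{KKM/Sperner-cover} formulation. Cover $S$ by the four closed sets $A_i = f^{-1}(W_i) \cap S = $ (the union of the copies of $F_i$ together with all points of other $F_j$'s mapping into $W_i$); the claim proved above says $S = \bigcup_i A_i$ with each $A_i$ closed, and moreover $A_i \supseteq F_i$ and $A_i$ meets the tetrahedral "face $F_i$" — actually $F_i \subseteq A_i$ — so the cover $\{A_i\}$ is "adapted" to the tetrahedral cell structure in the sense required by the KKM lemma on $\partial\Delta^3$: the union of the $A_i$ for $i$ in any proper subset $T \subsetneq \{1,2,3,4\}$ contains the corresponding sub-boundary $\bigcup_{i \in T} F_i$. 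By the KKM lemma (equivalently the combinatorial Sperner lemma applied to finer and finer subdivisions, taking a convergent subsequence of rainbow simplices using compactness of $S$), $\bigcap_{i=1}^4 A_i \neq \emptyset$. Any point $q$ in this intersection satisfies $f(q) \in W_i$ for all $i$, so $f(q) \in \bigcap_{i=1}^4 W_i$, proving the proposition.

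To be careful about the details I would: (1) verify $S \cong \bbS^2$ explicitly by checking the gluing pattern matches $\partial\Delta^3$ (the three sides of $F_i$, under the homeomorphisms $f_i$, go to the three sides $\overline{x_jx_k}$ of $W_i$, and side $\overline{x_jx_k}$ is shared exactly between $W_i$ and $W_l$ where $\{i,j,k,l\} = \{1,2,3,4\}$, precisely the incidence of $\partial\Delta^3$); (2) confirm each $A_i$ is closed — this uses that $W_i$ is closed in $X$ (it is a finite union of cells by \cref{triangles are discs}) and $f$ is continuous; (3) invoke the KKM lemma on the standard $2$-sphere $\partial\Delta^3$ with the covering $\{A_i\}$ and the boundary-compatibility just checked. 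The substantive content is really the earlier claim $p \in W_i \Rightarrow p \in W_j$ for some $j \neq i$, which is already proved; the present proposition is the topological harvesting of it, and the only genuine subtlety is getting the direction of the Sperner/KKM argument right so that one lands in the intersection rather than the complement.
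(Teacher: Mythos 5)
You have the right ingredients (the claim ``$p\in W_i\Rightarrow p\in W_j$ for some $j\neq i$'', a Sperner-type argument, and a limit over finer subdivisions), and you correctly diagnosed that the naive coloring ``label $v$ by some $i$ with $v\notin F_i$'' points in the wrong direction. However, the KKM reformulation you use to repair it does not actually hold, so there is a genuine gap. The hypothesis you invoke --- that $A_i$ is closed, $A_i\supseteq F_i$, and $\bigcup_{i\in T}A_i\supseteq\bigcup_{i\in T}F_i$ for every $T\subsetneq\{1,2,3,4\}$ --- is automatic from $F_i\subseteq A_i$ and is \emph{not} a valid KKM hypothesis on the sphere $\partial\Delta^3$: taking $A_i=\sigma_i$ (the closed face opposite $v_i$) satisfies it with $\bigcap_iA_i=\emptyset$. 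Even your stronger observation that each point of $S$ lies in at least two of the $A_i$ does not force a common point: on $S^1$, taking $A_1=e_1\cup e_2$, $A_2=e_2\cup e_3$, $A_3=e_3\cup e_4$, $A_4=e_4\cup e_1$ for the four edges of a square covers the circle with every point in exactly two closed sets and empty total intersection, and this suspends to a counterexample on $S^2$. The classical KKM lemma is a statement about a solid simplex with the condition anchored at the \emph{vertices} ($v_i\in A_i$ and each sub-face covered by the corresponding sub-family); your sets anchor instead to the opposite faces, and no degree-theoretic obstruction forces a rainbow simplex on a closed surface.

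The paper avoids this by not working on the whole sphere at all: it restricts to a single face $F_4$ (a disc), where Sperner's lemma does have a boundary condition to bite on. Concretely, the paper triangulates $F_4$ with mesh $\to 0$ and colors vertices as follows: on $\partial F_4$ the colors $1,2,3$ are assigned by a fixed Sperner pattern tied to the three sides $\overline{x_2x_3},\overline{x_1x_3},\overline{x_1x_2}$ (so the three corners get distinct colors and each side carries only the two colors of its endpoints); an interior vertex $v$ is colored $\min\{j: f(v)\in W_j\}$, which lies in $\{1,2,3\}$ precisely by the preceding claim. Two-dimensional Sperner then produces a rainbow $2$-cell in each subdivision; taking a convergent subsequence of their barycenters gives $p'\in F_4$ whose image $p=f(p')$ is in $W_1,W_2,W_3$ (from the three colors, since the labeled vertices converge to $p'$) and in $W_4$ (since $p'\in F_4$). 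The key design choice you are missing is that the \emph{boundary} coloring is prescribed by hand to make the Sperner boundary hypothesis hold, rather than being read off from the covering $\{A_i\}$; that is what lets the argument land in $\bigcap W_i$ rather than its complement.
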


We will make use of Sperner's lemma, see \cite{Flegg} for the proof.

\begin{lemma}[Sperner's lemma] \label{sperner}
Consider a triangulation of a triangle with vertices $p,q,r$. Let there be a colouring of the 0-cells by three colours such that each of the three vertices $p,q,r$ has a distinct colour and the 0-cells that lie on any side of the triangle have only two colours - the two colours at the endpoints of that side. 
Then there exists at least one ``rainbow triangle'' - a 2-cell with vertices coloured with all three colours. More precisely, there must be an odd number of rainbow triangles.
\end{lemma}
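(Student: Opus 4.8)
The plan is to give the classical ``door-counting'' argument. Fix the colouring so that $p,q,r$ have colours $1,2,3$ respectively, and call a $1$-cell of the triangulation a \emph{door} if its two endpoints are coloured $1$ and $2$. For a $2$-cell $T$ of the triangulation let $d(T)$ denote the number of doors among its three sides. A short case check shows $d(T)\in\{0,1,2\}$: one has $d(T)=1$ precisely when the vertices of $T$ carry all three colours (i.e. $T$ is a rainbow triangle), $d(T)=2$ precisely when the vertex colours of $T$ are $\{1,1,2\}$ or $\{1,2,2\}$, and $d(T)=0$ otherwise. Summing over all $2$-cells and reducing modulo $2$,
\[
  \sum_{T} d(T) \equiv \#\{\text{rainbow triangles}\} \pmod 2 .
\]

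Next I would recompute $\sum_T d(T)$ by exchanging the order of summation: $\sum_T d(T) = \sum_{e} m(e)$, where $e$ ranges over all doors and $m(e)$ is the number of $2$-cells having $e$ as a side. Since we are triangulating a (filled) triangle, each interior $1$-cell is a side of exactly two $2$-cells and each $1$-cell on $\partial\triangle(p,q,r)$ is a side of exactly one; hence interior doors contribute an even amount and
\[
  \sum_{T} d(T) \equiv \#\{\text{doors lying on } \partial\triangle(p,q,r)\} \pmod 2 .
\]
By the hypothesis on the colouring, the side $\overline{pr}$ carries only colours $1,3$ and the side $\overline{qr}$ only colours $2,3$, so neither contains a door; thus every boundary door lies on the side $\overline{pq}$.

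Finally I would do the one-dimensional count on $\overline{pq}$: listing the colours of the subdivision points of $\overline{pq}$ in order gives a word in $\{1,2\}$ starting with $1$ (at $p$) and ending with $2$ (at $q$), and the number of doors on this side is the number of consecutive pairs with different colours, which is odd since the word changes value an odd number of times. Chaining the three displayed congruences yields that $\#\{\text{rainbow triangles}\}$ is odd, hence at least one. I do not anticipate a real obstacle; the only points deserving care are the finite case analysis giving ``$d(T)=1 \iff T$ rainbow'' and the remark that an interior $1$-cell of a triangulated triangle borders exactly two $2$-cells. (If one prefers to avoid invoking this last planar fact directly, an alternative is induction on the number of $2$-cells, peeling off a boundary triangle at a time, but the door count above is shorter and self-contained.)
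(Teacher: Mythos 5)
Your door-counting argument is correct and complete. Note that the paper does not prove this lemma at all; it simply cites Flegg for the proof, and the standard proof found there is essentially the parity argument you give. All the steps check out: the case analysis showing $d(T)$ is odd exactly when $T$ is rainbow, the double count $\sum_T d(T)=\sum_e m(e)$ with $m(e)=2$ for interior $1$-cells and $m(e)=1$ for boundary $1$-cells of a triangulated disc, the observation that the colouring hypothesis forces all boundary doors onto $\overline{pq}$, and the one-dimensional count that a word in $\{1,2\}$ beginning with $1$ and ending with $2$ has an odd number of alternations. So your proposal supplies a self-contained proof where the paper only gives a reference, which is a perfectly good (and arguably preferable) state of affairs for the reader.
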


\begin{figure}[htp]
    \centering
    \includegraphics[]{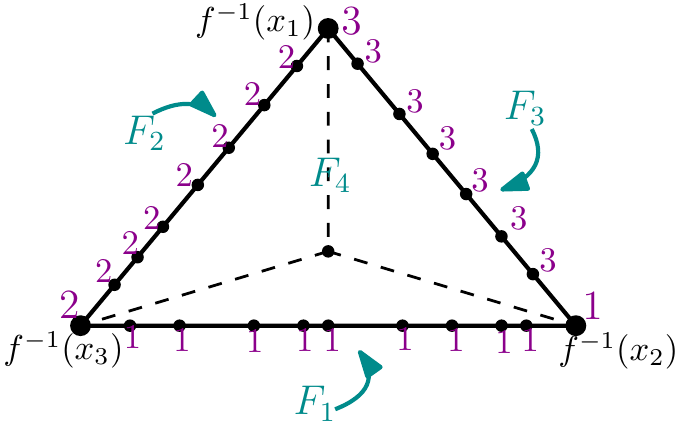}
    \caption{Colouring of the sides of $F_4$}
    \label{fig:coulouredges}
\end{figure}

\begin{proof}[Proof of \cref{non-empty intersection}]
Consider a sequence of triangulations of the triangle part of $F_4$ such that the supremum of diameters of the 2-cells goes to 0. 
We define a colouring of the 0-cells of the triangulation by three colours $1,2,3$. On $\partial F_4$ we define it as in \cref{fig:coulouredges}, i.e. for a 0-cell $v\in f^{-1}(\overline{x_2x_3})-f^{-1}(x_3)$ we let its colour be 1, the colours of 0-cells in $f^{-1}(\overline{x_1x_3})-f^{-1}(x_1)$ and $f^{-1}(\overline{x_1x_2})-f^{-1}(x_2)$ are 2 and 3 respectively. For $v$ a 0-cell in the interior of $F_4$ we define its colour to be $\min\set{j \mid f\prs{v} \in W_j}$ which is well defined from the previous claim.

By Sperner's lemma, \cref{sperner}, in each coloured triangulation there exists a rainbow triangle $T_n$. Let $p'_n$ be the barycenter of $T_n$, by compactness (up to passing to a subsequence) $p'_n$ converges to $p'\in F_4$.
We now claim that $p=f\prs{p'}$ is in $\bigcap_{1\leq i\leq 4} W_i$.

Note that for $j\in \set{1,2,3}$ the sequence of vertices of $T_n$ coloured $j$ also converges to $p'$, as the bound on the diameter of 2-cells goes to 0, hence $p'$ is arbitrarily close to $W_j$ for all $j\in \set{1,2,3}$, hence $p\in W_j$, and of course $p\in W_4$, as $p'\in F_4$, so we are done.
\end{proof}

\section{Discreteness of intersection} 
\label{sec: discreteness}
In this section we will prove the following theorem.
\begin{theorem} \label{discreteness}
Let $x_1,x_2,x_3,x_4\in X$, denote $W_1,W_2,W_3,W_4$ the four full triangles they define, then

\begin{itemize}
    \item If $x_1,x_2,x_3,x_4$ do not form a lanky quadrilateral, then $\bigcap_{i=1}^4 W_i$ is a finite set of points.
    \item If $x_1,x_2,x_3,x_4$ form a lanky quadrilateral, then $\bigcap_{i=1}^4 W_i$ is the union of a finite set and $\bigcup_{\set{i,j,k,l}=\set{1,2,3,4}}(\overline{x_ix_j}\cap \overline{x_kx_l})$, in particular it is 1-dimensional.
\end{itemize}
\end{theorem}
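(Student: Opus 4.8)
The plan is to analyze the intersection $\bigcap_i W_i$ locally, point by point, using the link characterization of full triangles from \cref{triangle in link 1} and \cref{triangle in link 2}. The key principle is that membership of $p$ in a full triangle $W_i$ is governed by whether a certain cycle in $\lk(p)$ — the essential part of the projection $\pi_p(\triangle_i)$ of the corresponding geodesic triangle — survives as a homotopically nontrivial loop, and that whether $p$ is \emph{isolated} in $\bigcap_i W_i$ is detected by how the cones over these essential parts sit inside $\lk(p)$: if the cones over the four essential parts have a common point strictly inside the ball $B$ other than $p$ (i.e.\ the essential parts have a common point in $\lk(p)$), then $p$ is not isolated; if their only common ``reason to be in all four $W_i$'' forces $p$ to be a vertex or to lie on the geodesics, then we get discreteness.

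Let me describe the steps. First I would show that $\bigcap_i W_i$ is a \emph{closed} subset of $X$ (each $W_i$ is closed, being homeomorphic to a compact full spiky triangle by \cref{triangles are discs}) and lies inside each of the four geodesic triangles' ``thickened'' regions, so it is compact; hence it suffices to prove that the non-lanky part has no accumulation points, equivalently every point of it is isolated. Second, take $p\in\bigcap_i W_i$ and a small ball $B$ around $p$ with $\partial B\cong\lk(p)$, small enough that $B$ avoids all vertices $x_i$ and all sides of all four geodesic triangles not through $p$. For each $i$, let $C_i\subseteq\lk(p)$ be the essential part of $\pi_p(\triangle_i - p)$; by \cref{triangle in link 1} and \cref{triangle in link 2}, $B\cap W_i$ is exactly the cone over $C_i$. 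So $B\cap\bigcap_i W_i$ is the cone over $\bigcap_i C_i$, and $p$ is isolated in the intersection iff $\bigcap_{i=1}^4 C_i=\emptyset$ in $\lk(p)$. Third — and this is the combinatorial heart — I would case-split on the position of $p$ relative to the $\triangle_i$ (the four cases of \cref{homotopies in link}: $p$ in the open interior region, on one open edge, at an intersection of two edges but not a vertex, etc.) and, using that girth$(\lk(p))\ge 2\pi$ so each relevant geodesic arc is embedded and arcs of total length $<2\pi$ cannot cocircle, show that a common point $\xi\in\bigcap_i C_i$ would force, via the spiky-triangle structure of each $\triangle(\hat x,\hat y,\hat z)$ in $\lk(p)$ (\cref{triangles are spiky in link}) and the fact that for a generic $p$ at least two of the three arcs in each $C_i$ degenerate to points, a very constrained configuration: either $\xi$ witnesses that two of the four \emph{original} geodesics $\overline{x_ix_j},\overline{x_kx_l}$ overlap through $p$ — which is exactly the lanky quadrilateral case and produces the stated $1$-dimensional set $\bigcup(\overline{x_ix_j}\cap\overline{x_kx_l})$ — or no such $\xi$ exists and $p$ is isolated. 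Fourth, in the lanky case, verify directly that every point of $\overline{x_ix_j}\cap\overline{x_kx_l}$ (when nonempty) does lie in all four $W_i$ (it lies on two sides of two of the triangles and is a ``skinny'' interior point of the other two by the link argument, since the projected triangle degenerates), and that the remainder of $\bigcap_i W_i$ outside this segment is again discrete by the same local analysis applied to points off the overlap segment, hence finite by compactness.

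I expect the main obstacle to be the third step: the careful link-combinatorics showing that a common point of the four essential cycles $C_i$ in $\lk(p)$ can only occur in the lanky situation. The subtlety is that the $C_i$ are not arbitrary cycles but essential parts of projected geodesic triangles, so their pairwise and fourfold intersections are constrained by the CAT(0) condition (girth $\ge 2\pi$), and one must rule out, e.g., three of the $C_i$ sharing an arc while the fourth passes through a single point of it without the original geodesics overlapping — a bookkeeping argument over which vertices $x_i$ project where, organized by the four cases of \cref{homotopies in link}. I would handle it by noting that each $C_i$, being the essential part of a (spiky) geodesic triangle in a graph of girth $\ge2\pi$, is an embedded circle of length $\ge 2\pi$ (when nondegenerate), and that two embedded circles of length $\ge 2\pi$ in such a graph meeting in more than a point must share an arc; chasing which original geodesic segments these shared arcs come from then forces the lanky condition. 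The degenerate sub-cases (some $C_i$ empty, i.e.\ some $W_i$ a tripod or interval) are easy and can be dispatched first, as was already done before \cref{non-empty intersection}.
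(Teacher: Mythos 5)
Your reduction to showing $\bigcap_{i=1}^4 C_i=\emptyset$ in $\lk(p)$ (so that $p$ is isolated) plus compactness of $\bigcap_i W_i$ is a legitimate reformulation, and it is correct that the local picture is governed by the cone over $\bigcap_i C_i$. But the tool you propose for the ``combinatorial heart'' is not true: two embedded circles of length $\ge 2\pi$ in a graph of girth $\ge 2\pi$ that meet in more than a point need \emph{not} share an arc. For instance, take the graph with two vertices $u,v$ joined by four parallel edges each of length $\pi$ (this is exactly the kind of link that appears when $p$ lies in the interior of a $1$-cell of $X$ with four incident $2$-cells, or at a vertex): the girth is $2\pi$, and the two cycles $e_1\cup e_2$ and $e_3\cup e_4$, each of length $2\pi$, meet precisely in $\{u,v\}$ and share no arc. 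So your intended shortcut cannot rule out the configuration in which $\bigcap_i C_i$ is a nonempty finite set of points away from the $\hat x_i$, which would make $p$ a non-isolated point of the intersection without producing overlapping sides. Ruling this out requires the genuine case analysis that the paper performs in \cref{2-cell} and \cref{one cell}, which stratifies by whether $p$ lies in a $2$-cell (so $\lk(p)$ is a circle of length $2\pi$, and a pigeonhole on antipodes applies) or in the interior of a $1$-cell (so $\lk(p)$ is a two-pole multigraph of longitudes of length $\pi$, and one argues case by case on how many longitudes the $\hat x_i$ occupy and which poles the geodesics between them pass through).

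Beyond this gap, the overall shape of your argument and the paper's also differ in a way worth noting. You want to show pointwise isolation and then invoke compactness to get finiteness; the paper instead directly constrains where points of $\bigcap_i W_i$ can live: by \cref{2-cell} and \cref{one cell} every such point lies in $X^{(0)}$, or at a transverse crossing of a side with $X^{(1)}$, or on an intersection of two opposite sides, and each of these is finite when the quadrilateral is not lanky. In particular, the paper does not need to say anything at all about $p\in X^{(0)}$ beyond noting there are finitely many, whereas your approach would also have to analyze $\bigcap_i C_i$ in the (arbitrary, girth $\ge 2\pi$) link of a $0$-cell. So even if the gap in step 3 is patched along the lines of the paper's \cref{one cell}, your route ends up requiring at least as much link-combinatorics, not less, plus the extra vertex-link analysis the paper sidesteps.
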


Let $x_1,x_2,x_3,x_4\in X$, we will refer to them as \textit{vertices} and to the geodesic segments between them as \textit{sides}. By opposite sides we mean $\overline{x_ix_j},\overline{x_kx_l}$ for $\set{i,j,k,l}=\set{1,2,3,4}$. Denote $W_1,W_2,W_3,W_4$ the four full triangles in $X$. Assume throughout this section that $p\in \bigcap_{i=1}^4 W_i$.

We will study the link of $p$ and show that the case $p\in \bigcap_{i=1}^4 W_i$ is very restricted. Indeed, denoting $\hat{x}_1,\hat{x}_2,\hat{x}_3,\hat{x}_4$ the image of $x_1,x_2,x_3,x_4$ in $\lk(p)$, we have from \cref{triangle in link 1} and \cref{triangle in link 2} that either two are antipodal, that is $p$ is on a side; or $\triangle(\hat{x}_i,\hat{x}_j,\hat{x}_k)\subseteq \lk(p)$ are well defined for all $i,j,k\in \set{1,2,3,4}$, distinct and, by \cref{homotopies in link}, not trivial in homotopy.

\begin{claim} \label{2-cell}
If $p\in \bigcap_{i=1}^4 W_i- X^{(1)}$, then $p$ is in the intersection of two opposite sides.
\end{claim}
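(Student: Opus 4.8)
Suppose $p \in \bigcap_{i=1}^4 W_i$ lies in the interior of a $2$-cell $\sigma$; then $\lk(p)$ is a circle of length $2\pi$ (an isometrically embedded Euclidean circle of that circumference, since near $p$ the complex looks like a flat plane). The plan is to examine the four geodesic triangles $\triangle(x_i,x_j,x_k)$ through the link of $p$, using \cref{triangle in link 1} and \cref{triangle in link 2}, and to derive a strong constraint on the four points $\hat x_1,\hat x_2,\hat x_3,\hat x_4 \in \lk(p)$. First I would rule out, one case at a time, the possibilities where $p$ lies on a side $\overline{x_ix_j}$: if $p \in \overline{x_ix_j}$ and $p$ is interior to a $2$-cell, then $\overline{x_ix_j}$ passes straight through $p$, so $\hat x_i$ and $\hat x_j$ are antipodal in the circle $\lk(p)$; conversely if $p$ is not on any side, then by \cref{triangle in link 1} each $\triangle(\hat x_i,\hat x_j,\hat x_k)$ is a nondegenerate spiky triangle whose essential part is an embedded circle in $\lk(p)$, which — since $\lk(p)$ is itself a circle — must be all of $\lk(p)$.

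\textbf{Main dichotomy.} So either (a) for every triple $\{i,j,k\}$ the geodesic triangle $\triangle(\hat x_i,\hat x_j,\hat x_k)$ wraps all the way around the circle $\lk(p)$, or (b) some pair $\hat x_i,\hat x_j$ is antipodal because $p$ lies on the side $\overline{x_ix_j}$. The goal is to eliminate case (a) entirely. In case (a), the essential part of $\triangle(\hat x_i, \hat x_j, \hat x_k)$ being the full circle means the three arcs $\overline{\hat x_i \hat x_j}, \overline{\hat x_j \hat x_k}, \overline{\hat x_k \hat x_i}$ together cover $\lk(p)$ and have total length exactly $2\pi$ (their essential part is the whole circle, traversed once). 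That is, for each triple the three points $\hat x_i, \hat x_j, \hat x_k$ are "cyclically positioned" on $\lk(p)$ and the shorter arcs between consecutive ones sum to $2\pi$. Now I would look at all four points $\hat x_1, \hat x_2, \hat x_3, \hat x_4$ on the circle simultaneously: being a finite subset of a circle, they come in some cyclic order, and up to relabelling we may take it to be $\hat x_1, \hat x_2, \hat x_3, \hat x_4$. Then for the triple $\{1,2,3\}$ the three geodesic arcs of $\triangle(\hat x_1,\hat x_2,\hat x_3)$ are $\overline{\hat x_1\hat x_2}, \overline{\hat x_2 \hat x_3}$ (each the short arc between cyclically-consecutive points) and $\overline{\hat x_3 \hat x_1}$ — but the geodesic $\overline{\hat x_3 \hat x_1}$ in the circle is the \emph{shorter} of the two arcs from $\hat x_3$ to $\hat x_1$, and for the union of the three to be the whole circle we need that shorter arc to be the one containing $\hat x_4$, i.e. we need $\angle_p(\hat x_3,\hat x_1)$ along that side to be at least the arc from $\hat x_3$ through $\hat x_4$ to $\hat x_1$. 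Comparing this requirement for the triple $\{1,2,3\}$ with the analogous one for $\{1,3,4\}$ (where now $\overline{\hat x_1 \hat x_3}$ must be the arc through $\hat x_2$) forces the arc from $\hat x_1$ to $\hat x_3$ not containing $\hat x_2$ and the arc from $\hat x_1$ to $\hat x_3$ not containing $\hat x_4$ to both be $\le \pi$; but these two arcs partition the circle, so both equal $\pi$, forcing $\hat x_1$ and $\hat x_3$ to be antipodal. Hence some pair of opposite points is antipodal after all.

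\textbf{From antipodal link points to opposite sides.} Once we know (say) $\hat x_1$ and $\hat x_3$ are antipodal in $\lk(p)$, I claim $p$ lies on the geodesic $\overline{x_1 x_3}$: indeed $\angle_p(x_1,x_3) = \measuredangle_p(\hat x_1, \hat x_3) = \pi$, and in a CAT(0) space a point with $\angle_p(x_1,x_3) = \pi$ lies on the geodesic from $x_1$ to $x_3$ (concatenating $\overline{x_1 p}$ and $\overline{p x_3}$ gives a path whose Alexandrov angle at $p$ is $\pi$, hence a local geodesic, hence — by CAT(0) — a geodesic). By the same token I must produce a \emph{second} pair. Re-examining the cyclic picture: with $\hat x_1, \hat x_3$ antipodal, the remaining points $\hat x_2$ and $\hat x_4$ lie on the two complementary semicircles (one on each, since the cyclic order is $\hat x_1, \hat x_2, \hat x_3, \hat x_4$). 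Now rerun the covering constraint: for the triple $\{2,3,4\}$ to have its geodesic triangle wrap the whole circle we already showed leads (symmetrically) to $\hat x_2, \hat x_4$ antipodal — or, if the covering constraint is not all satisfied because $p$ was on a side, that side is necessarily one of the four sides $\overline{x_i x_j}$, and tracking which ones are forced to pass through $p$ shows two \emph{opposite} sides do. I would organize this as: the set of pairs $\{i,j\}$ with $\hat x_i, \hat x_j$ antipodal is nonempty, and a short case-check on the cyclic order shows it must contain an opposite pair $\{i,j\}$, $\{k,l\}$ with $\{i,j,k,l\} = \{1,2,3,4\}$; then $p \in \overline{x_i x_j} \cap \overline{x_k x_l}$, which is the claim.

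\textbf{Expected main obstacle.} The delicate point is the combinatorial/metric bookkeeping on the circle $\lk(p)$: translating "the essential part of the link triangle is an embedded circle equal to $\lk(p)$" into precise arc-length inequalities, and then checking that the four simultaneous constraints (one per triple) cannot all hold unless two opposite pairs are antipodal. I expect the cleanest route is to fix the cyclic order of the four link points, parametrize the four gap-arcs by $a,b,c,d \ge 0$ with $a+b+c+d = 2\pi$, write each "triangle wraps the circle" condition as an inequality among partial sums (e.g. the side $\overline{\hat x_1 \hat x_3}$ has length $\min(a+b, c+d)$ and must equal $c+d$, forcing $a + b \ge c + d$; the analogous condition from triple $\{1,3,4\}$ forces $a + b \le c + d$), and conclude $a + b = c + d = \pi$ — and symmetrically $b + c = d + a = \pi$ — so all four semicircle-sums are $\pi$, giving both antipodal pairs at once. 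The only subtlety beyond that is the degenerate sub-cases where some $\hat x_i$ coincide or the "cyclic order" is not strict, but those are handled by the same argument applied to the spiky-triangle refinement in \cref{triangle in link 2} and by noting that coincident link points only make the covering condition harder to satisfy.
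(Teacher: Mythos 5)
Your approach is genuinely different from the paper's and the core idea is sound, but the write-up has a logical wrinkle and the second half is only sketched. The paper argues in two cleaner steps: first, for \emph{each} vertex $x_i$, it uses a pigeonhole argument on the two semicircles of $\lk(p)$ determined by $\hat{x}_i$ and its antipode to show $p$ must lie on a side through $x_i$ (otherwise two of the remaining link points lie in one closed semicircle and the corresponding link triangle is null-homotopic); second, since the set of sides through $p$ covers all four vertices, if it contained no opposite pair it would be the star $\overline{x_1x_2}\cap\overline{x_1x_3}\cap\overline{x_1x_4}$, forcing $\hat{x}_2=\hat{x}_3=\hat{x}_4$ and degenerating $\triangle(\hat{x}_2,\hat{x}_3,\hat{x}_4)$. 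Your gap-parametrization $a+b+c+d=2\pi$, together with the identities $(a+b)+(c+d)=(a+d)+(b+c)=2\pi$, is an attractive alternative to the pigeonhole step and correctly shows that if $p$ lies on no side then the four wrap conditions are jointly impossible.

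However, two points need repair. First, the conclusion you state in the ``main obstacle'' paragraph --- that one gets $a+b=c+d=\pi$ and $b+c=a+d=\pi$ simultaneously --- does not follow from case (a): there the constraints are \emph{strict} ($p\notin\overline{x_ix_j}$ forces $\measuredangle_p(\hat{x}_i,\hat{x}_j)<\pi$), so you obtain an outright contradiction, not equality. The correct takeaway is that case (a) is void, i.e.\ $p$ lies on at least one side, which is strictly weaker than the paper's ``$p$ lies on a side through every vertex.'' Second, because you only reach this weaker statement, the passage from ``some antipodal pair'' to ``an opposite pair'' requires a nontrivial case analysis that you gesture at (``tracking which ones are forced to pass through $p$'') but do not carry out: one must handle the sub-cases where $p$ lies on, say, $\overline{x_1x_3}$ and possibly also on a non-opposite side such as $\overline{x_1x_2}$, where coincidences like $\hat{x}_2=\hat{x}_3$ arise and one has to check carefully whether $p$ lies on the geodesic triangles of the remaining triples before invoking the wrap condition. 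This bookkeeping does close up, but it is precisely what the paper's stronger first step is designed to avoid. If you keep your gap argument, I would recommend pairing it with the paper's pigeonhole lemma (to get a side through every vertex) rather than trying to reconstruct that information from scratch.
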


\begin{proof}
The link of $p$ is a circle of length $2\pi$, as in \cref{fig:2-cell}. Assume by contradiction that $p$ is not on a side containing $x_i$, that is, the antipodal point to $\hat{x}_i$ is not $\hat{x}_j$ for all $j\in\set{1,2,3,4}$. 
The points $\hat{x}_i$ and its antipodal separate $\lk(p)$ into two arcs. By the pigeonhole principle, one of the arcs contains at least two out of $\set{\hat{x}_j}_{j\neq i}$, say $\hat{x}_k,\hat{x}_l$, so $\triangle(\hat{x}_i,\hat{x}_k,\hat{x}_l)$ is null homotopic in $\lk(p)$ which is a contradiction to $p\in \bt{x_i,x_k,x_l}$ and \cref{homotopies in link}, which states that $\triangle(\hat{x}_i,\hat{x}_k,\hat{x}_l)$ is homotopic to $\pi_p(\triangle(x_i,x_k,x_l))$.

In conclusion, $p$ is on a side emanating from each vertex. If $p$ is not in the intersection of two opposite sides, then up to permutation of the vertices, $p\in \bigcap_{j=2,3,4} \overline{x_1x_j}$ and in particular $\hat{x}_2=\hat{x}_3=\hat{x}_4$. That cannot be, as then $\triangle(\hat{x}_2,\hat{x}_3,\hat{x}_4)$ is null homotopic in $\lk(p)$ which is again a contradiction to \cref{homotopies in link}.

\begin{figure}[H]
    \centering
    \includegraphics[]{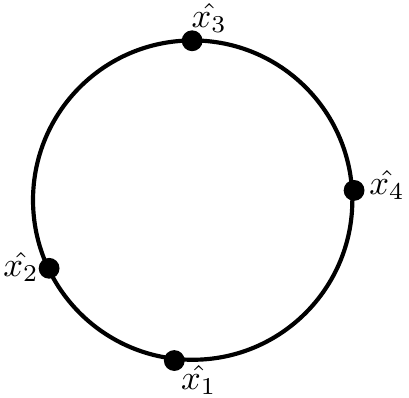}
    \caption{$\lk(p)$ for $p\notin X^{(1)}$}
    \label{fig:2-cell}
\end{figure}
\end{proof}

\begin{claim} \label{one cell}
If $p\in \bigcap_{i=1}^4 W_i$ and $p\in X^{(1)}-X^{(0)}$ is contained in the interior of a 1-cell $e$, then either $p$ is on the intersection of two opposite sides or $p$ is on a side that intersects $e$ transversely.
\end{claim}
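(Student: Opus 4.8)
The plan is to analyze the link of $p$ when $p$ lies in the interior of a $1$-cell $e$. In that case $\lk(p)$ is a metric graph which is the \emph{suspension} of the link $L = \lk_e(p)$ of $p$ inside the transverse direction to $e$: concretely, $\lk(p)$ has two distinguished antipodal points $e^+, e^-$ (the two directions along $e$) joined by a family of arcs of length $\pi$, one for each $2$-cell containing $e$ (and $L$ is the set of midpoints of these arcs). Since $X$ is CAT(0), the girth of $\lk(p)$ is at least $2\pi$, which forces $L$ to have at least two points, i.e. there are at least two $2$-cells on $e$ (or, degenerately, $e$ is a free edge and $\lk(p)$ is a single arc of length $\pi$ — but then $p$ lies on no embedded circle and cannot be in any non-degenerate full triangle, so this case is vacuous by the reduction made before \cref{non-empty intersection}).

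First I would record, using \cref{triangle in link 2} and \cref{homotopies in link}, the list of possible configurations of $\hat x_1,\hat x_2,\hat x_3,\hat x_4$ in $\lk(p)$: as already noted at the start of the section, either some two $\hat x_i,\hat x_j$ are antipodal (which, by \cref{homotopies in link}, means $p$ lies on the side $\overline{x_ix_j}$), or all four triangles $\triangle(\hat x_i,\hat x_j,\hat x_k)$ are well-defined non-degenerate spiky triangles in $\lk(p)$, hence (being homotopically non-trivial) each contains an embedded essential circle. So assume no two $\hat x_i$ are antipodal; I must derive that $p$ lies on a side crossing $e$ transversely. The key point is that an embedded circle in $\lk(p)$, being a suspension over $L$, must pass through both suspension points $e^+$ and $e^-$; equivalently, a geodesic between two points $\hat x_i,\hat x_j$ at distance $<\pi$ either avoids $\{e^+,e^-\}$ entirely (if $\hat x_i,\hat x_j$ lie in the ``same half'' of the suspension) or passes through exactly one of $e^+,e^-$.

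Next I would run a pigeonhole/separation argument analogous to the proof of \cref{2-cell}. Each embedded essential circle in $\lk(p)$ — in particular the essential part of each $\triangle(\hat x_i,\hat x_j,\hat x_k)$ — separates $\lk(p)\setminus\{e^+,e^-\}$ is wrong framing; better: the pair $\{e^+,e^-\}$ separates $\lk(p)$ into the (possibly several) arcs of the suspension, and I want to show one of the $\hat x_i$ actually equals $e^+$ or $e^-$, i.e. $p$ lies on a side transverse to $e$. Suppose not; then each $\hat x_i$ lies in the open interior of one of the suspension arcs. Consider the two ``hemispheres'' obtained by cutting along one fixed suspension arc $A_0$: I claim some triangle $\triangle(\hat x_i,\hat x_j,\hat x_k)$ fails to contain an essential circle, giving the contradiction. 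This requires a short combinatorial case analysis on how the four points distribute among the arcs $A_0,\dots$; the cleanest route is: if all four $\hat x_i$ lie in the interiors of the arcs, then for the triple not using the ``most spread out'' point one checks the triangle is contractible in $\lk(p)$, exactly as in \cref{2-cell}. I expect the bookkeeping of these cases — especially handling the possibility that two of the $\hat x_i$ lie in the \emph{same} arc versus in distinct arcs — to be the main obstacle, since one must simultaneously keep track of which triples are supposed to be non-contractible. Once the case analysis shows some $\hat x_i\in\{e^+,e^-\}$, that says precisely that the side $\overline{x_1x_i}\cup\dots$ wait — it says the geodesic side of the quadrilateral through $x_i$ and the opposite vertices enters $p$ in the direction along $e$, i.e. $p$ lies on a side crossing $e$ transversely, unless that side runs along $e$, in which case $p$ is on that side and the side meets its opposite side — and then, as in the last paragraph of \cref{2-cell}'s proof, if $p$ were on two sides from the same vertex all their far-endpoint projections would coincide, contradicting \cref{homotopies in link}, so $p$ is on two opposite sides. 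That completes the proof.

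\begin{proof}
Since $p\in X^{(1)}-X^{(0)}$ lies in the interior of a $1$-cell $e$, the link $\lk(p)$ is the suspension of a nonempty set $L$ over the two directions $e^+,e^-$ along $e$: every $2$-cell containing $e$ contributes an arc of length $\pi$ from $e^+$ to $e^-$, and these arcs are disjoint except at $e^\pm$. (If $e$ were a free edge, $\lk(p)$ would be a single arc, no embedded circle would pass through $p$, and no non-degenerate $\bt{x_i,x_j,x_k}$ could contain $p$; this is excluded by the reduction preceding \cref{non-empty intersection}, so $|L|\ge 1$ and $\lk(p)$ is a genuine suspension; the CAT(0) girth condition then gives $|L|\ge 2$.)

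Write $\hat x_1,\hat x_2,\hat x_3,\hat x_4$ for the images of $x_1,x_2,x_3,x_4$ in $\lk(p)$. If some $\hat x_i$ is antipodal to some $\hat x_j$, then by \cref{homotopies in link} $p\in\overline{x_ix_j}$, and we argue as at the end of the proof of \cref{2-cell}: if $p$ also lay on another side through $x_i$ then, by \cref{homotopies in link}, the projections of the two far vertices would coincide and $\triangle$ of the remaining three points would be contractible in $\lk(p)$, a contradiction; hence $p$ lies on $\overline{x_ix_j}$ and the opposite side, and we are done. So assume no two $\hat x_i$ are antipodal. Then, as noted at the beginning of this section, for every triple $\{i,j,k\}$ the spiky triangle $\triangle(\hat x_i,\hat x_j,\hat x_k)\subseteq\lk(p)$ is well defined, non-degenerate, and (by \cref{triangle in link 2} together with \cref{homotopies in link}) homotopically non-trivial; hence its essential part is an embedded circle in $\lk(p)$.

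Every embedded circle in the suspension $\lk(p)$ contains both suspension points $e^+$ and $e^-$: indeed, removing $\{e^+,e^-\}$ disconnects $\lk(p)$ into the open suspension arcs, each of which is a tree (an interval), and no embedded circle lies in an interval. Suppose, for contradiction, that none of $\hat x_1,\hat x_2,\hat x_3,\hat x_4$ equals $e^+$ or $e^-$, so each $\hat x_i$ lies in the interior of one of the suspension arcs $A_{a(i)}$. Fix one arc $A_0$ not containing any $\hat x_i$ if such exists; if every arc meeting $\{\hat x_i\}$ is used, pick $A_0$ to be an arc containing the fewest of the $\hat x_i$. Cutting $\lk(p)$ along the interior of $A_0$ yields an interval $J$ (a tree) that still contains all the $\hat x_i$; but then the geodesic triangle on any triple $\hat x_i,\hat x_j,\hat x_k$ lies in the tree $J$, so it is contractible in $\lk(p)$ — contradicting the previous paragraph. (In the remaining subcase where the four points occupy at most two arcs and both arcs contain two of them, the same argument applies after cutting along one of these two arcs, since the resulting interval still contains all four projections.) Hence some $\hat x_i\in\{e^+,e^-\}$.

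Finally, $\hat x_i\in\{e^+,e^-\}$ means the side of the quadrilateral through $x_i$ that realizes $\hat x_i$ enters $p$ along the edge $e$ — that is, either $p$ lies on a side crossing $e$ transversely (when only one side through $x_i$ does this), or $p$ lies on a side running along $e$ near $p$. In the latter situation the same endpoint-collision argument as above (via \cref{homotopies in link}) forces $p$ to lie on that side and on the side opposite to it, i.e. on the intersection of two opposite sides. This proves the claim.
\end{proof}
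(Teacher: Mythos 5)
Your overall strategy — model $\lk(p)$ as the suspension of a finite set over the two poles $e^+,e^-$, note that every essential loop must pass through both poles, and try to force one of the four spiky triangles $\triangle(\hat x_i,\hat x_j,\hat x_k)$ to become null-homotopic — is the right instinct and matches the paper's starting point. But the execution has genuine errors that make the proof incorrect, and they are not fixable by small repairs.

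The central error is the "cut along one arc" step. Removing the interior of a single longitude $A_0$ from a suspension over $n$ points does \emph{not} yield an interval or a tree unless $n\le 2$. For $n\ge 3$ (the generic situation for a 1-cell with at least three incident 2-cells), $\lk(p)\setminus\inner(A_0)$ is a graph with two vertices $e^\pm$ and $n-1$ parallel edges, whose fundamental group has rank $n-2\ge 1$; it still contains many essential circles. So "the geodesic triangle lies in the tree $J$, hence is contractible" does not follow. In the subcase you flag (every arc contains some $\hat x_i$), you additionally lose the asserted property that $J$ contains all four projections, since removing $\inner(A_0)$ removes whichever $\hat x_i$ lives there. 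The paper avoids this by restricting attention to the (at most four) longitudes actually hosting the $\hat x_i$'s and then explicitly tracking, for each geodesic $\overline{\hat x_i\hat x_j}$, which pole it uses; a combinatorial parity argument then shows one of the four loops misses a pole entirely and is null-homotopic. This pole-tracking bookkeeping is exactly the content of Cases 1 and 2 in the paper, and there is no shortcut around it.

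A second error is in the antipodal case. You write that if $p$ lay on another side through $x_i$, "the projections of the two far vertices would coincide." That is true when $\lk(p)$ is a circle (the 2-cell case, where antipodes are unique), but is false here: an interior point of a longitude at distance $a$ from $e^+$ has one antipode on every longitude, namely the point at distance $\pi-a$ from $e^+$. So $\hat x_j$ and $\hat x_k$ need not coincide. Moreover, even if one accepts the contradiction, "hence $p$ lies on $\overline{x_ix_j}$ and the opposite side" is a non sequitur — ruling out $p$ lying on a second side through $x_i$ says nothing about the opposite side $\overline{x_kx_l}$. Finally, in the concluding paragraph, the step from "some $\hat x_i\in\{e^+,e^-\}$" to "$p$ lies on a side transverse to $e$" is unjustified: $\hat x_i=e^+$ only says the direction from $p$ to $x_i$ runs along $e$; for $p$ to lie on a side through $x_i$ one needs an antipodal partner $\hat x_j=e^-$, which you have not produced. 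You would need the preliminary step, present in the 2-cell proof, that $p$ lies on a side emanating from each vertex, and that step itself is exactly where a suspension-adapted pigeonhole argument (rather than the circle pigeonhole) is required.
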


\begin{proof}
The link $\lk(p)$ is a graph with two vertices, which we will refer to as \textit{poles} and denote $S,N$, and no loops. For the purpose of this proof, we will refer to the 1-cells in $\lk(p)$ as \textit{longitudes} in order to differentiate from the 1-cells in $X$. We may consider only the longitudes on which $\hat{x}_1,\hat{x}_2,\hat{x}_3,\hat{x}_4$ lie, as the geodesics between them will be included in the union of those longitudes. There are at most four such longitudes. Note that if there are only two such longitudes, then the same proof as in the previous claim will give us that $p$ is in the intersection of two opposite sides.
Also note that if $p$ is on a side that does not intersect $e$ transversely, then there are only two such longitudes and we are done.

Remember that each longitude is of length $\pi$ and if $p\notin \overline{x_ix_j}$ then by \cref{path in link} $\measuredangle_p(\hat{x}_i,\hat{x}_j)<\pi$. Hence if $p\notin \triangle(x_i,x_j,x_k)$, then $\triangle(\hat{x}_i,\hat{x}_j,\hat{x}_k)$ is of length $<3\pi$, so the non-backtracking representative of $\triangle(\hat{x}_i,\hat{x}_j,\hat{x}_k)$ passes through each of the poles at most once.

\textbf{Case 1. $\hat{x}_1,\hat{x}_2,\hat{x}_3,\hat{x}_4$ lie on three longitudes.} without loss of generality $\hat{x}_1,\hat{x}_2$ lie on the same longitudes and $\hat{x}_1$ is closer to $S$ than $\hat{x}_2$, as in \cref{fig:1-cell simple}.

\begin{figure}[H]
    \centering
    \includegraphics[]{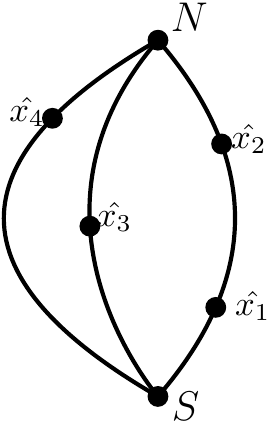}
    \caption{$\lk(p)$ for the case $p\in X^{(1)}-X^{(0)}$ and three relevant longitudes}
    \label{fig:1-cell simple}
\end{figure}

Assume towards contradiction that $p$ is not on any side, so the triangles $\set{\triangle(\hat{x}_i,\hat{x}_j,\hat{x}_k)}_{1\leq i<j<k\leq 4}$ are well defined in $\lk(p)$. They are not null-homotopic, by assumption on $p$.

We have that $S\in\overline{\hat{x}_1\hat{x}_3}$, as if not, $N\in\overline{\hat{x}_1\hat{x}_3}$. Hence $N\in\overline{\hat{x}_2\hat{x}_3}$, as a subset of a geodesic is a geodesic. It follows that $\triangle(\hat{x}_1,\hat{x}_2,\hat{x}_3)$ is null homotopic which is a contradiction to $p\in\bt{x_1,x_2,x_3}$. 

Similar reasoning replacing 3 by 4 gives $S\in \overline{\hat{x}_1\hat{x}_4}$. Similar reasoning replacing 1 by 2 and $S$ by $N$ gives $N\in \overline{\hat{x}_2\hat{x}_3},\overline{\hat{x}_2\hat{x}_4}$. 

We arrive to a contradiction, as the geodesic between $\hat{x}_3,\hat{x}_4$ does not pass through one of the poles, say $S$, hence the triangle $\triangle(\hat{x}_2,\hat{x}_3,\hat{x}_4)$ is null homotopic - does not pass through $S$.

\textbf{Case 2. $\hat{x}_1,\hat{x}_2,\hat{x}_3,\hat{x}_4$ lie on four longitudes.} It cannot be that in $\lk(p)$ all geodesics emanating from a vertex pass through the same pole.
Indeed, assume towards contradiction and without loss of generality that all geodesics emanating from $\hat{x}_1$ pass through $S$. We get that all other geodesics pass through $N$ because $\triangle(\hat{x}_1,\hat{x}_i,\hat{x}_j)$ is not null homotopic for $2\leq i<j\leq 4$. But then $\triangle(\hat{x}_2,\hat{x}_3,\hat{x}_4)$ is null homotopic, as all the geodesic segments in it pass through $N$, so do not pass through $S$. This is a contradiction to $p\in \bt{x_2,x_3,x_4}$.

Assume without loss of generality that $\hat{x}_1$ is the closest to $S$ and $\hat{x}_4$ is the closest to $N$ (need not be unique), as in \cref{fig:1-cell}. If the geodesic between $\hat{x}_1,\hat{x}_4$ passes through $S$ (respectively $N$) then all geodesics emanating from $\hat{x}_1$ (respectively $\hat{x}_4$) pass through $S$ (respectively $N$). So all geodesics emanating from $\hat{x}_1$ or $\hat{x}_4$ pass through the same pole which is a contradiction by the previous paragraph.

\begin{figure}[H]
    \centering
    \includegraphics[]{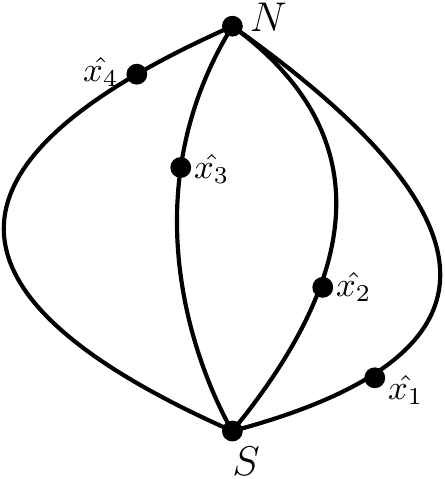}
    \caption{$\lk(p)$ for the case $p\in X^{(1)}-X^{(0)}$ and four relevant longitudes}
    \label{fig:1-cell}
\end{figure}
\end{proof}

\begin{proof}[Proof of \cref{discreteness}] 
By \cref{2-cell} and \cref{one cell}, every point of $\bigcap_{i=1}^4 W_i$ is either in $X^{(0)}$, in the transverse intersection of $X^{(1)}$ with sides, or in the intersection of sides. 

We have that each side crosses transversely a finite amount of 1-cells in $X$, $X^{(0)} \cap \bigcup_{i=1}^4 W_i$ is finite, and if $x_1,x_2,x_3,x_4$ do not form a lanky quadrilateral, two sides intersect at most at a point, so $\bigcap_{i=1}^4 W_i$ is finite.
If they do, we have that $\bigcap_{i=1}^4 W_i$ is a union of a finite set of points with all intersections of opposite sides.
\end{proof}

This theorem is essential to the proof, it will be used together with the following lemma, regarding the intersection of two triangles sharing a side.

\begin{lemma} \label{intersection of two triangles}
Assume $x_1,x_2,x_3,x_4\in X$, $x_1\neq x_2$ and $q\in \bt{x_1,x_2,x_3}\cap \bt{x_1,x_2,x_4}$, then $q$ is not an isolated point in the intersection.
\end{lemma}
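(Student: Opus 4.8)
The statement says that if $q$ lies in the intersection of two full triangles sharing the side $\overline{x_1x_2}$, then $q$ is not isolated in that intersection. The natural strategy is to produce, for every such $q$, a small nondegenerate arc through $q$ contained in $\bt{x_1,x_2,x_3}\cap\bt{x_1,x_2,x_4}$. I would work entirely in $\lk(q)$, using the local description of full triangles provided by \cref{triangle in link 1} and \cref{triangle in link 2}: for a small ball $B$ around $q$ with $\partial B\cong\lk(q)$, a point $p\in B$ lies in $\bt{x_1,x_2,x_j}$ exactly when $\hat p$ lies on the essential part $C_j$ of $\pi_q(\triangle(x_1,x_2,x_j))$ (or of $\pi_q(\triangle(x_1,x_2,x_j)-q)$ when $q\in\triangle(x_1,x_2,x_j)$). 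So it suffices to show $C_3\cap C_4$ contains more than one point of $\lk(q)$, i.e.\ that the two essential circles/paths in the graph $\lk(q)$ share a nondegenerate sub-arc, and then cone off from $q$.

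\textbf{Key steps.} First I would set up the dichotomy from \cref{homotopies in link}/\cref{triangle in link 1}--\cref{triangle in link 2} according to where $q$ sits relative to the two geodesic triangles: either $q\notin\triangle(x_1,x_2,x_j)$, where $C_j$ is an embedded circle through $\hat x_1,\hat x_2,\hat x_j$; or $q$ lies on $\overline{x_1x_2}$, or on $\overline{x_1x_j}$ or $\overline{x_2x_j}$, or at a vertex, where $C_j$ degenerates to an arc such as $\overline{\hat x_1\hat x_j}\cup\overline{\hat x_2\hat x_j}$ or $\overline{\hat x_1\hat x_2}$ or a single point. The crucial observation is that $C_3$ and $C_4$ both contain (the essential part of) $\pi_q(\overline{x_1x_2})$: both full triangles have $\overline{x_1x_2}$ as a side, so the arc $\overline{\hat x_1\hat x_2}$ — the unique non-backtracking path homotopic rel endpoints to $\pi_q(\overline{x_1x_2})$, which exists and is nondegenerate whenever $q\notin\overline{x_1x_2}$ by \cref{path in link} — lies in $C_3\cap C_4$. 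Since $x_1\ne x_2$ and (after the reduction to the triangle part, i.e.\ replacing $x_1,x_2$ by the divergence points as in \cref{geodesic triangle is spiky}) we may assume $q\ne\hat x_1,\hat x_2$, this shared arc is a nondegenerate path in $\lk(q)$; coning it off from $q$ gives a nondegenerate segment through $q$ inside both full triangles, so $q$ is not isolated. The remaining task is the degenerate positions: if $q\in\overline{x_1x_2}$ then for a point $p$ slightly off $\overline{x_1x_2}$ on the appropriate side, $\hat p$ can be taken on $\overline{\hat x_1\hat x_3}\cup\overline{\hat x_2\hat x_3}$ near one of the poles, and one checks it also lies on $\overline{\hat x_1\hat x_4}\cup\overline{\hat x_2\hat x_4}$ near the same pole because both arcs start at $\hat x_1$ (resp.\ $\hat x_2$) along the same direction (the continuation of $\overline{x_1x_2}$); if $q$ is a vertex $x_1$ of the shared side, then $C_3=C_4=\overline{\hat x_2\hat x_3}$ resp.\ $\overline{\hat x_2\hat x_4}$ are not obviously equal, but then $q\in\overline{x_1x_2}$ is again covered by the previous case and one instead moves along $\overline{qx_2}=\overline{x_1x_2}$ itself, which lies in both full triangles. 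In every case I exhibit an explicit nondegenerate arc.

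\textbf{Main obstacle.} The routine heart is harmless; the delicate point is the case analysis at degenerate positions of $q$, particularly when $q$ lies on the shared side $\overline{x_1x_2}$ (or at its endpoint), since there $\pi_q(\triangle(x_1,x_2,x_j)-q)$ has two components and $C_j$ is not a circle. I expect the main work to be checking that the two ``bigon-side'' arcs $\overline{\hat x_1\hat x_3}\cup\overline{\hat x_2\hat x_3}$ and $\overline{\hat x_1\hat x_4}\cup\overline{\hat x_2\hat x_4}$ genuinely overlap in a nondegenerate sub-arc near $q$ — this uses that both emanate from $\hat x_1$ and from $\hat x_2$ in the fixed directions determined by the geodesic $\overline{x_1x_2}$ through $q$, so they agree on an initial segment at each of those two points. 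Handling the boundary-of-side subcase $q\in\{x_1,x_2\}$ cleanly (where one simply follows the geodesic $\overline{x_1x_2}$, which is contained in every $\bt{x_1,x_2,x_j}$) closes the argument.
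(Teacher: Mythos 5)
Your plan correctly identifies the local framework (\cref{triangle in link 1}, \cref{triangle in link 2}): reduce to showing that the essential parts $C_3$, $C_4$ of $\pi_q(\triangle(x_1,x_2,x_3)-q)$, $\pi_q(\triangle(x_1,x_2,x_4)-q)$ meet in $\lk(q)$, and then cone off from $q$. This is exactly where the paper's proof also starts. However, your ``crucial observation'' — that since both full triangles share the side $\overline{x_1x_2}$, the arc $\overline{\hat x_1\hat x_2}$ lies in $C_3\cap C_4$ — is false, and this is not a detail but the entire content of the lemma.

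The point is that $C_j$ is the \emph{essential part} of $\pi_q(\triangle(x_1,x_2,x_j))$, i.e.\ by \cref{triangles are spiky in link} the central triangle of a spiky triangle in $\lk(q)$ obtained by stripping off the spikes at $\hat x_1,\hat x_2,\hat x_j$. Consequently $C_3\cap\overline{\hat x_1\hat x_2}=\overline{\hat z_1\hat z_2}$ and $C_4\cap\overline{\hat x_1\hat x_2}=\overline{\hat w_1\hat w_2}$, where $\hat z_1,\hat z_2$ (resp.\ $\hat w_1,\hat w_2$) are the divergence points along $\overline{\hat x_1\hat x_2}$ for the $j=3$ (resp.\ $j=4$) spiky triangle — and a priori these two subarcs could be disjoint intervals inside $\overline{\hat x_1\hat x_2}$. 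Your remark about ``replacing $x_1,x_2$ by the divergence points as in \cref{geodesic triangle is spiky}'' does not repair this: that divergence happens in $X$ and is moreover different for the two triangles, whereas the spikes at issue appear in $\lk(q)$ and are governed by the girth of $\lk(q)$. What the paper actually proves — via the inequalities involving $\alpha=d(\hat w_1,\hat x_2)$, $\beta=d(\hat z_2,\hat x_1)$, the length bounds from \cref{path in link}, and the $\ge 2\pi$ essential-cycle bound — is precisely that the configuration ``$\overline{\hat z_1\hat z_2}$ and $\overline{\hat w_1\hat w_2}$ disjoint'' forces $\beta<\alpha$ and, by symmetry, $\alpha<\beta$, a contradiction. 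That metric argument, which you omit, is the heart of the proof; without it your proposal asserts the conclusion rather than establishing it.
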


\begin{proof}
Let $\hat{x}_1,\hat{x}_2,\hat{x}_3,\hat{x}_4$ be the images of $x_1,x_2,x_3,x_4$ in $\lk(q)$.

Note that if $q\in \overline{x_1x_2}$ we are done, as $\overline{x_1x_2}\subseteq \bt{x_1,x_2,x_3}\cap \bt{x_1,x_2,x_4}$ so $q$ is not an isolated point in the intersection. Assume this is not the case.

By \cref{triangle in link 1} and \cref{triangle in link 2} it is enough to show that $C_3,C_4$, the essential parts of $\pi_q(\triangle(x_1,x_2,x_3)-q),\pi_q(\triangle(x_1,x_2,x_4)-q)$ in $\lk(q)$ respectively, intersect. 
We will actually show that they intersect on $\overline{\hat{x}_1\hat{x}_2}$. 

Clearly $C_3,C_4$ intersect $\overline{\hat{x}_1\hat{x}_2}$. If for some $i\in\set{3,4}$, $q\in \overline{x_1x_i}\cap\overline{x_2x_i}$ we have by \cref{homotopies in link} that $\overline{\hat{x}_1\hat{x}_2}\subseteq C_i$ and so $C_3\cap C_4\neq \emptyset$ and we are done. So assume $q\notin  \overline{\hat{x}_1\hat{x}_3}\cap  \overline{\hat{x}_2\hat{x}_3}, \overline{\hat{x}_1\hat{x}_4}\cap  \overline{\hat{x}_2\hat{x}_4}$.
In this case, again by \cref{homotopies in link}, each of $C_3,C_4$ is a cycle or an arc depending on whether $q\in \triangle(x_1,x_2,x_i)$ or not.

If $C_3$ is a cycle, then from \cref{triangles are spiky in link} $C_3=\triangle(\hat{z}_1,\hat{z}_2,\hat{z}_3)$ where $\hat{z}_i$ is the point closest to $\hat{x}_i$ on $C_3$, that is it is the point furthest away from $\hat{x}_i$ on $\overline{\hat{x}_i\hat{x}_j}\cap \overline{\hat{x}_i\hat{x}_k}$ where $\set{i,j,k}=\set{1,2,3}$.

Similarly if $C_4$ is a cycle then $C_4=\triangle(\hat{w}_1,\hat{w}_2,\hat{w}_4)$ where $\hat{w}_i$ is the point closest to $\hat{x}_i$ on $C_4$, that is it is the point furthest away from $\hat{x}_i$ on $\overline{\hat{x}_i\hat{x}_j}\cap \overline{\hat{x}_i\hat{x}_k}$ where $\set{i,j,k}=\set{1,2,4}$.

If $C_3$ is an arc then $q\in \overline{x_1x_3}$ or $\overline{x_2x_3}$ and $C_3=\overline{\hat{x}_1\hat{z}_3}\cup \overline{\hat{x}_1\hat{z}_3}$ where $\hat{z}_3$ is defined as above.
Similarly if $C_4$ is an arc $C_4=\overline{\hat{x}_1\hat{w}_4}\cup \overline{\hat{x}_1\hat{w}_4}$

In all cases $C_i\cap \overline{\hat{x_1}\hat{x_2}}$ is non-trivial and connected, hence an arc (perhaps degenerate, i.e. a point).

Assume towards contradiction that $C_3,C_4$ do not intersect on $\overline{\hat{x}_1\hat{x}_2}$, that is $C_3\cap \overline{\hat{x}_1\hat{x}_2},C_4\cap \overline{\hat{x}_1\hat{x}_2}$ are two disjoint arcs.
Without loss of generality $C_3\cap \overline{\hat{x}_1\hat{x}_2}$ is closer to $\hat{x}_1$ than $C_4\cap \overline{\hat{x}_1\hat{x}_2}$. The endpoint of $C_3\cap \overline{\hat{x}_1\hat{x}_2}$ furthest from $\hat{x}_1$ is $\hat{z}_2$ and the endpoint of $C_4\cap \overline{\hat{x}_1\hat{x}_2}$ furthest from $\hat{x}_2$ is $\hat{w}_1$, as in \cref{fig:tr}. By assumption $\overline{\hat{x}_1\hat{z}_2}\cap \overline{\hat{w}_1\hat{x}_2}=\emptyset$.

\begin{figure}[H]
    \centering
    \includegraphics{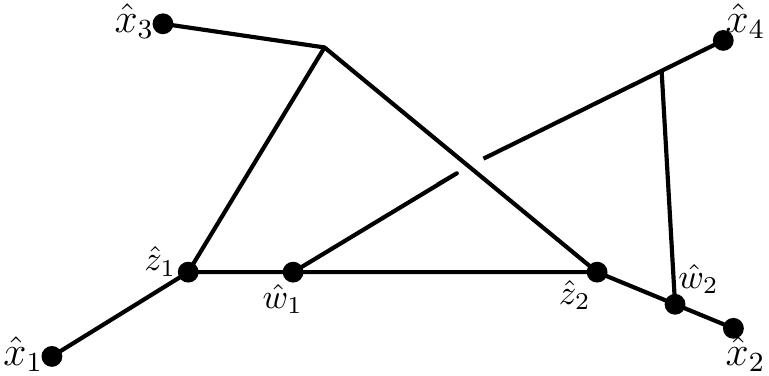}
    \caption{$\triangle(\hat{x}_1,\hat{x}_2,\hat{x}_3),\triangle(\hat{x}_1,\hat{x}_2,\hat{x}_4)$ in $\lk(q)$}
    \label{fig:tr}
\end{figure}

Let $\alpha=d(\hat{w}_1,\hat{x_2})$ and $\beta=d(\hat{z}_2,\hat{x_1})$.

Note that:

\begin{itemize}
    \item $d(\hat{w}_1,\hat{x}_4)\geq \pi-\alpha$.
    
    This is because:
    \begin{itemize}
        \item If $q\notin \overline{x_2x_4}$ then the concatenation of $\overline{\hat{x_2}\hat{w}_1},\overline{\hat{w}_1\hat{x_4}},\overline{\hat{x_4}\hat{x_2}}$ includes an essential cycle, so is of length $\geq 2\pi$, but $\overline{\hat{x}_2\hat{x}_4}$ is of length $<\pi$ by \cref{path in link}, so $\alpha+d(\hat{w}_1,\hat{x}_4)\geq 2\pi-d(\hat{x}_2,\hat{x}_4)>\pi$ and this gives us what we want.
        
        \item If $q\in \overline{x_2x_4}$ we can choose a path $\hat{\pi}$ of length $\pi$ between $\hat{x}_4,\hat{x}_2$ and the concatenation of $\overline{\hat{x}_2\hat{w}_1},\overline{\hat{w}_1\hat{x}_4},\hat{\pi}$ either contains a cycle and then, as in the previous case, we get what we want or is fully backtracking and then $\alpha+d(\hat{w}_1,\hat{x}_4)\geq\pi$.
    \end{itemize}
    \item $d(\hat{x}_1,\hat{w}_1) + d(\hat{w}_1,\hat{x}_4) < \pi$.
    
    This is because the concatenation of $\overline{\hat{x}_1\hat{w}_1},\overline{\hat{w}_1\hat{x}_4}$ is exactly $\overline{\hat{x}_1\hat{x}_4}$, by choice of $\hat{w}_1$, so we are done by \cref{path in link}.
\end{itemize}  

From the second point we have $\beta=d(\hat{x}_1,\hat{z}_2) \leq d(\hat{x}_1,\hat{w}_1) < \pi-d(\hat{w}_1,\hat{x}_4)$, and then from the first point $\beta<\alpha$. But of course, this argument was symmetric so we also have $\alpha<\beta$ which is a contradiction.
\end{proof}

\section{Proof of Theorem for lanky quadrilateral case}
\label{sec: 1d case}
The statement we will prove in this section is the second part of \cref{main theorem}:
\begin{theorem} \label{1-dim case}
Assume $x_1,x_2,x_3,x_4\in X$ such that $\overline{x_1x_2}\cap \overline{x_3x_4}$ is a segment, then $\bigcap_{1\leq i<j<k\leq 4}\bt{x_i,x_j,x_k}=\overline{x_1x_2}\cap \overline{x_3x_4}$ 
\end{theorem}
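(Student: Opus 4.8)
The plan is to prove the two inclusions of $\bigcap_{1\le i<j<k\le 4}\bt{x_i,x_j,x_k}=\overline{x_1x_2}\cap\overline{x_3x_4}=:I$ separately. Throughout I write $a,b$ for the endpoints of the nondegenerate segment $I$, labelled so that $\overline{x_1x_2}$, traversed from $x_1$ to $x_2$, meets $a$ before $b$; thus $d(x_1,x_2)=d(x_1,a)+d(a,b)+d(b,x_2)$. The inclusion $I\subseteq\bigcap\bt{x_i,x_j,x_k}$ is immediate: every triple $\{i,j,k\}\subseteq\{1,2,3,4\}$ contains $\{1,2\}$ or $\{3,4\}$, so $\overline{x_1x_2}$ or $\overline{x_3x_4}$ is a side of $\triangle(x_i,x_j,x_k)$, hence $I\subseteq\overline{x_1x_2}\cup\overline{x_3x_4}\subseteq\triangle(x_i,x_j,x_k)\subseteq\bt{x_i,x_j,x_k}$.

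For the reverse inclusion I would invoke \cref{discreteness}: since $x_1,\dots,x_4$ form a lanky quadrilateral, $\bigcap_{i<j<k}\bt{x_i,x_j,x_k}=F\cup\bigcup_{\{i,j,k,l\}=\{1,2,3,4\}}\big(\overline{x_ix_j}\cap\overline{x_kx_l}\big)$ for some finite set $F$. As $I$ is one of the three sets appearing in the union, it then remains to establish two things: \textbf{(A)} $\overline{x_1x_3}\cap\overline{x_2x_4}\subseteq I$ and $\overline{x_1x_4}\cap\overline{x_2x_3}\subseteq I$, so that the union is exactly $I$; and \textbf{(B)} $F\subseteq I$.

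Step (A) is a short metric computation. Up to swapping $x_3\leftrightarrow x_4$ we may assume $\overline{x_3x_4}$, traversed from $x_3$, also meets $a$ before $b$, so $d(x_3,x_4)=d(x_3,a)+d(a,b)+d(b,x_4)$. If $q\in\overline{x_1x_3}\cap\overline{x_2x_4}$ then $d(x_1,q)+d(q,x_3)=d(x_1,x_3)$ and $d(x_2,q)+d(q,x_4)=d(x_2,x_4)$, so
\[
d(x_1,x_2)+d(x_3,x_4)\ \le\ [d(x_1,q)+d(x_2,q)]+[d(x_3,q)+d(x_4,q)]\ =\ d(x_1,x_3)+d(x_2,x_4).
\]
On the other hand $d(x_1,x_3)\le d(x_1,a)+d(a,x_3)$ and $d(x_2,x_4)\le d(x_2,b)+d(b,x_4)$, and substituting $d(x_1,a)=d(x_1,x_2)-d(a,b)-d(b,x_2)$ and $d(a,x_3)=d(x_3,x_4)-d(a,b)-d(b,x_4)$ gives $d(x_1,x_3)+d(x_2,x_4)\le d(x_1,x_2)+d(x_3,x_4)-2d(a,b)$, contradicting $d(a,b)>0$; hence $\overline{x_1x_3}\cap\overline{x_2x_4}=\emptyset$. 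Running the same bookkeeping for $q\in\overline{x_1x_4}\cap\overline{x_2x_3}$ (now using $d(a,x_4)=d(a,b)+d(b,x_4)$ and $d(b,x_3)=d(a,b)+d(a,x_3)$) produces instead $d(x_1,x_4)+d(x_2,x_3)\le d(x_1,x_2)+d(x_3,x_4)$; combined with the displayed inequality this forces equality everywhere, so in particular $d(x_1,q)+d(q,x_2)=d(x_1,x_2)$ and $d(x_3,q)+d(q,x_4)=d(x_3,x_4)$, i.e. $q\in\overline{x_1x_2}\cap\overline{x_3x_4}=I$.

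Step (B) is where I expect the real work to be. By (A) we now have $\bigcap\bt{x_i,x_j,x_k}=F\cup I$; since $I$ is closed and $F$ finite, any point of this set outside $I$ would be isolated in it, so it suffices to rule out isolated points of $\bigcap\bt{x_i,x_j,x_k}$ outside $I$. Let $p$ be such a point. By the proof of \cref{discreteness} (via \cref{2-cell} and \cref{one cell}) $p$ lies in $X^{(0)}$ or in the interior of a $1$-cell crossed transversely by a side, so $\lk(p)$ is a metric graph of girth $\ge 2\pi$ and I would analyse the positions of $\hat x_1,\hat x_2,\hat x_3,\hat x_4$ there. Since $p\notin I$ we have, after relabelling, $p\notin\overline{x_3x_4}$; using $a,b\in\overline{x_1x_2}\cap\overline{x_3x_4}$ with $a,b\neq p$, and distinguishing cases according to which of the geodesics $\overline{px_i}$ run through $a$ and which through $b$, the aim is to show that in $\lk(p)$ two of $\hat x_\alpha,\hat x_\beta,\hat x_\gamma$ coincide — or more generally that the essential part of $\pi_p(\triangle(x_\alpha,x_\beta,x_\gamma)-p)$ degenerates to a backtracking loop — for some triple; by \cref{triangle in link 1}, \cref{triangle in link 2} and \cref{homotopies in link} this gives $p\notin\bt{x_\alpha,x_\beta,x_\gamma}$, a contradiction. (One could alternatively try to feed $p$ into \cref{intersection of two triangles} for the side $\overline{x_1x_2}$ and for the side $\overline{x_3x_4}$ and argue that the resulting accumulation directions in $\lk(p)$ are forced to meet because $\overline{x_1x_2}$ and $\overline{x_3x_4}$ share $\overline{ab}$.) The delicate point is that, as the example following \cref{path in link} shows, $\pi_p(a)$ need not lie on the geodesic $\overline{\hat x_1\hat x_2}$ in $\lk(p)$, so one cannot simply transfer the configuration of $x_1,\dots,x_4$ to $\lk(p)$; pinning down which $\overline{px_i}$ actually pass through $a$ or $b$ is what makes a genuine case analysis unavoidable.
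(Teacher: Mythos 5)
Your inclusion $I\subseteq\bigcap\bt{x_i,x_j,x_k}$ and your Step~(A) are correct; (A) is essentially the paper's ``square or butterfly'' lemma (\cref{square or butterfly}), just proved by a slightly different but equivalent bookkeeping (the paper sums four triangle inequalities symmetrically and notes all must be equalities, whereas you split into two sub-cases and show one pairwise intersection is empty and the other is contained in $I$ by forced equality; both work).

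The genuine gap is Step~(B), which you acknowledge you have not carried out. The link analysis you sketch is the hard direction, and your parenthetical alternative also does not close it as stated: \cref{intersection of two triangles} tells you that $p$ is non-isolated in $\bt{x_1,x_2,x_3}\cap\bt{x_1,x_2,x_4}$ and non-isolated in $\bt{x_1,x_3,x_4}\cap\bt{x_2,x_3,x_4}$, but neither of those on its own implies $p$ is non-isolated in the \emph{quadruple} intersection $\bigcap_{i<j<k}\bt{x_i,x_j,x_k}$, which is what you need to absorb the finite set $F$ into $I$. The missing idea is exactly the paper's \cref{intersection}: in the lanky-quadrilateral situation one can show
\[
\bigcap_{1\le i<j<k\le 4}\bt{x_i,x_j,x_k}=\bt{x_1,x_3,x_4}\cap\bt{x_2,x_3,x_4},
\]
i.e.\ the quadruple intersection already equals a single pairwise intersection of two full triangles sharing the side $\overline{x_3x_4}$. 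This identity is what turns \cref{intersection of two triangles} into a usable ``no isolated points'' statement for the quadruple intersection, and it is proved by a structural argument (writing $\overline{x_4x_1}=\overline{x_4y_1}\cup\overline{y_1x_1}$ etc.\ and decomposing each $\bt{x_i,x_j,x_k}$ as a smaller full triangle with a spike, then invoking \cref{square or butterfly}), not by a link-of-$p$ case analysis. Without something playing the role of \cref{intersection}, your Step~(B) does not go through, so the proof as proposed is incomplete.
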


The situation described in the theorem corresponds to triangles whose union is as in \cref{fig:1-dim case triangles}.

\begin{figure}[H]
    \centering
    \includegraphics[]{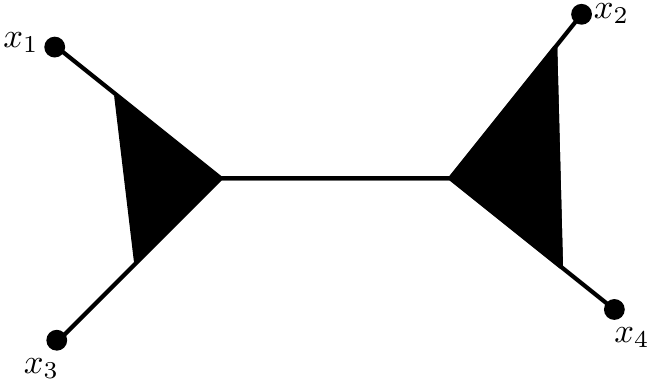}
    \caption{$\bigcup_{1\leq i<j<k\leq 4}\bt{x_i,x_j,x_k}$ when $x_1,x_2,x_3,x_4$ form a lanky quadrilateral}
    \label{fig:1-dim case triangles}
    \end{figure}

\begin{lemma} \label{square or butterfly}
Given $x_1,x_2,x_3,x_4\in X$, if $\overline{x_1x_2}\cap \overline{x_3x_4}\neq \emptyset$ then 
\[
    \overline{x_1x_3}\cap \overline{x_2x_4},
    \overline{x_1x_4}\cap \overline{x_2x_3}\subseteq \overline{x_1x_2}\cap \overline{x_3x_4},
    \]
hence $\overline{x_ix_j}\cap \overline{x_kx_l}= \overline{x_{i'}x_{j'}}\cap \overline{x_{k'}x_{l'}}$ for $\set{i,j,k,l}=\set{i',j',k',l'}=\set{1,2,3,4}$, when both intersections are non-empty.
\end{lemma}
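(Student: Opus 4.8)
The statement is really two assertions: first, that the ``diagonal'' intersections $\overline{x_1x_3}\cap\overline{x_2x_4}$ and $\overline{x_1x_4}\cap\overline{x_2x_3}$ are contained in $\overline{x_1x_2}\cap\overline{x_3x_4}$ whenever the latter is non-empty; and second, the symmetric consequence that any two non-empty opposite-side intersections coincide. The second part follows from the first by a clean symmetry argument, so the real work is proving the containment. The plan is to argue in the CAT(0) space $X$ using convexity of the distance function and the fact that geodesics are unique, exploiting that $\overline{x_1x_2}\cap\overline{x_3x_4}$ is a non-empty (possibly degenerate) segment by the standard fact that two geodesics in a CAT(0) space meet in a segment (as used in the proof of \cref{geodesic triangle is spiky}).

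\textbf{Main step: the containment.} Fix a point $s\in\overline{x_1x_2}\cap\overline{x_3x_4}$ — or better, work with the whole segment $\sigma:=\overline{x_1x_2}\cap\overline{x_3x_4}$, which is a subsegment of both $\overline{x_1x_2}$ and $\overline{x_3x_4}$. Let $p\in\overline{x_1x_3}\cap\overline{x_2x_4}$ (the other diagonal is symmetric, swapping the roles of $x_3$ and $x_4$). I want to show $p\in\sigma$. The idea is to build a geodesic quadrilateral/triangle configuration and use that $p$ lies on two sides simultaneously. Concretely: since $\sigma\subseteq\overline{x_1x_2}$, write the endpoints of $\sigma$ as $a,b$ with $a$ between $x_1$ and $b$, and $b$ between $a$ and $x_2$; likewise $\sigma\subseteq\overline{x_3x_4}$, so (after possibly relabelling $x_3\leftrightarrow x_4$) $a$ is the endpoint of $\sigma$ closer to $x_3$ and $b$ the one closer to $x_4$. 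Then $\overline{x_1x_2}=\overline{x_1a}\cup\sigma\cup\overline{bx_2}$ and $\overline{x_3x_4}=\overline{x_3a}\cup\sigma\cup\overline{bx_4}$, and by uniqueness of geodesics in CAT(0) these subpaths are themselves geodesics that only overlap along $\sigma$ (the four ``legs'' $\overline{x_1a},\overline{bx_2},\overline{x_3a},\overline{bx_4}$ pairwise meet $\sigma$ only at $a$ or $b$, essentially by the spiky-triangle reasoning of \cref{geodesic triangle is spiky} applied to the degenerate triangles involved). Now consider $\overline{x_1x_3}$: I claim it must pass through $\sigma$. Indeed, $\overline{x_1x_3}$ is the unique geodesic from $x_1$ to $x_3$; but $\overline{x_1a}$ followed by $\overline{ax_3}$ is a path from $x_1$ to $x_3$, and since $\overline{x_1a}\subseteq\overline{x_1x_2}$ and $\overline{ax_3}\subseteq\overline{x_3x_4}$, and these meet only at $a$, this concatenation is an injective path; by uniqueness of geodesics, $\overline{x_1x_3}=\overline{x_1a}\cup\overline{ax_3}$, so $a\in\overline{x_1x_3}$ but (generically) no other point of $\sigma$. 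Hmm — so $\overline{x_1x_3}\cap\sigma=\{a\}$, and symmetrically $\overline{x_2x_4}\cap\sigma=\{b\}$, and $\overline{x_1x_4}\cap\sigma$, $\overline{x_2x_3}\cap\sigma$ are $\{a\text{ or }b\}$-type singletons too. That forces: if $p\in\overline{x_1x_3}\cap\overline{x_2x_4}$ then... I need $p$ to be in both, and those intersect $\sigma$ at the single points $a$ and $b$ respectively. The cleanest route: show $\overline{x_1x_3}\cap\overline{x_2x_4}\subseteq\{a\}\cup(\text{stuff off }\sigma)$, and then argue the off-$\sigma$ part is empty using convexity — precisely, if $p\in\overline{x_1x_3}=\overline{x_1a}\cup\overline{ax_3}$ and $p\in\overline{x_2x_4}=\overline{x_2b}\cup\overline{bx_4}$, then $p$ lies on one leg from each decomposition; checking the (at most four) cases, the segments $\overline{x_1a},\overline{ax_3},\overline{x_2b},\overline{bx_4}$ are pairwise disjoint except that $\overline{x_1a}\cap\overline{ax_3}=\{a\}$-type or are separated — and in every case the only common point possible is $a$ (or $b$), which lies in $\sigma$. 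So $p\in\sigma=\overline{x_1x_2}\cap\overline{x_3x_4}$, as desired. The argument for $\overline{x_1x_4}\cap\overline{x_2x_3}$ is identical after relabelling.

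\textbf{The symmetric consequence.} For the last sentence, suppose $\overline{x_ix_j}\cap\overline{x_kx_l}$ and $\overline{x_{i'}x_{j'}}\cap\overline{x_{k'}x_{l'}}$ are both non-empty, where each of the two partitions of $\{1,2,3,4\}$ into two pairs is one of the three possible ones. If the two partitions are equal there is nothing to prove; otherwise they are two distinct partitions, and one checks that the first containment (applied with the roles suitably permuted) gives $\overline{x_{i'}x_{j'}}\cap\overline{x_{k'}x_{l'}}\subseteq\overline{x_ix_j}\cap\overline{x_kx_l}$ and, by symmetry of the hypothesis, also the reverse inclusion, hence equality.

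\textbf{Expected main obstacle.} The delicate point is the case analysis establishing that the ``legs'' $\overline{x_1a}$, $\overline{ax_3}$, $\overline{bx_2}$, $\overline{bx_4}$ are genuinely in convex/disjoint position away from $\sigma$, so that a point on two diagonals is forced onto $\sigma$ rather than floating off on a spike — this is where one must invoke uniqueness of geodesics in CAT(0) carefully, and handle the degenerate sub-cases (e.g. $\sigma$ a single point, or one of $x_1a$ etc. degenerate). Once that position statement is nailed down, the inclusion is immediate. I would isolate the position statement as the one computation to do carefully and treat the rest as routine.
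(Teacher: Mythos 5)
Your proof has a genuine gap at its central step. You claim that since $\overline{x_1a}\cup\overline{ax_3}$ is an injective path from $x_1$ to $x_3$, uniqueness of geodesics forces $\overline{x_1x_3}=\overline{x_1a}\cup\overline{ax_3}$. But uniqueness of geodesics says only that there is at most one geodesic between two points — it says nothing about arbitrary injective paths being geodesics. The concatenation you build is almost never a geodesic. A concrete counterexample in the Euclidean plane (which is a CAT(0) polygonal complex after triangulating): take $x_1=(0,0)$, $x_2=(2,0)$, $x_3=(1,1)$, $x_4=(1,-1)$. Then $\overline{x_1x_2}\cap\overline{x_3x_4}=\{a\}$ with $a=(1,0)$, and your concatenation $\overline{x_1a}\cup\overline{ax_3}$ is an L-shaped path, while the actual geodesic $\overline{x_1x_3}$ is the straight segment from $(0,0)$ to $(1,1)$. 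Everything downstream — the claims $\overline{x_1x_3}\cap\sigma=\{a\}$, $\overline{x_2x_4}\cap\sigma=\{b\}$, and the four-case leg analysis — inherits this error, so the containment is not established.

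The paper avoids this by taking the opposite logical direction: rather than trying to constrain where the geodesic $\overline{x_1x_3}$ goes, it fixes a point $q\in\overline{x_1x_3}\cap\overline{x_2x_4}$ and a point $p\in\overline{x_1x_2}\cap\overline{x_3x_4}$, and writes down four triangle inequalities for the lengths of the eight subsegments created by $p$ and $q$ on the four geodesics (namely $d(x_1,q)+d(q,x_2)\le d(x_1,p)+d(p,x_2)$ and three analogues obtained by cyclic permutation). Summing forces all four to be equalities, and equality in the triangle inequality plus uniqueness of geodesics then shows that $\overline{x_1q}\cup\overline{qx_2}$ is the geodesic $\overline{x_1x_2}$ and $\overline{x_3q}\cup\overline{qx_4}$ is $\overline{x_3x_4}$, so $q\in\overline{x_1x_2}\cap\overline{x_3x_4}$. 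This sidesteps any need to know the shape of $\overline{x_1x_3}$; if you want to salvage your write-up, replacing the decomposition argument with this summed-inequality argument is the way to do it.
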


\begin{proof}
We will prove $\overline{x_1x_3}\cap \overline{x_2x_4}\subseteq \overline{x_1x_2}\cap \overline{x_3x_4}$, the proof of the other inclusion is the same up to renaming.

By assumption there exists $p\in\overline{x_1x_2}\cap \overline{x_3x_4}$. Let $q\in\overline{x_1x_3}\cap \overline{x_2x_4}$.

Things are now as in \cref{fig:Figure}, where lines of the same colour are geodesics and the letters in gray are the lengths of each part of the geodesic.

\begin{figure}[htp]
    \centering
    \includegraphics[]{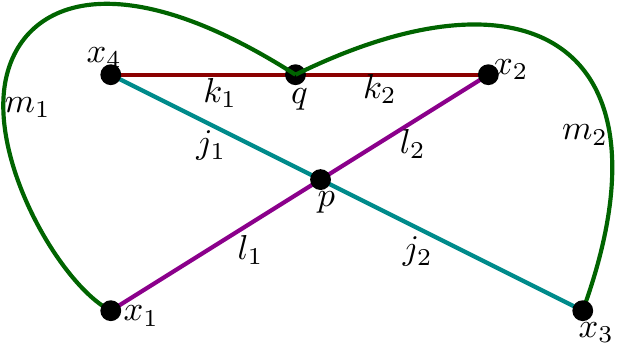}
    \caption{The geodesic segments between $x_i,x_j$}
    \label{fig:Figure}
    \end{figure}

By the triangle inequality, we have the following
\begin{align*}
    l_1+l_2&\le m_1+k_2 \\
    j_1+j_2 & \leq m_2+k_1 \\
    k_1+k_2 &\leq j_1+l_2 \\
    m_1+m_2 &\leq l_1+j_2.
\end{align*}
Summing up these inequalities we get $l_1+l_2+k_1+k_2+m_1+m_2+j_1+j_2\leq l_1+l_2+k_1+k_2+m_1+m_2+j_1+j_2$, so all inequalities are equalities.

Geodesics are unique in a CAT(0) space so we get 
\begin{align*}
    \overline{x_1q}\cup \overline{qx_2}&= \overline{x_1x_2} \\
    \overline{x_4q}\cup\overline{qx_3} & = \overline{x_4x_3}
\end{align*}

In particular $q\in \overline{x_1x_2}\cap\overline{x_3x_4}$.
\end{proof}

\begin{lemma} \label{intersection}
In the conditions of \cref{1-dim case}, $\bt{x_1,x_3,x_4}\cap \bt{x_2,x_3,x_4}$ is equal to $\bigcap_{1\leq i<j<k\leq 4}\bt{x_i,x_j,x_k}$.
\end{lemma}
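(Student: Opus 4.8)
The plan is to show the two inclusions separately, with the nontrivial direction being $\bt{x_1,x_3,x_4}\cap\bt{x_2,x_3,x_4}\subseteq\bigcap_{1\le i<j<k\le 4}\bt{x_i,x_j,x_k}$. One inclusion is immediate: the intersection over all four full triangles is by definition contained in the intersection of any two of them, in particular in $\bt{x_1,x_3,x_4}\cap\bt{x_2,x_3,x_4}$. So the real content is that if a point $p$ lies in both $\bt{x_1,x_3,x_4}$ and $\bt{x_2,x_3,x_4}$, then it automatically lies in $\bt{x_1,x_2,x_3}$ and $\bt{x_1,x_2,x_4}$ as well.

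First I would invoke \cref{square or butterfly}: under the hypothesis of \cref{1-dim case}, $\overline{x_1x_2}\cap\overline{x_3x_4}$ is a nondegenerate segment, say with endpoints $a,b$, and moreover $\overline{x_1x_3}\cap\overline{x_2x_4}$ and $\overline{x_1x_4}\cap\overline{x_2x_3}$ are both contained in $\overline{x_1x_2}\cap\overline{x_3x_4}$. The geometric picture (as in \cref{fig:1-dim case triangles}) is that all four points lie, in a coarse sense, along a common ``axis'' consisting of the segment $[a,b]$ together with the four spikes $\overline{x_1a}$, $\overline{x_2b}$, $\overline{x_3a}$, $\overline{x_4b}$ (after relabelling $a,b$ so $x_1,x_3$ are on the $a$-side and $x_2,x_4$ on the $b$-side). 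In particular, the geodesic triangles $\triangle(x_1,x_3,x_4)$, $\triangle(x_2,x_3,x_4)$, $\triangle(x_1,x_2,x_3)$, $\triangle(x_1,x_2,x_4)$ are all spiky triangles whose ``fat part'' is a subsegment of $[a,b]$, so the corresponding full triangles $\bt{\cdot}$ are in fact just spiky triangles (the triangle part being a segment). This means $\bt{x_i,x_j,x_k}$ is simply $\triangle(x_i,x_j,x_k)$ itself — there is no two-dimensional filling.

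Given this, take $p\in\bt{x_1,x_3,x_4}\cap\bt{x_2,x_3,x_4}=\triangle(x_1,x_3,x_4)\cap\triangle(x_2,x_3,x_4)$. I would analyze where $p$ sits: either $p\in\overline{x_3x_4}$, which is shared by all four triangles, so $p$ lies in all of them and we are done; or $p$ lies on one of the spikes or on $[a,b]$ but off $\overline{x_3x_4}$, and here a short case analysis using the structure above — $p\in\overline{x_1a}\cup[a,b]$ forces $p\in\triangle(x_1,x_2,x_3)\cap\triangle(x_1,x_2,x_4)$ because $\overline{x_1x_2}$ contains $\overline{x_1a}\cup[a,b]\cup\overline{ab_?}$ appropriately, etc. Concretely, $p\in\triangle(x_1,x_3,x_4)$ means $p$ lies on $\overline{x_1x_3}$, $\overline{x_1x_4}$, or $\overline{x_3x_4}$; combining with $p\in\triangle(x_2,x_3,x_4)$ and the containments from \cref{square or butterfly} pins $p$ into a subsegment of each of $\overline{x_1x_2}$, $\overline{x_3x_4}$, hence into every $\triangle(x_i,x_j,x_k)$.

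The main obstacle I anticipate is bookkeeping the relabelling and the several sub-cases cleanly: one must fix an orientation of the axis, decide which of $x_1,x_3$ (resp. $x_2,x_4$) is ``outermost'', and check that in each configuration a point common to two of the triangles really does lie on the geodesics defining the other two — the potential trap is a configuration where, say, $p$ is on the spike $\overline{x_3a}$ but $x_3$ is \emph{not} on $\overline{x_1x_2}$, so one has to argue it is still on $\overline{x_1x_3}$ and $\overline{x_2x_3}$ and then that these force membership in the relevant full triangles. I expect this to be routine once the axis picture is set up rigorously via \cref{square or butterfly} and \cref{geodesic triangle is spiky}, but it is where the care is needed. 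Once the inclusion is established, \cref{intersection} follows, and it feeds directly into the proof of \cref{1-dim case} by reducing the quadruple intersection to the already-understood intersection of two triangles sharing the side $\overline{x_3x_4}$.
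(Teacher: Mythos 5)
The crucial step in your argument---that in the lanky-quadrilateral case every full triangle $\bt{x_i,x_j,x_k}$ collapses to the geodesic triangle $\triangle(x_i,x_j,x_k)$, so that ``there is no two-dimensional filling''---is neither justified nor true. The hypothesis that $\overline{x_1x_2}\cap\overline{x_3x_4}=[y_1,y_2]$ is a nondegenerate segment forces (by the overlap argument in the paper's proof, and as recorded in \cref{square or butterfly}) the geodesics $\overline{x_1x_4}$ and $\overline{x_2x_3}$ to run through $[y_1,y_2]$, but it puts no constraint on $\angle_{y_1}(x_1,x_3)$ or $\angle_{y_2}(x_2,x_4)$. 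If, say, $\angle_{y_1}(x_1,x_3)<\pi$, then $\overline{x_1x_3}$ does not pass through $y_1$; the geodesic triangle $\triangle(x_1,x_3,x_4)$ then has a genuine Jordan-curve part $\triangle(x_1,x_3,y_1)$ with a spike $\overline{y_1x_4}$ attached, and by \cref{triangles are discs} together with \cref{homotopies in link} and \cref{triangle in link 2} the full triangle $\bt{x_1,x_3,x_4}$ is two-dimensional near $y_1$ (locally the cone over the arc $\overline{\hat{x}_1\hat{x}_3}\subset\lk(y_1)$). So your ``common axis with four spikes'' picture does not reduce the full triangles to $1$-complexes, and your subsequent case analysis---which only tracks points on spikes and on $[a,b]$---misses exactly the points in the interior of these $2$-dimensional pieces, which is where all the work lies.

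The paper's proof does not make this reduction. Instead it uses the overlap of $\overline{x_4y_1}$ with $\overline{y_2x_1}$ (and its analogues) to write each full triangle as a small full triangle plus a spike, namely $\bt{x_1,x_i,x_3}=\overline{x_iy_1}\cup\bt{y_1,x_1,x_3}$ for $i\in\{2,4\}$ and $\bt{x_2,x_j,x_4}=\overline{x_jy_2}\cup\bt{y_2,x_2,x_4}$ for $j\in\{1,3\}$, with $\bt{y_1,x_1,x_3}$ and $\bt{y_2,x_2,x_4}$ allowed to be $2$-dimensional. It then controls $\bt{y_1,x_1,x_3}\cap\bt{y_2,x_2,x_4}$ by a link-projection argument ($\pi_p(x_2)=\pi_p(x_4)$ for $p\in\bt{y_1,x_1,x_3}$, forcing $p\notin\bt{y_2,x_2,x_4}\smallsetminus\triangle(y_2,x_2,x_4)$), reducing to an intersection of geodesic triangles and then invoking \cref{square or butterfly}. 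If you want to repair your proposal you would need to supply an argument of this kind for the interior points of the $2$-dimensional pieces rather than asserting those pieces are absent.
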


\begin{proof}
It suffices to show $\bt{x_1,x_3,x_4}\cap \bt{x_2,x_3,x_4}\subseteq \bigcap_{1\leq i<j<k\leq 4}\bt{x_i,x_j,x_k}$.

 Denote by $y_1,y_2$ the two endpoints of $\overline{x_1x_2}\cap\overline{x_3x_4}$ such that $y_1$ is closer to $x_1$ than to $x_2$ and. Without loss of generality, $y_1$ is also closer to $x_3$ than to $x_4$, as in \cref{fig:1-dim case} .

\begin{figure}[htp]
    \centering
    \includegraphics{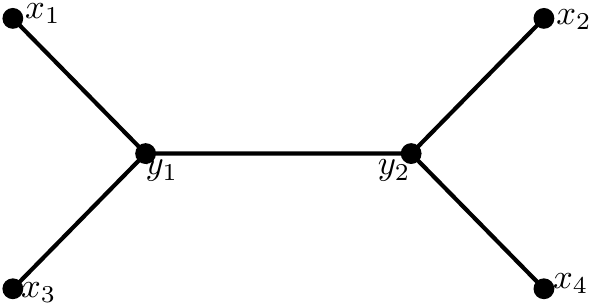}
    \caption{$\overline{x_1x_2}\cup\overline{x_3x_4}$}
    \label{fig:1-dim case}
    \end{figure}

Note that $\overline{x_4x_1}=\overline{x_4y_1}\cup\overline{y_1x_1}$ because $\overline{x_4y_1},\overline{y_2x_1}$ are both geodesics that intersect on an interval. Similarly $\overline{x_2x_3}=\overline{x_2y_1}\cup\overline{y_1x_3}$. Hence for $i\in\set{2,4}$ we have $\bt{x_1,x_i,x_3}=\overline{x_iy_1}\cup \bt{y_1,x_1,x_3}$ and similarly for $j\in \set{1,3}$ we have $\bt{x_2,x_j,x_4}=\overline{x_jy_2}\cup \bt{y_2,x_2,x_4}$. 
So \[\bt{x_1,x_3,x_4}\cap \bt{x_2,x_3,x_4}=(\overline{x_4y_1}\cup \bt{y_1,x_1,x_3})\cap (\overline{x_3y_2}\cup \bt{y_2,x_2,x_4}).\]

It is enough to show $\bt{y_1,x_1,x_3})\cap \bt{y_2,x_2,x_4})\subseteq \overline{x_1x_2}\cap \overline{x_3x_4}$. If $p\in \bt{x_1,y_1,x_3}$, then $\pi_p(x_2)=\pi_p(x_4)$, so $p\notin \bt{y_2,x_1,x_4}-\triangle(y_2,x_1,x_4)$. Similarly interchanging the roles of the triangles. So \begin{align*}
    \bt{y_1,x_1,x_3})\cap \bt{y_2,x_2,x_4})&=\triangle(y_1,x_1,x_3)\cap \triangle(y_2,x_2,x_4) \\
    &\subseteq (\overline{x_1x_3}\cap \overline{x_2x_4}) \cup
    (\overline{x_1x_4}\cap \overline{x_2x_3}),
\end{align*} which is in $\subseteq \overline{x_1x_2}\cap \overline{x_3x_4}$ by \cref{square or butterfly}.
\end{proof}

Now for the proof of the theorem. 

\begin{proof}[Proof of \cref{1-dim case}]
Denote $T_1=\bt{x_1,x_3,x_4},T_2=\bt{x_2,x_3,x_4}$.
By \cref{intersection} $T_1\cap T_2=\bigcap 
 _{1\leq i<j<k\leq 4}\bt{x_i,x_j,x_k}$, which by \cref{discreteness} is equal to $\bigcup_{\set{i,j,k,l}=\set{1,2,3,4}} (\overline{x_ix_j}\cap \overline{x_kx_l})$ union a finite set of points. By \cref{intersection of two triangles} $T_1\cap T_2$ has no isolated points, thus $T_1\cap T_2=\bigcup_{\set{i,j,k,l}=\set{1,2,3,4}} (\overline{x_ix_j}\cap \overline{x_kx_l})$ and by \cref{square or butterfly}, all none empty segments among these coincide.

\end{proof}

\section{Near-immersions}
\label{sec: near-immersions}

Recall \cref{near-immersion}. We rewrite the definition for 2-dimensional complexes.

\begin{definition}
A cellular map $f:A\to B$ between 2-dimensional polygonal complexes is a \textit{near-immersion} if it is injective on each cell and
there are no \emph{twin cells} - different 2-cells $t,t'$ sharing a 1-cell such that $f(t)=f(t')$. 
\end{definition}

\begin{theorem}\label{local link}
Let $X$ be a CAT(0) polygonal complex, and let $\Gamma$ be an embedded polygonal curve with ordered vertices $x_1,...,x_n$ such that $\kappa(\Gamma)<4\pi$. Assume $f:\Delta \to X$ is a near-immersion, where $\Delta$ is a finite $\Delta$-complex homeomorphic to a disc and $f|_{\partial \Delta} : \partial \Delta \to \Gamma$ is a homeomorphism, then $f$ is injective.

In particular, $\Gamma=\triangle(x,y,z)$ satisfies the conclusion of the theorem when $x,y,z\in X$ are strongly non-collinear, as in this case $\kappa(\Gamma)<4\pi$.
\end{theorem}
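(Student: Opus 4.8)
The plan is to argue by contradiction using a combinatorial Gauss--Bonnet count on $\Delta$. Since $f$ is cellular and injective on each cell, I can pull back the Euclidean metric: declare each $2$-cell $t$ of $\Delta$ isometric to $f(t)\subseteq X$ via $f$. These metrics agree on shared edges (an edge of $\Delta$ maps to a fixed edge of $X$, of fixed length), so $\Delta$ becomes a piecewise-Euclidean complex on which $f$ is a local isometry on every closed cell; in particular the angle of $\Delta$ at any corner equals the angle in $X$ of its image corner. With this metric the combinatorial Gauss--Bonnet identity reads
\[
\sum_{F}\kappa(F)+\sum_{v\in\inner\Delta\cap\Delta^{(0)}}\bigl(2\pi-\angle_v\bigr)+\sum_{v\in\partial\Delta\cap\Delta^{(0)}}\bigl(\pi-\angle_v\bigr)=2\pi\chi(\Delta)=2\pi ,
\]
where $\angle_v$ is the length of $\lk_\Delta(v)$ in the pulled-back metric (the sum of the angles of the $2$-cells at $v$), and $\kappa(F)=0$ for every face because faces are Euclidean polygons.

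The near-immersion hypothesis (injective on cells, no twin cells) translates, via the standard computation in links, into: for every vertex $v$ of $\Delta$ the induced map $\lk_\Delta(v)\to\lk_X(f(v))$ is an immersion of graphs. For an interior $v$, $\lk_\Delta(v)$ is a circle, so its image is a non-backtracking closed walk, hence essential in $\lk_X(f(v))$; by the link condition (\cref{link condition}) its length is $\ge 2\pi$, so $\angle_v\ge 2\pi$ and the $v$-summand is $\le 0$. The crucial refinement: if this immersed loop is \emph{not} embedded it revisits some vertex $u$, and then each of the (at least two) return arcs at $u$ is an essential loop of length $\ge 2\pi$, so $\angle_v\ge 4\pi$ and the $v$-summand is $\le -2\pi$. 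For a boundary $v$, $\lk_\Delta(v)$ is an immersed arc between the two directions of $\Gamma$ at $f(v)$, so $\angle_v\ge\angle_\Gamma(f(v))$, with the analogous gain $\angle_v\ge\angle_\Gamma(f(v))+2\pi$ when the arc is not embedded. Finally, since $f|_{\partial\Delta}$ is injective distinct boundary vertices map to distinct points of $\Gamma$, and since $f$ is cellular every corner of $\Gamma$ is the image of a boundary vertex; hence $\sum_{v\in\partial\Delta\cap\Delta^{(0)}}\bigl(\pi-\angle_\Gamma(f(v))\bigr)\le\kappa(\Gamma)$.

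It remains to show that a non-injective near-immersion must display one of these non-embedded links. I would do this by contradiction: if every link map is an embedding, then $f$ is injective on each closed star, and together with local injectivity off $\Delta^{(0)}$ this makes $f$ a local homeomorphism onto $f(\Delta)$; being proper it is then a covering map onto $f(\Delta)$, so $\Delta$ is the universal cover of $f(\Delta)$ and the deck group $\pi_1(f(\Delta))$ is finite and acts freely on the disc $\Delta$, forcing (Brouwer/Lefschetz) the deck group to be trivial, i.e. $f$ is a homeomorphism --- contradicting non-injectivity. So some vertex $v_0$ has a non-embedded link. Plugging the improved estimate at $v_0$ (a gain of $2\pi$, whether $v_0$ lies in $\inner\Delta$ or on $\partial\Delta$) into the Gauss--Bonnet identity, together with the bounds $\le 0$ for the remaining interior vertices and $\sum(\pi-\angle_v)\le\kappa(\Gamma)$ for the boundary vertices, yields $\kappa(\Gamma)\ge 4\pi$, contradicting the hypothesis; hence $f$ is injective. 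For the final assertion: when $x,y,z$ are strongly non-collinear the three sides of $\triangle(x,y,z)$ are geodesics meeting pairwise only at the shared endpoint, so $\triangle(x,y,z)$ is an embedded polygonal curve whose only corners are $x,y,z$, each with turning in $[0,\pi)$, whence $\kappa(\triangle(x,y,z))<3\pi<4\pi$ and the theorem applies.

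The main obstacle I anticipate is the step that localizes the failure of injectivity at a single vertex: making the "local homeomorphism $\Rightarrow$ homeomorphism" argument precise for these non-manifold complexes, and verifying carefully the combinatorial Fary--Milnor input --- that a non-embedded immersed loop (resp. arc) in a graph of girth $\ge 2\pi$ has length $\ge 4\pi$ (resp. $\ge\angle+2\pi$) --- which is exactly what produces the extra $2\pi$ of curvature needed to beat the $4\pi$ threshold.
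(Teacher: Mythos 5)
Your overall strategy (pull back the metric, apply combinatorial Gauss--Bonnet, use girth $\ge 2\pi$ in links, and force $\kappa(\Gamma)\ge 4\pi$) is in the spirit of the paper's proof, and your combinatorial Fary--Milnor input for links is correct: a non-backtracking closed walk in a graph of girth $\ge 2\pi$ that revisits a vertex does have length $\ge 4\pi$ (one sees this by lifting to the universal cover, which is a tree, and noting that the two subloops correspond to elements of the fundamental group whose translation lengths are $\ge 2\pi$). But the crucial localization step --- that a non-injective near-immersion must exhibit a \emph{single} vertex with non-embedded link --- is a genuine gap, and it is the step you flagged yourself. The covering-space argument for it does not go through as written: even if every link map $\lk_\Delta(v)\to\lk_X(f(v))$ is an embedding, $f$ need not be an \emph{open} map onto $f(\Delta)$ (hence not a local homeomorphism onto the image), because $f(\Delta)$ can have cells incident to $f(p)$ that do not lie in the image of the star of $p$ but come from the star of some other preimage $p'$. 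The image $f(\Delta)$ is not a manifold, and the standard ``proper local homeomorphism $\Rightarrow$ covering'' reasoning, as well as the free-action/Brouwer finish, relies on exactly the property that fails here. Without this, you have not shown that the negative curvature must concentrate at a single vertex.

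The paper's proof avoids the localization entirely. It picks a closest pair $p\ne q$ in $\Delta$ with $f(p)=f(q)$, takes the geodesic $\gamma$ between them in the pulled-back CAT(0) metric, and observes $f\circ\gamma$ is a simple closed polygon in $X$. The curvature deficit is then \emph{distributed} along $\gamma$: the key inequality (\cref{inequalities claim}) gives $K_v\le -2T_v$ (resp.\ $k_v\le -2T_v+(\pi-\angle_{x_i}\Gamma)$) at each vertex $v\in\inner(\gamma)$, where $T_v$ is the exterior angle of $f\circ\gamma$ at $f(v)$, and the CAT(0) polygon lemma (\cref{polygon lemma}) gives $\sum_v T_v\ge\pi$. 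Feeding this into Gauss--Bonnet produces the desired contradiction $2\pi<2\pi$ without ever needing a single vertex to carry the whole $2\pi$ of excess. If you want to salvage your route, the cleanest fix is to abandon the single-vertex claim and instead prove the per-vertex inequality $K_v\le -2T_v$ along a suitable geodesic, which is precisely the paper's \cref{inequalities claim}\ref{second point} and \cref{inequalities claim}\ref{third point}.
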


We get the following corollary using \cref{triangle inj implies spiky triangle}.

\begin{corollary} \label{local link for spiky}
    Let $X$ be a CAT(0) polygonal complex, $x,y,z\in X$ and $f:T \to X$ a near-immersion such that $T$ is a full (abstract) spiky triangle and $f|_{\partial T} : \partial T \to \triangle(x,y,z)$ is a homeomorphism.
Then $f$ is injective and its image is $\bt{x,y,z}$.
\end{corollary}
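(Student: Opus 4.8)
The plan is to combine \cref{local link} with \cref{triangle inj implies spiky triangle}. The statement to prove is \cref{local link for spiky}: given a near-immersion $f:T\to X$ from an abstract full spiky triangle with $f|_{\partial T}$ a homeomorphism onto $\triangle(x,y,z)$, show $f$ is injective with image $\bt{x,y,z}$. The natural strategy is first to reduce to the triangle part, then invoke the earlier injectivity results.

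First I would write $T = D \cup I_1 \cup I_2 \cup I_3$ where $D$ is the (full) triangle part of the spiky triangle and $I_1,I_2,I_3$ are the three spikes glued to the corners, so that $\partial D$ is the non-spiky geodesic triangle $\triangle(x',y',z')$ in the notation of \cref{geodesic triangle is spiky} (here $x',y',z'$ are the points at which the pairs of geodesics $\overline{xy},\overline{xz}$, etc., diverge). The restriction $f|_D : D \to X$ is still a near-immersion, and $f|_{\partial D}$ is a homeomorphism onto $\triangle(x',y',z')$. Since $x',y',z'$ are strongly non-collinear (by construction, the three geodesics only overlap along the spikes that have been removed), the curve $\triangle(x',y',z')$ is an embedded polygonal curve with $\kappa < 4\pi$, so \cref{local link} (i.e.\ \cref{immersion}) applies to $f|_D$ and yields that $f|_D$ is injective. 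In particular the triangle part of $T$ is mapped injectively.

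Now \cref{triangle inj implies spiky triangle} applies verbatim: it says precisely that if a map from an abstract spiky triangle to $X$ restricts to a homeomorphism on the boundary onto $\triangle(x,y,z)$ and is injective on the triangle part, then the whole map is injective. This gives injectivity of $f$. For the identification of the image, since $f$ is injective and cellular and $f(\partial T) = \triangle(x,y,z)$, \cref{injective discs} applied to $f|_D$ gives $f(D) = \bt{x',y',z'}$, and then $f(T) = f(\partial T)\cup f(D) = \triangle(x,y,z)\cup \bt{x',y',z'} = \bt{x,y,z}$, exactly as in the last line of the proof of \cref{triangles are discs}.

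There is not really a main obstacle here, since all the work has been done: the only points requiring a small amount of care are (i) checking that removing the spikes genuinely leaves a strongly non-collinear triple $x',y',z'$, so that the curvature hypothesis $\kappa(\triangle(x',y',z'))<4\pi$ of \cref{local link} is met — this is immediate from \cref{geodesic triangle is spiky} and the definition of $x',y',z'$ as the divergence points — and (ii) that restricting a near-immersion to a subcomplex is again a near-immersion, which is clear from the definition since injectivity on cells and absence of twin cells are inherited. Everything else is a direct citation of \cref{local link}, \cref{triangle inj implies spiky triangle}, and \cref{injective discs}.
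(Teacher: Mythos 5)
Your proposal is correct and follows essentially the same route as the paper's own two-line proof: restrict to the triangle part $D$, apply \cref{local link} there, invoke \cref{triangle inj implies spiky triangle} for global injectivity, and then identify the image via \cref{triangles are discs}. The only cosmetic difference is that for the image you re-derive $f(T)=\bt{x,y,z}$ by hand through \cref{injective discs}, whereas the paper simply cites the uniqueness clause of \cref{triangles are discs}, which already packages exactly that computation.
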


\begin{proof}
By \cref{local link}, $f$ is injective on the triangle part of $T$, hence by \cref{triangle inj implies spiky triangle} $f$ is injective. By \cref{triangles are discs} $f(T)=\bt{x,y,z}$.
\end{proof}

We will prove \cref{local link} by arguing that if $f$ is not injective we get a contradiction to Gauss-Bonnet Theorem. 

We will remind the reader of the statement of Gauss-Bonnet.

\begin{definition}
A \emph{pseudo-surface with boundary} is a 2-dimensional simplicial complex $S$ such that every 1-cell is contained in one or two 2-cells. Its \emph{boundary}, $\partial S$, is the union of the 1-cells which are incident to only one 2-cell. Its \emph{interior}, $\inner(S)$, is defined to be $S-\partial S$.
\end{definition}

\begin{notation}
Let $S$ be a piecewise Euclidean pseudo-surface.
For $v\in \inner(S)$ we denote $K_v=2\pi - \sum \alpha$ where the sums run over all angles around $v$ in the 2-cells containing $v$. For $v\in \partial S$, let $i$ be the number of connected component of $\lk(v)$; we denote $k_v = (2-i)\pi - \sum \alpha$ where the sums run over all angles around $v$ in the 2-cells containing $v$.
\end{notation}

\begin{theorem} [\cite{Fansandladders} Theorem 4.6] \label{GB}
Let $S$ be a piecewise Euclidean pseudo-surface. Then,
\[
\sum_{v\in \inner(S)} K_v+\sum_{v\in \partial S} k_v=2\pi \chi (S)
\]
where $\chi (S)$ is the Euler characteristic of $S$.

Note that the sums are finite, as for $v\notin S^{(0)}$ we have that the appropriate summand is 0.
\end{theorem}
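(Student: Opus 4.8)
\textbf{Plan of proof for Theorem~\ref{GB} (Gauss--Bonnet for piecewise Euclidean pseudo-surfaces).}
The strategy is a double-counting argument, organised around the cells of $S$ weighted by angles, completely parallel to the classical combinatorial Gauss--Bonnet formula for surfaces. Since $S$ is a finite complex, all the sums involved are finite, and the only contributions come from the $0$-cells $v\in S^{(0)}$, so it suffices to work with those. First I would record the bookkeeping quantities: let $V,E,F$ be the numbers of $0$-, $1$-, $2$-cells of $S$, and split $V=V_{\mathrm{int}}+V_{\partial}$ and $E=E_{\mathrm{int}}+E_{\partial}$ according to whether a cell lies in the interior or on $\partial S$. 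By definition $\chi(S)=V-E+F$, so $2\pi\chi(S)=2\pi V-2\pi E+2\pi F$. The plan is to show that the left-hand side $\sum_{v\in\inner(S)}K_v+\sum_{v\in\partial S}k_v$ equals exactly this quantity after expanding the definitions of $K_v$ and $k_v$ and regrouping the angle sums around $2$-cells instead of around vertices.

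The key computational step is the interchange of summation. Write $\Sigma:=\sum_v\bigl(\sum_{\text{angles }\alpha\text{ at }v}\alpha\bigr)$, the sum of all corner angles over all $2$-cells of $S$ (each $2$-cell, being a Euclidean polygon with $n$ sides, contributes total interior angle $(n-2)\pi$; summing over all faces and using that the total number of (face, side) incidences equals $2E_{\mathrm{int}}+E_{\partial}$ — each interior edge bounds two faces, each boundary edge bounds one — gives $\Sigma=\pi(2E_{\mathrm{int}}+E_{\partial})-2\pi F$). On the other hand, grouping the same collection of angles by their vertex, $\Sigma=\sum_{v\in\inner(S)}\sum\alpha+\sum_{v\in\partial S}\sum\alpha$. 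Now from $K_v=2\pi-\sum\alpha$ we get $\sum_{v\in\inner(S)}K_v=2\pi V_{\mathrm{int}}-\sum_{v\in\inner(S)}\sum\alpha$, and from $k_v=(2-i_v)\pi-\sum\alpha$, where $i_v$ is the number of connected components of $\lk(v)$, we get $\sum_{v\in\partial S}k_v=2\pi V_\partial-\pi\sum_{v\in\partial S}i_v-\sum_{v\in\partial S}\sum\alpha$. Adding these two and substituting the face-side expression for $\Sigma$ yields
\[
\sum_{v\in\inner(S)}K_v+\sum_{v\in\partial S}k_v
=2\pi V-\pi\sum_{v\in\partial S}i_v-\pi(2E_{\mathrm{int}}+E_{\partial})+2\pi F.
\]

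It then remains to identify $\pi\sum_{v\in\partial S}i_v+\pi(2E_{\mathrm{int}}+E_{\partial})$ with $2\pi E=2\pi E_{\mathrm{int}}+2\pi E_{\partial}$, i.e.\ to prove the combinatorial identity $\sum_{v\in\partial S}i_v=E_\partial$. This is the step I expect to be the main obstacle and the one requiring genuine (if elementary) care: it is a statement about the local structure of the boundary. The point is that $\partial S$, being the union of $1$-cells each incident to a single $2$-cell, is a graph in which every vertex $v$ has even degree equal to $2i_v$ — each connected component of $\lk(v)$ is an arc contributing exactly two boundary edges at $v$ (the two ``extreme'' edges of that arc of faces around $v$), and distinct components contribute disjoint pairs. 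Hence $\sum_{v\in\partial S}\deg_{\partial S}(v)=2\sum_{v\in\partial S}i_v$, while the handshake lemma gives $\sum_{v\in\partial S}\deg_{\partial S}(v)=2E_\partial$; the identity follows. (One should check the degenerate possibilities: a $0$-cell with $\lk(v)$ a full circle lies in the interior and is not counted; a $0$-cell incident to no $2$-cell contributes $i_v=0$ and no boundary edges, consistent with the convention that isolated-type summands vanish.) Substituting $\sum_{v\in\partial S}i_v=E_\partial$ into the displayed equation gives $2\pi V-2\pi E+2\pi F=2\pi\chi(S)$, which is the assertion. Throughout, the only facts used about the Euclidean structure are that an $n$-gon has angle sum $(n-2)\pi$ and that angles are additive around a point in a cell; everything else is combinatorial, so the finiteness remark at the end of the statement is automatic.
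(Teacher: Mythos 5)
The paper offers no proof of \cref{GB} at all: it is imported verbatim from \cite{Fansandladders}, so there is no internal argument to compare yours against. Your double-counting scheme is the standard derivation, and the bookkeeping is correct as far as it goes: expanding $K_v$ and $k_v$, regrouping all corner angles by faces, and using that an $n$-gon has angle sum $(n-2)\pi$ together with the face--side incidence count $2E_{\mathrm{int}}+E_{\partial}$ (legitimate here because, by definition, every 1-cell lies in one or two 2-cells) reduces the theorem exactly to the identity $\sum_{v\in\partial S} i_v=E_\partial$, as you say.

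The genuine gap is in your justification of that identity. You assert that every connected component of $\lk(v)$ at a boundary vertex is an arc, so that $\deg_{\partial S}(v)=2i_v$; but the paper's definition of a pseudo-surface imposes a condition only on 1-cells, so a boundary vertex may in addition carry circle components in its link. For instance, glue a single Euclidean triangle to a triangulated 2-sphere along one vertex $v$: every 1-cell lies in one or two 2-cells, $v\in\partial S$, and $\lk(v)$ is a circle plus an arc, so $i_v=2$ while only two boundary edges meet $v$. Then $\sum_{v\in\partial S}i_v=E_\partial+1$, your handshake step fails, and indeed the formula as literally stated fails for this complex (the left-hand side equals $2\pi\chi(S)-\pi$). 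What your argument actually proves is $\sum_{v\in\partial S}a_v=E_\partial$, where $a_v$ is the number of arc components of $\lk(v)$: each arc contributes its two endpoints, circles contribute nothing. Correspondingly, the general combinatorial Gauss--Bonnet theorem is usually stated with $\bigl(2-\chi(\lk(v))\bigr)\pi$ in place of $(2-i_v)\pi$, and the two agree exactly when boundary links contain no circle components. Since every pseudo-surface to which \cref{GB} is applied in this paper is a disc or a sphere (so boundary links are single arcs), your proof does cover all the instances actually used; but as a proof of the statement as phrased you must either add the hypothesis that links of boundary vertices are disjoint unions of arcs, or replace $i_v$ by $a_v$ (equivalently $\chi(\lk(v))$) and note the two coincide in the relevant cases.
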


In the rest of this section we will prove \cref{local link}. 

Assume $\Delta$ is a finite $\Delta$-complex homeomorphic to a disc and $f:\Delta\to X$ is a near-immersion mapping the boundary of $\Delta$ homeomorphically onto $\Gamma$. 
We endow $\Delta$ with a piecewise Euclidean structure by endowing each 2-cell $\sigma$ of $\Delta$ with the Euclidean metric of its image in $X$ under $f$. Let $p\in\Delta$, since $f$ is a near-immersion, each arc $\alpha$ in $\lk(p)$ is sent to an arc $f_p(\alpha)$ in $\lk(f(p))$ which has the same length. It follows that the girth of $\lk(p)$ is at least that of $\lk(f(p))$, which is at least $2\pi$ since $X$ is CAT(0). By the link condition, \cref{link condition}, $\Delta$ is CAT(0).

Denote abusively by $x_i\in \Delta$ the unique point in $\partial\Delta$ with image $x_i\in X$. 

We collect a few important facts about $\Delta$ in the following claims.

\begin{claim}\label{claim about Delta} 
We have,
\begin{enumerate}[label = (\arabic*)]
    \item for all $v\in\inner(\Delta)$, $K_v\le 0$,  \label{claim about Delta. interior points}
    \item for all $v\in\partial\Delta-\set{x_1,...,x_n}$,  $k_v\le 0$,  \label{claim about Delta. boundary points}
\end{enumerate}
\end{claim}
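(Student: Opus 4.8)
The statement collects two local convexity-type facts about the pulled-back piecewise-Euclidean structure on $\Delta$. Both follow from the same principle: $f$ is a near-immersion, so angles are preserved and the link $\lk_\Delta(v)$ maps to $\lk_X(f(v))$ by a local isometry (an immersion of graphs that preserves edge lengths), hence the girth of $\lk_\Delta(v)$ is at least the girth of $\lk_X(f(v))\ge 2\pi$. I have already used this to conclude $\Delta$ is CAT(0) via the link condition; the point now is to extract the sign of the curvature contributions $K_v$ and $k_v$ from the same fact applied pointwise.

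\textbf{Step 1: interior points.} For $v\in\inner(\Delta)$, the link $\lk_\Delta(v)$ is a graph which is a topological circle (since $\Delta$ is a surface with $v$ in its interior), and the sum of angles around $v$ is exactly the total length of this circle $\lk_\Delta(v)$, which is an embedded (hence injective) loop. By the girth bound, its length is $\ge 2\pi$. Therefore $K_v = 2\pi - \sum\alpha \le 0$. I should be slightly careful that $\lk_\Delta(v)$ is genuinely a circle and not a more complicated graph — but for a point in the interior of a surface it is a circle, and for $v\notin\Delta^{(0)}$ the link is a circle of length exactly $2\pi$ (the angle sum is $2\pi$), so $K_v=0$; the only interesting case is $v\in\Delta^{(0)}\cap\inner(\Delta)$, where we use the girth.

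\textbf{Step 2: boundary points.} For $v\in\partial\Delta - \{x_1,\dots,x_n\}$, the link $\lk_\Delta(v)$ is a disjoint union of arcs (since $v$ lies on the boundary of a disc), and $i\ge 1$ is the number of these arcs. Here I must use that $f|_{\partial\Delta}:\partial\Delta\to\Gamma$ is a homeomorphism onto the polygonal curve $\Gamma$, and that $v$ is \emph{not} one of the distinguished vertices $x_1,\dots,x_n$ of $\Gamma$, so the image $f(v)$ lies in the interior of a $1$-cell of $\Gamma$, meaning the two boundary edges at $v$ map to a geodesic segment through $f(v)$ — i.e. the boundary is "straight" at $v$ in $X$, and hence (since $f$ is cellular and a near-immersion) also straight at $v$ in $\Delta$. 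Now the key estimate: each arc component of $\lk_\Delta(v)$ has endpoints on $\partial\lk_\Delta(v)$, and by the girth-$2\pi$ condition together with the straightness of the boundary, one argues that the total length of $\lk_\Delta(v)$ is at least $i\pi$ — roughly, each arc, together with a length-$\pi$ "boundary" portion of the circle-at-infinity of the half-disc model, would otherwise close up into a short loop. Concretely: if $\lk_\Delta(v)$ had total length $<i\pi$, then some arc has length $<\pi$, and concatenating it with the straight boundary path (length $\pi$ across the two boundary edges, or $0$ if that arc abuts the actual boundary twice) produces an injective loop of length $<2\pi$, contradicting the girth bound. Hence $\sum\alpha = \mathrm{length}(\lk_\Delta(v)) \ge i\pi \ge (i-0)\pi$, wait — I want $\sum\alpha \ge (i-2+2)\pi$... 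Let me restate: I claim $\sum\alpha \ge (2-i)\cdot(-1)$... The clean target is $k_v = (2-i)\pi - \sum\alpha \le 0$, i.e. $\sum\alpha \ge (2-i)\pi$. When $i\ge 2$ this is automatic since $\sum\alpha\ge 0 > (2-i)\pi$; the only case needing work is $i=1$, where I need $\sum\alpha\ge\pi$, and this is exactly the statement that the single arc $\lk_\Delta(v)$ has length $\ge\pi$ — which follows because otherwise that arc, concatenated with the straight length-$\pi$ boundary segment at $f(v)$, gives an injective loop in $\lk_X(f(v))$ of length $<2\pi$.

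\textbf{Main obstacle.} The delicate point is Step 2 in the case $i=1$: making precise that the boundary of $\Gamma$ being geodesic at $f(v)$ forces a genuine length-$\pi$ contribution to be "available" in $\lk_X(f(v))$ to close up the short arc, and that the resulting loop is injective (non-backtracking) so that the girth bound actually applies. One must also handle the subtlety that $\lk_\Delta(v)$ could a priori have $i\ge 2$ components even for a boundary point if $v$ is a "pinch" of the disc $\Delta$ — but since $\Delta$ is an honest disc (homeomorphic to $\bbD^2$) and $v\in\partial\Delta$, the link is a single arc, so $i=1$ always here; still, stating the estimate for general $i$ makes the argument robust. I expect the interior case to be essentially immediate and the boundary case to require the short careful argument sketched above about concatenating the link-arc with the straight boundary segment.
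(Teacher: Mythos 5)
Your Step 1 (interior points) is correct and matches the paper: since $\Delta$ is CAT(0), $\lk_\Delta(v)$ is a circle whose length is its girth, which is at least $2\pi$, giving $K_v\le 0$. You also correctly observe that $\Delta$ being a genuine disc forces $i=1$ at every boundary point, so the target in Step 2 is exactly $\sum\alpha\ge\pi$, and you correctly isolate straightness of $\Gamma$ at $f(v)$ as the key input.

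However, the mechanism you propose for $\sum\alpha\ge\pi$ does not work. You suggest concatenating the arc $\lk_\Delta(v)$ with ``the straight length-$\pi$ boundary segment at $f(v)$'' to manufacture an injective loop in $\lk_X(f(v))$ of length $<2\pi$. There is no such boundary segment in $\lk_X(f(v))$: the boundary $\partial\Delta$ near $v$ projects under $f_v$ to the two \emph{endpoints} $\widehat{x_i},\widehat{x_{i+1}}$ of the arc, not to a path of length $\pi$ between them, and the ``circle-at-infinity of the half-disc model'' is not part of the link. Worse, even if some geodesic $\beta$ of length $\pi$ between $\widehat{x_i}$ and $\widehat{x_{i+1}}$ happened to exist, an arc of length $<\pi$ would itself be the \emph{unique} geodesic between those endpoints (two distinct paths of length $<\pi$ with common endpoints would already violate girth $\ge 2\pi$), so the concatenation would backtrack and never form an injective loop. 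The girth bound is the wrong tool here. The correct argument is the one the paper uses, which is more direct: since $f(v)$ lies in the interior of a geodesic side of $\Gamma$, $\angle_{f(v)}(x_i,x_{i+1})=\pi$, and by \cref{path in link} this forces $\measuredangle_{f(v)}(\widehat{x_i},\widehat{x_{i+1}})\ge\pi$. The arc $\lk_\Delta(v)$ maps under $f_v$ to a path of the same length from $\widehat{x_i}$ to $\widehat{x_{i+1}}$, so $\sum\alpha\ge\measuredangle_{f(v)}(\widehat{x_i},\widehat{x_{i+1}})\ge\pi$, i.e.\ $k_v\le0$. (The paper phrases this as ``the triangle inequality for angles.'') In short: use the angle-to-link-distance comparison, not a girth argument.
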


\begin{proof}
\underline{Proof of (1):} Since $\Delta$ is CAT(0), the girth of $\lk(v)$ is at least $2\pi$. If $v\in\inner \Delta$ then $\lk(v)$ is a circle, and $K_v = 2\pi - \girth(\lk (v)) \le 0$, as claimed.

\underline{Proof of (2):} If $v\in \inner(\gamma) \cap \partial\Delta-\set{x_1,...,x_n}$, then $f(v)$ is on the geodesic segment connecting  $x_i,x_{i+1}$ or $x_n,x_1$. By the triangle inequality for angles, the sum of the angles of $v$ in 2-cells containing $v$ is at least $\angle_v(x_i,x_{i+1})=\pi$ (or, in the latter case, $\angle_v(x_n,x_1)=\pi$), hence $k_v \leq 0$.
\end{proof}

Let $\gamma$ be a geodesic in $\Delta$ connecting the points $p,q$. For $v\in \inner(\gamma)$ define
$$T_v = \pi - \angle_{f(v)}(f\circ \gamma)$$ where $\angle_{f(v)}(f\circ \gamma)$ is the angle that $f\circ \gamma$ makes at $f(v)$ \textbf{in $X$}. By definition, $0\le T_v\le \pi$.

\begin{claim}\label{inequalities claim} Assume $\gamma$ is a geodesic in $\Delta$ such that $f\circ \gamma$ is a closed loop in $X$ Then:
\begin{enumerate}[label=(\roman*)]
    \item $\sum_{v\in \inner(\gamma)} T_v \ge \pi$.\label{first point}
    
    \item If $v\in \inner(\gamma) \cap \inner(\Delta)$ then $K_v\le -2 T_v$. \label{second point}
    
    \item If $v\in \inner(\gamma)\cap \set{x_1,\dots ,x_n}$ then $k_v\leq -2T_v+\pi-\angle_{x_i}\Gamma$, where $T_v = \pi-\angle_{f(v)}f\circ \gamma$.  \label{third point}
\end{enumerate}
\end{claim}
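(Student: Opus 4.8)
The plan is to read off all three inequalities from the closed loop $\bar\gamma:=f\circ\gamma$ in $X$ together with the way $\gamma$ meets the links of $\Delta$. The one structural input I would isolate first: since $f$ is a near‑immersion and the metric on $\Delta$ is pulled back cell‑by‑cell from $X$, for every $v$ the induced map $\lk_\Delta(v)\to\lk_X(f(v))$ is simplicial, preserves edge lengths, and is locally injective (injectivity on cells gives injectivity on each edge; the ``no twin cells'' condition gives local injectivity at the vertices, all of which have valence $\le 2$ in $\lk_\Delta(v)$). Hence this map is an \emph{immersion of metric graphs}: it is $\pi_1$‑injective, it carries reduced (locally geodesic) paths to reduced paths of the same length, and combined with $\girth(\lk_X(\,\cdot\,))\ge 2\pi$ (\cref{link condition}) it lets one compare link distances in $\Delta$ with link distances in $X$. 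I would also record the elementary fact that the Alexandrov angle between two directions is always $\le\pi$.

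For \textbf{(i)}: $\bar\gamma$ is a closed piecewise–geodesic loop in the CAT(0) space $X$; its corners along $\inner(\gamma)$ have turning $T_v\ge 0$ (because $\gamma$ is a geodesic, $\measuredangle_v(\gamma)\ge\pi$ in $\Delta$, and the immersion only shortens link distances), and there is possibly one extra corner at the basepoint $f(p)=f(q)$ whose turning is at most $\pi$. I would then invoke that a closed piecewise–geodesic loop in a CAT(0) space has total turning $\ge 2\pi$: triangulate the geodesic polygon by geodesics from one of its vertices, bound the sum of the polygon's Alexandrov angles by $(k-2)\pi$ using the CAT(0) angle–sum inequality (each geodesic triangle has angle sum $\le\pi$) together with the triangle inequality for Alexandrov angles, and subtract from $k\pi$ (alternatively this is immediate from Reshetnyak's majorization theorem). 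Subtracting the basepoint contribution gives $\sum_{v\in\inner(\gamma)}T_v\ge 2\pi-\pi=\pi$.

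For \textbf{(ii)}: let $v\in\inner(\gamma)\cap\inner(\Delta)$, so $\lk_\Delta(v)$ is a circle of length $\sum\alpha$ and $K_v=2\pi-\sum\alpha$. The directions $\xi_-,\xi_+$ of $\gamma$ at $v$ split $\lk_\Delta(v)$ into two arcs $A_1,A_2$; since $\gamma$ is geodesic, $|A_1|,|A_2|\ge\pi$ and $|A_1|+|A_2|=\sum\alpha$. If $\angle_{f(v)}(\bar\gamma)=\pi$ then $T_v=0$ and $K_v\le 0$ by \cref{claim about Delta}; otherwise let $\sigma$ be the unique geodesic in $\lk_X(f(v))$ between the images $\widehat{\xi_-},\widehat{\xi_+}$, of length $\angle_{f(v)}(\bar\gamma)<\pi$. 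The immersion sends $A_i$ to a reduced path of length $|A_i|>\angle_{f(v)}(\bar\gamma)$, so the loop $f(A_i)\cdot\bar\sigma$ is not null–homotopic in the graph $\lk_X(f(v))$ and hence has length $\ge\girth\ge 2\pi$, giving $|A_i|\ge 2\pi-\angle_{f(v)}(\bar\gamma)$ for $i=1,2$. Summing, $\sum\alpha\ge 4\pi-2\angle_{f(v)}(\bar\gamma)$, i.e. $K_v\le -2\bigl(\pi-\angle_{f(v)}(\bar\gamma)\bigr)=-2T_v$. For \textbf{(iii)}, $\lk_\Delta(x_i)$ is now an arc of length $\sum\alpha$ running from the direction $\eta_-$ towards $x_{i-1}$ to the direction $\eta_+$ towards $x_{i+1}$, with $k_{x_i}=\pi-\sum\alpha$; the directions $\xi_-,\xi_+$ of $\gamma$ cut it into $B_-,B_0,B_+$ (in the order $\eta_-,\xi_-,\xi_+,\eta_+$), with $|B_0|=\measuredangle_{x_i}(\xi_-,\xi_+)\ge\pi$ since $\gamma$ is geodesic. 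When $\angle_{x_i}(\bar\gamma)=\pi$ the claim is immediate ($T_v=0$ and $\sum\alpha\ge\measuredangle_{x_i}(\widehat{x_{i-1}},\widehat{x_{i+1}})\ge\angle_{x_i}\Gamma$ via the immersed arc). Otherwise the same essential–loop argument applied to $f(B_0)\cdot\bar\sigma$ gives $|B_0|\ge 2\pi-\angle_{x_i}(\bar\gamma)=\pi+T_v$, while the triangle inequality for Alexandrov angles at $x_i$ together with the immersed subarcs $f(B_\pm)$ gives $\angle_{x_i}\Gamma\le |B_-|+\measuredangle_{x_i}(\widehat{\xi_-},\widehat{\xi_+})+|B_+|=|B_-|+|B_+|+(\pi-T_v)$, i.e. $|B_-|+|B_+|\ge\angle_{x_i}\Gamma-\pi+T_v$. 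Adding the two estimates, $\sum\alpha=|B_-|+|B_0|+|B_+|\ge\angle_{x_i}\Gamma+2T_v$, so $k_{x_i}\le -2T_v+\pi-\angle_{x_i}\Gamma$.

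I expect the main obstacle to be \textbf{(i)}, and specifically a clean justification of the total–turning bound $\ge 2\pi$ in the presence of degeneracies — $\bar\gamma$ is necessarily a non‑injective (closed) loop and its interior corners may coincide — so the triangulation argument must be run carefully with possibly degenerate comparison triangles, or one must be willing to quote Reshetnyak majorization. By contrast (ii) and (iii), though they involve more bookkeeping, are routine once the link immersion and the girth inequality are in hand; the only point to be careful about there is separating the case $\angle_{f(v)}(\bar\gamma)=\pi$ (where one falls back on \cref{claim about Delta}) from the case $\angle_{f(v)}(\bar\gamma)<\pi$ (where the essential–loop argument applies). It is also worth noting, for the later use of this claim, that the same girth argument forces $T_v=0$ at any point of $\inner(\gamma)$ interior to a $1$‑cell of $\Delta$, so that the turning in (i) is in fact concentrated at the $0$‑cells, matching the hypotheses of \cref{GB}.
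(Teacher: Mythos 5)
Your argument follows the paper's proof essentially verbatim: for (i) the paper invokes the same polygon lemma (cutting the geodesic polygon into CAT(0) triangles, not Reshetnyak majorization — and your worry about degeneracies is moot, both because the CAT(0) angle-sum inequality holds for degenerate triangles and because in the application $p,q$ are chosen closest so $f\circ\gamma$ is simple), and for (ii) and (iii) the paper uses exactly your splitting of $\lk_\Delta(v)$ by the directions of $\gamma$, the girth $\geq 2\pi$ bound via the essential loop $f_v\circ\alpha_i$ concatenated with the geodesic $\beta$ realizing $\angle_{f(v)}(f\circ\gamma)$, and in (iii) the triangle inequality for Alexandrov angles to absorb $\measuredangle\alpha_1+\measuredangle\beta+\measuredangle\alpha_3\geq\angle_{x_i}\Gamma$. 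Your closing observation that $T_v=0$ for $v\in\inner(\gamma)$ interior to a cell is also implicitly relied upon when these estimates are fed into Gauss–Bonnet, and is a helpful clarification.
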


\begin{proof}
To prove \ref{first point}, we will use the following lemma.
    \begin{lemma} \label{polygon lemma}
    Let $P$ be a geodesic polygon in a CAT(0) space with ordered vertices $v_1,\dots,v_n$ then $\sum_{i=1}^n (\pi-\angle_{v_i}P)\geq 2\pi$.\qed
    \end{lemma}

    The proof of \cref{polygon lemma} is standard and is obtained by cutting the polygon into triangles by erecting geodesic segments between $v_i$ for $2\leq i\leq n-2$ and $v_n$ and using the fact that the sum of angles in a triangle in a CAT(0) space is less than or equal to $\pi$ (\cite{BridHäf} Proposition II.1.7).
    See for example Theorem 2.4 in \cite{DisDif2008} and Corollary 2.4 in \cite{AlexBish}.
    
    Consider the geodesic polygon $P$ which is the image of $f\circ \gamma$. We have $\sum_{v\in \inner(\gamma)} T_v = \sum_{w\in P} (\pi - \angle_{w} P) - (\pi - \angle_{f(\gamma(0))} P) \ge 2\pi - \pi=\pi$ where the last inequality follows from \cref{polygon lemma}. This proves \ref{first point}.
    
    \bigskip
    
    To prove \ref{second point}, let $v\in \inner(\gamma) \cap \inner(\Delta)$.
    If $T_v =0$ then by \cref{claim about Delta}\ref{claim about Delta. interior points} $K_v \le 0  = T_v$, therefore we may assume that $T_v>0$, i.e. the angle that the path $f\circ \gamma$ makes at $f(v)$ in $X$ is strictly less than $\pi$.
    
    Let $\gamma_+$ and $\gamma_-$ be the two points in $\lk(v)$ corresponding to the geodesic $\gamma$ passing through $v$.
    The points $\gamma_+,\gamma_-$ break the circle $\lk(v)$ into two arcs $\alpha_1,\alpha_2$, as in the left hand side of \cref{fig:link 1}.
    As the girth of $\lk(v)$ is greater than $2\pi$, it follows that $\measuredangle \alpha_1+\measuredangle \alpha_2\geq 2\pi$.
    Since $f$ is a near-immersion, the paths $f_v\circ \alpha_i$ are non-backtracking in $\lk(f(v))$ and $\measuredangle \alpha_i = \measuredangle (f_v\circ \alpha_i)$.
    
    From \cref{path in link} there is a path $\beta$ in $\lk(f(v))$ whose length realizes the angle that $f\circ \gamma$ makes at $f(v)$. In $\lk(f(v))$, all three paths $\beta, f_v \circ \alpha_1, f_v \circ \alpha_2$ are non-backtracking and connect $f_v(\gamma_+),f_v(\gamma_-)$, as in the right hand side of \cref{fig:link 1}.
    
    \begin{figure}[htp]
    \centering
    \includegraphics[]{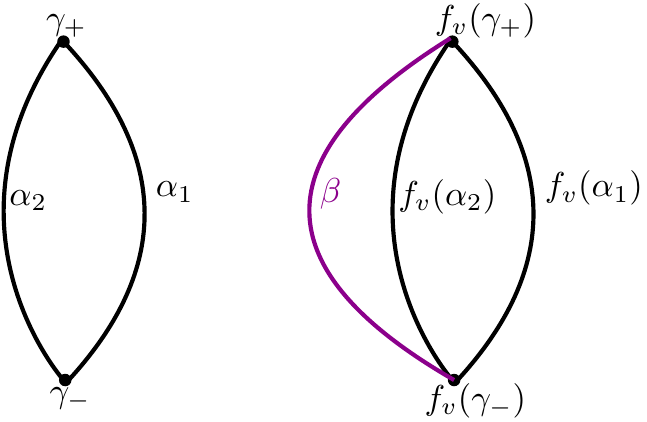}
    \caption{The links of $v$ (on the left) and $f(v)$ (on the right)}
    \label{fig:link 1}
    \end{figure}
    
    For $i\in \set{1,2}$, $\measuredangle \alpha_i \ge \angle(\gamma_-,\gamma_+) = \pi$, since $\gamma$ is a geodesic in $\Delta$, hence in particular $\measuredangle (f_v \circ \alpha_i) = \measuredangle \alpha_i \ge \pi >\measuredangle \beta$, so the concatenation of $f_v\circ \alpha_i$ and $\beta$ forms an essential cycle in $\lk(f(v))$. Hence $\measuredangle (f_v \circ \alpha_i)+\measuredangle \beta\geq 2\pi$ which implies $\measuredangle \alpha_i\geq 2\pi-\measuredangle \beta$.
    
    In conclusion $$ K_v=2\pi-(\measuredangle \alpha_1+\measuredangle \alpha_2)\leq 2\pi - 2\cdot(2\pi-\measuredangle \beta)=-2(\pi-\measuredangle \beta)=-2T_v,$$ as claimed. 
    
    \bigskip
    
    To prove \ref{third point}, let $v\in \inner(\gamma) \cap \set{x_1,\dots ,x_n}$. 
We must have $k_v\leq 0$, because otherwise, we could find a shortcut of $\gamma$ not passing through $v$, in contradiction to $\gamma$ being a geodesic. Hence if $T_v=0$, then $k_v\le 0 = T_v$, as claimed. Therefore, we may assume that $T_v >0$, i.e. that the angle that the path $f\circ \gamma$ makes at $f(v)$ in $X$ is less than $\pi$.
    
As in the proof of \cref{inequalities claim} \ref{third point}, let $\gamma_+,\gamma_-$ be the two points in $\lk(v)$ corresponding to $\gamma$. They break the arc $\lk(v)$ into three arcs $\alpha_1,\alpha_2,\alpha_3$ where $\alpha_2$ is the middle arc and $\alpha_1,\alpha_3$ may be degenerate, as in the left-hand side of \cref{fig:link 2}. Since $f$ is a near-immersion, the paths $f_v\circ \alpha_i$ are non-backtracking in $\lk(f(v))$ and $\measuredangle \alpha_i = \measuredangle (f_v\circ \alpha_i)$.

From \cref{path in link} there is a path $\beta$ in $\lk(f(v))$ whose length realizes the angle that the path $f\circ \gamma$ makes at $f(v)$.
    We have that $\beta,f_v\circ \alpha_2$ connect $f_v(\gamma_+),f_v(\gamma_-)$ while $f_v\circ\alpha_1,f_v\circ \alpha_3$ connect to $f_v(\gamma_+),f_v(\gamma_-)$ respectively, as in the right hand side of \cref{fig:link 2}.

    We have $\measuredangle \alpha_2\geq \angle(\gamma_-,\gamma_+)=\pi$ where the last equality follows, since $\gamma$ is a geodesic, hence in particular $\measuredangle(f_v \circ \alpha_2)=\measuredangle\alpha_2\geq \pi > \measuredangle \beta$, so the concatenation of $f_v\circ \alpha_2$ and $\beta$ forms an essential cycle in $\lk(f(v))$. We get $\measuredangle (f_v \circ \alpha_2)+\measuredangle \beta\geq 2\pi$ which implies $\measuredangle \alpha_2\geq 2\pi-\measuredangle \beta$.

    \begin{figure}[htp]
    \centering
    \includegraphics[]{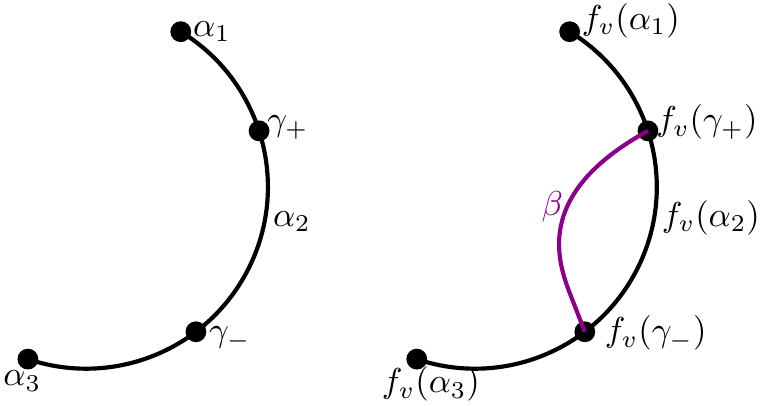}
    \caption{The links of $v$ (on the left) and $f(v)$ (on the right)}
    \label{fig:link 2}
    \end{figure}

So $k_v=\pi - (\measuredangle\alpha_1+\measuredangle\alpha_2+\measuredangle\alpha_3)\leq \pi -(2\pi-\measuredangle \beta)-(\measuredangle\alpha_1+\measuredangle\alpha_3)=-T_v-(\measuredangle\alpha_1+\measuredangle\alpha_3)=-2T_v+(\pi-(\measuredangle\alpha_1+\measuredangle \beta+\measuredangle\alpha_3)\leq -2T_v + (\pi-\angle_v \Gamma)$

where the last inequality is from the triangle inequality for angles.

    This completes the proof of the claim.
\end{proof}

We are now ready to prove \cref{local link}.

\begin{proof}[Proof of \cref{local link}]

Let us assume towards contradiction that $f$ is not injective. Therefore there are $p\neq q\in \Delta$ such that $f(p)=f(q)$ and consider $\gamma:\brs{0,1} \to \Delta$ a geodesic between them in $\Delta$. Then $f\circ \gamma:\brs{0,1}\to X$ is a non-trivial, piecewise linear loop. We choose $p,q$ closest such points, which ensures that $f\circ \gamma$ is a simple closed curve. 

\begin{equation}
        \begin{split}
    2\pi & \overset{(1)}{=}\sum_{v\in \inner(D)} K_v+\sum_{v\in \partial(D)} k_v \\
    & =\sum_{v\in \inner(\gamma) \cap \inner(D)} K_v+\sum_{v\in \inner(\gamma) \cap \partial(D)} k_v + \sum_{v\in \inner(D)-\inner(\gamma)} K_v + \sum_{v\in \partial(D)-\inner(\gamma)}k_v \\ &
    \overset{(2)}{\leq} -2(\sum_{v\in \inner(\gamma) \cap \inner(D)} T_v+\sum_{v\in \inner(\gamma) \cap \partial(D)} T_v) +\sum_{x_i\in \inner(\gamma)}(\pi-\angle_{x_i}\Gamma) \\&
    + \sum_{v\in \inner(D)-\inner(\gamma)} K_v + \sum_{v\in \partial(D)-\inner(\gamma)}k_v \\&
    \overset{(3)}{\le} -2(\sum_{v\in\inner(\gamma)}T_v) +\sum_{x_i\in \inner(\gamma)}(\pi-\angle_{x_i}\Gamma) +\sum_{x_i\notin \inner(\gamma)}k_{x_i} \\ &
    \overset{(4)}{<} -2\pi +4\pi -\sum_{x_i\notin \inner(\gamma)}(\pi-\angle_{x_i}\Gamma) +\sum_{x_i\notin \inner(\gamma)}k_{x_i}  \overset{(5)}{\leq}2\pi 
    \end{split}
\end{equation}
Where (1) is \cref{GB}, (2) from \cref{inequalities claim}\ref{second point},\cref{inequalities claim}\ref{third point}, (3) from the fact that $K_v\leq 0$ and $k_v\leq 0$ for $v\notin\set{x_1,\dots ,x_n}$, (4) from $\sum T_v\geq \pi$ (\cref{inequalities claim}\ref{first point}) and $\kappa(\Gamma)<4\pi$, and finally (5) follows from $k_{x_i}+\pi-\angle_{x_i}\Gamma\leq 0$.
This is a contradiction.
\end{proof}

\section{Minimal tetrahedron}
\label{sec: minimal tetrahedron}
The idea of the proof of the main theorem is to show that there is an embedding of a ``deflated tetrahedron'', as in \cref{fig:def tetra}, to $X$, such that the sides are mapped to $\overline{x_ix_j}$, because then the ``faces'' of the deflated tetrahedron must be mapped to $\bt{x_i,x_j,x_k}$, and they intersect in a unique point. 

\begin{figure}[H]
    \centering
    \includegraphics[]{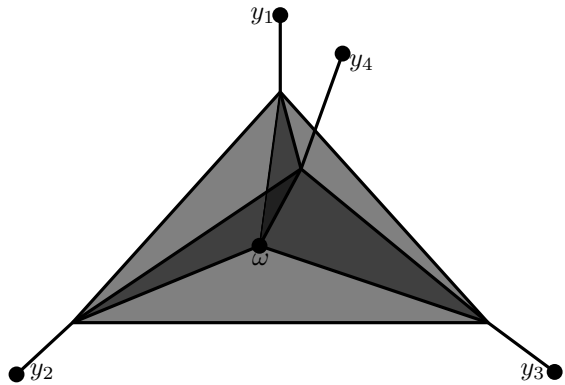}
    \caption{Deflated tetrahedron}
    \label{fig:def tetra}
    \end{figure}

In our minds, we start with a finite $\Delta$-complex, as in \cref{fig:def tetra}, and a map to $X$, and we modify those until we get an embedding. This leads us to our next definition, keeping in mind that in \cref{fig:def tetra} we have six spiky triangles, $\set{\bt{y_i,y_j,\omega}}_{i\leq i<j\leq 4}$, glued on their boundaries.

\begin{definition}
\begin{itemize}

 \item A \emph{deflated tetrahedron} is a collection of:
 \begin{itemize}
     \item six generalized triangles (which are allowed to be degenerate, recall \cref{generalized disc and triangle}) $\set{D_{ij}}_{1\leq i<j\leq 4}$ with the marked points on $D_{ij}$ denoted $y_i,y_j,\omega$, (denoting $D_{ji} = D_{ij}$ when $i<j$),
     \item bijections between $\overline{y_i\omega}$ in $D_{ij}$ and $\overline{y_i\omega}$ in $D_{ik}$,
 \end{itemize}
   such that the complex $T_{ijk}$, obtained by gluing $D_{ij}, D_{jk},D_{ik}$ along their boundaries via the bijections above, is homeomorphic to a spiky triangle. 
   
   For a deflated tetrahedron $\set{D_{ij}}_{1\leq i<j\leq 4}$ let $Z$ be the complex obtained by gluing $\set{D_{ij}}_{1\leq i<j\leq 4}$ along their boundaries, that is, $ Z=\coprod_{1\leq i<j\leq 4}D_{ij}/\sim$ where $\sim$ is generated by the given bijections.

    \item $f:\coprod_{1\leq i<j\leq 4} D_{ij}\to X$ is a \emph{filling map} of $x_1,x_2,x_3,x_4$ through $o$ where $\set{D_{ij}}_{1\leq i<j\leq 4}$ is a deflated tetrahedron, $x_1,x_2,x_3,x_4,o\in X$, if
    \begin{itemize}
        \item $f$ is a cellular map, up to a refinement of the cellular structure on $X$, and injective on every cell,
        \item $f(y_i)=x_i$,
        \item $f|_{D_{ij}}\ii(\set{o})=\set{\omega}$,
        \item $f|_{\overline{y_iy_j}}$ from $\overline{y_iy_j}$ to the geodesic segment $\overline{x_ix_j}$ in $X$ is a homeomorphism and descends to a map from $Z$.
    \end{itemize}
    
    \item A pair $(\set{D_{ij}}_{1\leq i<j\leq 4},f)$ of a deflated tetrahedron and a filling map of $x_1,x_2,x_3,x_4$ through $o$ will be referred to as a \emph{deflated tetrahedron filling $x_1,x_2,x_3,x_4$ through $o$}.
\end{itemize}
\end{definition}

We note that we do not require $f|_{D_{ij}}$ to be injective, but  we will see that,  under an appropriate minimality assumption (\cref{def: minimal deflated}), $f$ is injective on $D_{ij}$ and so is its induced map on $Z$ (\cref{deflated tetrahedron filling exists}), and that $Z$ is a deformation retract of what one will imagine a deflated tetrahedron to be - a shape like in \cref{fig:def tetra}.

\begin{proposition}\label{deflated tetrahedron filling exists}
For any $x_1,x_2,x_3,x_4\in X$ and $o\in \bigcap_{1\le i<j<k\le 4} \bt{x_i,x_j,x_k}$ there exists a deflated tetrahedron filling $x_1,x_2,x_3,x_4$ through $o$.
\end{proposition}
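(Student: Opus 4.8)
The plan is to realise each $D_{ij}$ as an embedded full spiky triangle on the triple $x_i,x_j,o$, glue the six of them along the geodesics to $o$, and check that each face assembles to a spiky triangle. Concretely, first subdivide $X$ finitely so that all segments $\overline{x_io}$ ($1\le i\le 4$) and $\overline{x_ix_j}$ ($1\le i<j\le 4$) are subcomplexes. For each pair $i<j$ apply \cref{triangles are discs} to the triple $x_i,x_j,o$: this produces a full spiky triangle, which we call $D_{ij}$ with distinguished vertices labelled $y_i,y_j,\omega$, together with an embedding $f_{ij}\colon D_{ij}\hookrightarrow X$ with image $\bt{x_i,x_j,o}$ restricting to a homeomorphism $\partial D_{ij}\to\triangle(x_i,x_j,o)$ sending $y_i\mapsto x_i$, $y_j\mapsto x_j$, $\omega\mapsto o$; in particular $f_{ij}$ carries $\overline{y_i\omega}^{D_{ij}}$ homeomorphically onto $\overline{x_io}$, $\overline{y_j\omega}^{D_{ij}}$ onto $\overline{x_jo}$, and $\overline{y_iy_j}^{D_{ij}}$ onto $\overline{x_ix_j}$. (When $\triangle(x_i,x_j,o)$ degenerates — because $x_i=x_j$, or $o\in\overline{x_ix_j}$, or $o=x_i$ — we take $D_{ij}$ to be the corresponding degenerate generalized triangle, as permitted by \cref{generalized disc and triangle}.) For each $i$ and each pair $\{j,k\}\subseteq\{1,2,3,4\}\setminus\{i\}$ define the gluing bijection $\overline{y_i\omega}^{D_{ij}}\to\overline{y_i\omega}^{D_{ik}}$ to be $f_{ik}\ii\circ f_{ij}$: both sides are homeomorphisms onto $\overline{x_io}$, so after passing to a common refinement of the $\Delta$-structures this is a cellular bijection fixing $y_i$ and $\omega$, and the resulting bijections are mutually compatible. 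Finally put $f\colon\coprod_{i<j}D_{ij}\to X$, $f|_{D_{ij}}=f_{ij}$.

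I then claim $(\{D_{ij}\},f)$ is a deflated tetrahedron filling $x_1,x_2,x_3,x_4$ through $o$. Most conditions are immediate: $f$ is cellular and injective on every cell since each $f_{ij}$ is an embedding; $f(y_i)=x_i$; since $f_{ij}$ is injective with $o$ in its image and $f_{ij}(\omega)=o$ we get $f|_{D_{ij}}\ii(\set{o})=\set{\omega}$; $f|_{\overline{y_iy_j}}$ is a homeomorphism onto $\overline{x_ix_j}$; and $f$ descends to $Z$ because whenever $p\in\overline{y_i\omega}^{D_{ij}}$ is glued to $p'=f_{ik}\ii f_{ij}(p)\in\overline{y_i\omega}^{D_{ik}}$ one has $f_{ik}(p')=f_{ij}(p)$, and the arcs $\overline{y_i\omega}$ carry all identifications defining $Z$. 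So the substance of the proof is the purely topological claim that, for each triple $i<j<k$, the complex $T_{ijk}$ glued from $D_{ij},D_{jk},D_{ik}$ along these bijections is homeomorphic to a spiky triangle.

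The input that makes this work is: since $o$ lies in all four full triangles $\bt{x_i,x_j,x_k}$, the geodesics $\overline{x_1o},\overline{x_2o},\overline{x_3o},\overline{x_4o}$ are pairwise disjoint away from $o$. Indeed (assuming $o\notin\set{x_1,\dots,x_4}$ and the $x_i$ distinct, the remaining configurations being degenerate and treated separately), if $\overline{x_io}\cap\overline{x_jo}\supsetneq\set{o}$ then $\hat x_i=\hat x_j$ in $\lk(o)$, and picking a third index $k$ the loop $\triangle(\hat x_i,\hat x_j,\hat x_k)$ backtracks, hence is null-homotopic in $\lk(o)$, contradicting $o\in\bt{x_i,x_j,x_k}$ by \cref{triangle in link 1} (or \cref{triangle in link 2} if $o\in\triangle(x_i,x_j,x_k)$). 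Consequently each $D_{ij}$ has \emph{no spike at $\omega$}; its only possible spikes are at $y_i$ and at $y_j$, corresponding to the (possibly trivial) segments along which $\overline{x_ix_j}$ coincides with $\overline{x_io}$, resp. with $\overline{x_jo}$, near $x_i$, resp. $x_j$.

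It remains to glue. The three disc parts of $D_{ij},D_{jk},D_{ik}$ are glued cyclically along boundary arcs lying over (sub-arcs of) $\overline{x_io},\overline{x_jo},\overline{x_ko}$ and meeting pairwise only at $\omega$, so, exactly as in \cref{gluing an annulus}, their union is a compact surface with a single boundary circle $\overline{y_iy_j}\cup\overline{y_jy_k}\cup\overline{y_ky_i}$ and Euler characteristic $3-3+1=1$ — the link of $\omega$ being a circle precisely because there is no $\omega$-spike — hence a disc; a short local analysis at $y_i,y_j,y_k$ (comparing, near each corner, the spike lengths of the two adjacent $D$'s) shows the leftover $1$-dimensional pieces assemble into spikes attached only at those three corners, so $T_{ijk}$ is a spiky triangle. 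The main obstacle is precisely this last step — the gluing analysis near $\omega$ and near the $y_\bullet$, including the routine but slightly fiddly bookkeeping for the degenerate configurations where some $D_{ij}$ collapses to a segment (some $x_i=x_j$, or $o$ on a side, or $o=x_i$) — and the decisive simplification is the observation that the geodesics $\overline{x_io}$ meet pairwise only at $o$, which is exactly where the hypothesis that $o$ lies in all four full triangles is used.
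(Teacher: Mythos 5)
Your overall strategy is the same as the paper's: realize each $D_{ij}$ as the embedded full spiky triangle $\bt{x_i,x_j,o}$ via \cref{triangles are discs}, glue along the arcs over $\overline{x_io}$, and check each $T_{ijk}$ is a spiky triangle. The difference, and the gap, is in the last step.

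Your key simplifying claim --- that the four geodesics $\overline{x_1o},\dots,\overline{x_4o}$ meet pairwise only at $o$, hence that no $D_{ij}$ has a spike at $\omega$ --- is false in general, and your attempted proof of it does not go through. You argue that $\hat x_i=\hat x_j$ in $\lk(o)$ would force $\triangle(\hat x_i,\hat x_j,\hat x_k)$ to be null-homotopic, contradicting $o\in\bt{x_i,x_j,x_k}$. That contradiction is valid only when $o\notin\triangle(x_i,x_j,x_k)$ (via \cref{triangle in link 1}). You then wave at \cref{triangle in link 2} for the case $o\in\triangle(x_i,x_j,x_k)$, but that lemma characterizes which points \emph{near} $o$ lie in $\bt{x_i,x_j,x_k}$; it imposes no constraint on $o$ itself, since $o\in\triangle\subseteq\bt$ is automatic. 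So there is no contradiction in this subcase. Tracing through the argument, what $\hat x_i=\hat x_j$ actually forces is $o\in\overline{x_ix_k}\cap\overline{x_jx_k}$: a perfectly legal configuration in which $D_{ij}$ \emph{does} have a nontrivial spike at $\omega$ (namely $\overline{o\,a_{ij}}$, the overlap of $\overline{ox_i}$ and $\overline{ox_j}$), while $D_{ik},D_{jk}$ degenerate to intervals. Your ``no $\omega$-spike'' conclusion, and the Euler-characteristic gluing analysis built on it, therefore do not apply, and this case is not among those you flag as ``degenerate and treated separately'' (you list only $x_i=x_j$, $o$ on a side, $o=x_i$, and you never actually treat any of them). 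This is precisely the configuration the paper devotes the second half of its proof to (the $a_{ij}\neq\omega$ branch), where it shows $\bt{x_i,x_k,o}=\overline{x_ix_k}$, $\bt{x_j,x_k,o}=\overline{x_jx_k}$, and checks directly that the gluing $T_{ijk}$ is still a spiky triangle. You would need to add that case analysis to have a complete proof.
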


\begin{proof}
By \cref{triangles are discs}, $\set{\bt{x_i,x_j,o}}_{1\leq i<j\leq 4}$ are spiky triangles.
As in the proof of \cref{triangles are discs}, by \cite{zeeman_1964} for each $(i,j)$ we may refine the cellular structure of $X$ and get finite $\Delta$-complexes $D_{ij}\cong \bt{x_i,x_j,o}$ and cellular, injective maps $f_{ij}:D_{ij}\to X$.

Define $f:\coprod _{1\leq i<j\leq 4}D_{ij}\to X$ to be the function induced from $\set{f_{ij}}_{1\le i <j\le 4}$. There is a refinement of the triangulation of $X$ and of $D_{ij}$ such that $f$ is cellular.

The spiky triangles $D_{ij}$ are in particular generalized discs with three marked points \[y_i:=f_{ij}^{-1}(x_i),y_j:=f_{ij}^{-1}(x_j),\omega:=f_{ij}^{-1}(o).\] 
We claim $(\set{D_{ij}}_{1\leq i<j\leq 4},f)$ is a deflated tetrahedron filling $x_1,x_2,x_3,x_4$ through $o$.
To establish that $\set{D_{ij}}_{1\leq i<j\leq 4}$ is a deflated tetrahedron we need to show that $\set{T_{ijk}}_{1\leq i<j<k\leq 4}$ are spiky triangles, the other parts of the definition are clear from the construction.
Define $a_{ij}$ to be the point in $D_{ij}$ in which $\overline{\omega y_i},\overline{\omega y_j}$ diverge, as in \cref{fig:gen triangle}.

\begin{figure}[H]
    \centering
    \includegraphics{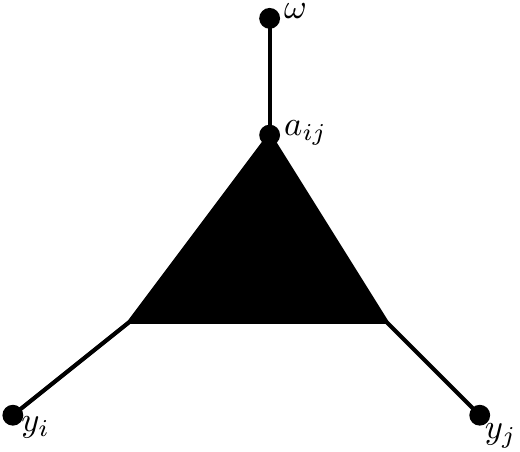}
    \caption{$\bt{y_i,y_j,o}$}
    \label{fig:gen triangle}
\end{figure}

We will argue without loss of generality for $T_{123}$. If $a_{12}=a_{13}=a_{23}=\omega$ then clearly $\set{T_{ijk}}_{1\leq i<j<k\leq 4}$ are spiky triangles, as we are gluing three triangles with no spike at $\omega$.

Suppose instead $a_{12}\neq \omega$. 

Denote by $\hat{x}_i$ the projection of $x_i$ to $\lk(o)$. As $a_{12}\neq \omega$ we have that $\hat{x}_1=\hat{x}_2$, hence $\triangle(\hat{x}_1,\hat{x}_2,\hat{x}_3)=\overline{\hat{x}_1\hat{x}_3}=\overline{\hat{x}_2\hat{x}_3}$. As $o\in \bt{x_1,x_2,x_3}$ we have by \cref{homotopies in link} that $o\in \overline{x_1x_3},\overline{x_2x_3}$.
In this case $\bt{x_1,x_3,o}=\overline{x_1x_3}$ and $\bt{x_2,x_3,o}=\overline{x_2x_3}$, so the gluing of $T_{123}\cong \bt{x_2,x_3,o}$ which is of course a spiky triangle.

We have that $f$ is a filling map of $x_1,x_2,x_3,x_4$ through $o$.
It is piecewise linear and injective on every 2-cell, as it is injective on $D_{ij}$, also from that follows that $f_{ij}^{-1}(\set{(o)})=\set{\omega}$ and by construction $f$ is a homeomorphism of the relevant boundary part to $\overline{x_ix_j}$.
\end{proof}

\begin{definition}\label{def: minimal deflated}
 A deflated tetrahedron $(\set{D_{ij}}_{1\leq i<j\leq 4},f)$ filling $x_1,x_2,x_3,x_4$ through some $o\in \bigcap_{1\le i<j<k\le 4} \bt{x_i,x_j,x_k}$ with a minimal number of cells will be referred to as a \textit{minimal tetrahedron filling $x_1,x_2,x_3,x_4$}.
\end{definition}

\begin{proposition} \label{local injectivity for triangles}
Consider $(\set{D_{ij}}_{1\leq i<j\leq 4},f)$ a minimal tetrahedron filling $x_1,x_2,x_3,x_4$, then for all $1\leq i<j<k\leq 4$ the induced map $f:T_{ijk}\to X$ is near-immersion and hence from \cref{local link for spiky} it is injective. In particular $\set{T_{ijk}}_{1\leq I<j<k\leq 4}$ embed in $Z$.
\end{proposition}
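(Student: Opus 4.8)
The plan is a minimality argument. Suppose, for contradiction, that $f|_{T_{ijk}}$ fails to be a near-immersion for some $1\leq i<j<k\leq 4$. Since a filling map is by definition injective on every cell, the only possible failure is the presence of \emph{twin cells} in $T_{ijk}$: distinct $2$-cells $t\neq t'$ with a common $1$-cell $e=t\cap t'$ and $f(t)=f(t')$. Because $f$ is injective on $t$ and on $t'$, there is a unique cellular isomorphism $\phi\colon t\to t'$ with $f|_{t'}\circ\phi=f|_t$, and $\phi$ restricts to the identity of $e$. I would \emph{fold}: let $Z'$ be the quotient of $Z$ identifying each cell of $t$ with its $\phi$-image, with the induced $\Delta$-structure (refining the cell structure of $X$ if necessary so that $f$ descends to a cellular $f'\colon Z'\to X$). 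Then $f'$ is still injective on each cell, and $Z'$ has exactly one fewer $2$-cell, and no more $1$- or $0$-cells, than $Z$. Iterating the fold while twin cells remain, the process terminates (the cell count strictly drops each time), ending at a complex $\widehat Z$ whose descended map $\widehat f$ is a near-immersion.

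The crux is then to verify that $(\widehat Z,\widehat f)$ is again (the complex of) a deflated tetrahedron filling $x_1,x_2,x_3,x_4$ through $o$: its strictly smaller cell count will contradict the minimality of $(\set{D_{ij}}_{1\leq i<j\leq 4},f)$. Three observations make this work. (1) Each side $\overline{x_ix_j}=f(\overline{y_iy_j})$ is a geodesic of the CAT(0) space $X$, hence an embedded arc; a $1$-cell on the free boundary arc $\overline{y_iy_j}$ of $Z$ lies in at most one $2$-cell, so the common edge $e$ of a twin pair never lies on a side (nor, lying in two $2$-cells, in a spike). Thus no fold moves a marked point $y_i$ or damages the homeomorphism of the relevant boundary arc onto $\overline{x_ix_j}$, and this filling-map condition persists. (2) Since $f|_{D_{ij}}^{-1}(o)=\set{\omega}$, no fold can identify $\omega$ with a point not mapping to $o$, so $\omega$ and the condition $\widehat f^{-1}(o)=\set{\omega}$ survive and $o$ is unchanged. (3) One must check that the planar and sheeted structure of a deflated tetrahedron is preserved. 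When $e$ lies in the disc part of a single $D_{ij}$, the fold takes place inside $D_{ij}$, which it replaces by a (possibly degenerate) generalized triangle $D'_{ij}$: folding two $2$-cells of a $\Delta$-complex homeomorphic to a spiky triangle along their common edge again yields such a complex, still PL-embeddable in $\bbR^2$ with $y_i,y_j,\omega$ on its boundary cycle. When $e$ lies on an arc $\overline{y_m\omega}$ — where two sheets of $T_{ijk}$, and three sheets of $Z$, meet — the fold merges $2$-cells of two of those sheets, and one tracks the effect on the link $\lk(p)$ in $Z$ of an interior point $p$ of $e$ (a product of $e$ with a tree) to check that the induced identification collapses that tree and, after the full reduction, restores the correct sheeted structure along $\overline{y_m\omega}$. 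In either case each $T_{ijk}$ remains a spiky triangle in $\widehat Z$.

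I expect item (3) — showing that the terminal object $\widehat Z$ of the folding process is \emph{literally} a deflated tetrahedron (six generalized triangles glued along arcs, each $T_{ijk}$ a spiky triangle), and not merely a complex of the same homotopy type — to be the main obstacle; items (1) and (2) and the cell-count bookkeeping are routine. Once it is established, the minimality of $(\set{D_{ij}}_{1\leq i<j\leq 4},f)$ is contradicted, so no $T_{ijk}$ contains twin cells, $f|_{T_{ijk}}$ is a near-immersion, and \cref{local link for spiky} gives that it is injective; in particular each $T_{ijk}$ embeds in $Z$.
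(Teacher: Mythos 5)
Your minimality scheme is on the right track, but the single operation you propose — quotienting $Z$ by $t\sim t'$ and keeping one folded copy — is the wrong surgery, and both halves of the item-(3) obstacle you correctly flag as the crux actually fail. For twin cells $t,t'$ inside the same $D_{ij}$: write $e,a,b$ for the edges of $t$ and $e,a',b'=\phi(a),\phi(b)$ for those of $t'$. If $a$ is interior to $D_{ij}$, a second $2$-cell $s$ is attached along $a$ and likewise $s'$ along $a'$; after the fold, the single edge $\bar a$ is incident to the three $2$-cells $\bar t, s, s'$, so the quotient is \emph{not} PL-embeddable in $\bbR^2$. Your claim that the fold ``again yields such a complex, still PL-embeddable in $\bbR^2$'' is therefore false. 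The paper instead removes \emph{both} $\inner(t)$ and $\inner(t')$, zips the resulting inner boundary of the annular region (identifying $a\sim a'$, $b\sim b'$), and takes the connected component of $\omega$; by \cref{gluing an annulus} this stays planar. For twin cells $t\subseteq D_{12}$, $t'\subseteq D_{13}$ with $e\subseteq\overline{y_1\omega}$: the quotient cell $\bar t$ cannot be assigned to any one sheet, and the boundary arcs $\overline{y_1\omega}^{D_{1j}}$ no longer glue coherently. The idea you are missing — which ``tracking the effect on the link'' does not supply — is to \emph{reallocate}: remove the piece from both $D_{12}$ and $D_{13}$ and insert a fresh copy into the opposite sheet, setting $C_{14}=D_{14}\cup t$; after that all three arcs along $\overline{y_1\omega}$ deform consistently around $t$ and the gluing data of the deflated tetrahedron extends.

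Separately, there is a degenerate subcase the paper must single out and that your surgery scheme cannot reach at all: when $t\cap t'$ is a $1$-cell together with a disjoint $0$-cell $v\in\overline{y_2\omega}$ (the paper's Case~2.2). There the surgery would disconnect the complex, so instead one argues directly: the connected component $A$ of $y_1$ in $T_{123}-(t\cup t')$ is a generalized annulus whose good-gluing quotient is a spiky triangle filling $\triangle(x_1,x_2,x_3)$, yet $\omega\notin A$ while $f(\omega)=o\in\bt{x_1,x_2,x_3}$; this contradicts \cref{triangles are discs}. Without this separate contradiction, the minimality argument does not close.
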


\begin{proof}
Assume towards contradiction that $f$ is not near-immersion. Then, as $f$ is injective on 2-cells, there are twin cells in $T_{ijk}$, i.e. 2-cells $t,t'$ that share a 1-cell and such that $f(t)=f(t')$.

If $t,t'\subseteq D_{ij}$ for some $i,j$ we will commence as in the proof of \cref{triangles are discs}, that is, define $C_{ij}$ the connected component  of $\omega$ in $D_{ij}-\inner(t\cup t')/\sim$, where $\sim$ is the cellular equivalence relation defined by $u\sim v$ if $u$ a 1-cell in $\partial t$  and $v$ a 1-cell in $\partial t'$ and $f(u)=f(v)$, as in \cref{triangles are discs}. Define $C_{jk}=D_{jk}$ for $(j,k)\neq (i,j)$. We get $\set{C_{ij}}_{1\leq i<j\leq 4}$ is a new deflated tetrahedron with fewer cells, as all is still satisfied ($T_{123}$ is still a spiky triangle as explained in the proof of \cref{triangles are discs}), and the induced map from $f$ is a filling map, so we get a contradiction.

Hence $t,t'$ must be in different cell complexes. Without loss of generality assume that $t\subseteq D_{12},t'\subseteq D_{13}$.

We would like to ``fold'' $t,t'$ and insert the folded copy to $D_{14}$, as in \cref{fig:folding}.

\begin{figure}[H]
    \centering
    \includegraphics[]{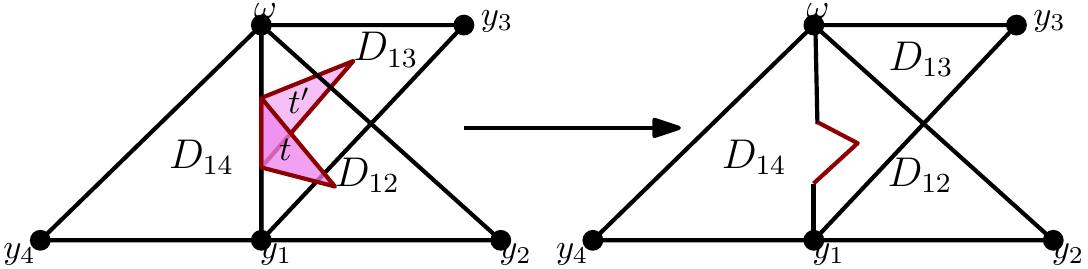}
    \caption{Folding of $t,t'$}
    \label{fig:folding}
\end{figure}

We divide into cases depending on $t\cap t'$:

\textbf{Case 1. The intersection $t\cap t'$ is an interval (one or two 1-cells).} \label{case 1}

Define $C_{12}$ to be a complex obtained from $D_{12}$ removing $\inner(t)$ and the open interval of $t\cap t'$. Similarly, define $C_{13}$ to be the complex  obtained from $D_{12}$ removing $\inner(t')$ and the open interval of $t\cap t'$. Set $C_{14}=D_{14}\cup t$ and $C_{ij}=D_{ij}$ for $2\leq i<j$.
For notation purposes $C_{ij}=C_{ji}$.

Denote by $f':\coprod_{1\leq i<j\leq 4} C_{ij}\to X$ the induced map from $f:\coprod_{1\leq i<j\leq 4} D_{ij}\to X$.
We claim that $(\set{C_{ij}}_{1\leq i<j\leq 4},f)$ is a deflated tetrahedron filling $x_1,x_2,x_3,x_4$ through $o$ and that will give us contradiction to minimality of $(\set{D_{ij}}_{1\leq i<j\leq 4},f)$.

We have that $C_{12}$ and $C_{13}$ are deformation retracts of $D_{12}$ and $D_{13}$ respectively and $D_{14}$ is a deformation retract of $C_{14}$, hence they are planar, finite, contractible $\Delta$-complexes.
Choose $y_i,y_j,\omega$ to be the same marked points as in $D_{ij}$ and get that $\set{C_{ij}}_{1\leq i<j\leq 4}$ are generalized triangles.

The induced map between $\overline{y_io}^{C_{ij}}$ to $\overline{y_io}^{C_{ik}}$ is a bijection.

Denote by $T'_{ijk}$ the gluing induced by the bijections on the boundary parts of $C_{ij},C_{jk},C_{ik}$. $T'_{ijk}$ are spiky triangles for $1\leq i<j<k\leq 4$:
indeed, $T'_{124}=T_{124}$ by working out the definitions of $C_{ij}$. We have $T'_{234}=T_{234}$ as $C_{ij}=D_{ij}$ for the relevant cell complexes. As in the proof of \cref{triangles are discs}, $T'_{123} = T_{123}-(\inner(t\cup t')/\sim$ where $\sim$ identifies the boundary of $t\cup t'$ to an interval, so $T'_{123}$ is a spiky triangle. Lastly, $T'_{134}=(T_{134}-t') \cup t/\sim$ is a spiky triangle, as topologically we deleted a disc and glued another disc instead.

Clearly, $f'$ is a filling map. This gives a contradiction to minimality.

\textbf{Case 2. The intersection $t\cap t'$ is a disjoint union of a 1-cell and 0-cell $v$.} 

There are two further cases, either $v\in \overline{y_1\omega}$, as in \cref{fig:first}, or $v\in\overline{y_2\omega}$, as in \cref{fig:second} (where in this case it happens to be that $D_{23}$ has a skinny part).

\textbf{Case 2.1. $v\in \overline{y_1\omega}$.} 

\begin{figure}[htp]
    \centering
    \includegraphics[]{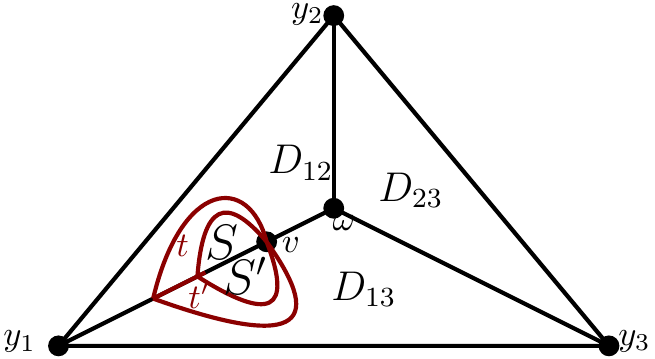}
    \caption{Case 2.1.}
    \label{fig:first}
\end{figure}
We have that $D_{12}-t$ has two connected components. Denote by $C_{12}$ the closure of the connected component containing $\omega$ and by $S$ the closure of the other connected component and let $T=S\cup t$.
Define $C_{13},S'$ similarly to be the connected components of $D_{13}-t'$ containing $\omega$ and not containing $\omega$ respectively and let $T'=S'\cup t'$. Set $C_{14}= D_{14}\cup T$ and $C_{ij}=D_{ij}$ for $2\leq i<j$.   
Let $f'$ be the induced map. 

We argue identically to case 1, with $T,T'$ instead of $t,t'$, to show that $(\set{C_{ij}}_{1\leq i<j\leq 4},f')$ is a deflated tetrahedron filling $x_1,x_2,x_3,x_4$ through $o$.
This is again a contradiction to minimality.

\textbf{Case 2.2. $v\in\overline{y_2\omega}$.} 

\begin{figure}[htp]
    \centering
    \includegraphics[]{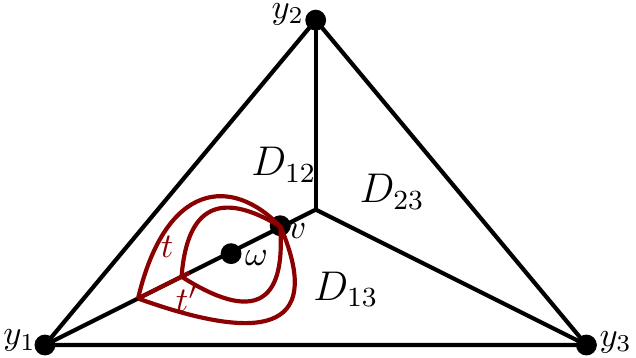}
    \caption{Case 2.2.}
    \label{fig:second}
\end{figure}

Let $A$ be the connected component of $y_1$ in $T_{123}-(t\cup t')$. It is a generalized annulus with spikes and $\omega\notin A$, as in \cref{triangles are discs}. Consider $\sim$ the cellular equivalence relation defined by $u\sim v$ if $u$ a 1-cell in $\partial t$  and $v$ a 1-cell in $\partial t'$ and $f(u)=f(v)$, as in \cref{triangles are discs}. It is a good gluing on $A$ and so from \cref{gluing an annulus}, $A/\sim$ is a spiky triangle, in particular, it is simply connected. The map $\bar{f}$ induced from $f$ maps the boundary of $A$ homeomorphically onto $\triangle(x_1,x_2,x_3)$, hence $\bt{x_1,x_2,x_3}\subseteq \bar{f}(T)$ which is a contradiction to $f$ being a filling map and satisfying $f|_{C_{ij}}^{-1}(\set{o})=\set{\omega}$ and the fact that $\omega\notin A$ while $f(\omega)=o\in \bt{x_1,x_2,x_3}$.
\end{proof}

\begin{corollary} \label{local injectivity for min tetra}
For $(\set{D_{ij}}_{1\leq i<j\leq 4},f)$ a minimal tetrahedron filling $x_1,x_2,x_3,$ $x_4$, the map $f:Z\to X$ is a near-immersion.
\end{corollary}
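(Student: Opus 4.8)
The plan is to derive this directly from \cref{local injectivity for triangles}: that proposition already shows each induced map $f\colon T_{ijk}\to X$ is a near-immersion, and $Z$ is the union of the four sub-complexes $T_{123}, T_{124}, T_{134}, T_{234}$, so it suffices to check that the two requirements in the definition of a near-immersion for $f\colon Z\to X$ — injectivity on each cell and absence of twin cells — can each be verified inside a single $T_{ijk}$.

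For injectivity on cells, I would note that every cell of $Z$ is the image of a cell of some $D_{ij}$ under the cellular quotient map $\coprod_{1\le i<j\le 4} D_{ij}\to Z$, that this map restricts to an injection on each cell of each $D_{ij}$ (the identifications generated by the gluing bijections only match a boundary cell of one $D_{ij}$ with a boundary cell of another, and never two cells of the same $D_{ij}$), and that $f$ is injective on every cell of $\coprod_{1\le i<j\le 4} D_{ij}$ by the definition of a filling map; hence $f$ is injective on every cell of $Z$.

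The remaining point is that $f\colon Z\to X$ has no twin cells. Suppose $t\neq t'$ are $2$-cells of $Z$ sharing a $1$-cell $e$ with $f(t)=f(t')$. Since the $D_{ij}$ are glued to one another only along the arcs $\overline{y_m\omega}$, every $2$-cell of $Z$ is contained in a unique $D_{ij}$; write $t\subseteq D_{ij}$ and $t'\subseteq D_{kl}$. If $\set{i,j}=\set{k,l}$, choose any $m\notin\set{i,j}$; then $t$, $t'$ and $e$ all lie in $T_{ijm}$, so $t,t'$ are twin cells of $T_{ijm}$, contradicting \cref{local injectivity for triangles}. If $\set{i,j}\neq\set{k,l}$, then $e$ arises from an identification of a $1$-cell of $D_{ij}$ with a $1$-cell of $D_{kl}$, which can only occur along a common arc $\overline{y_m\omega}$; thus $m\in\set{i,j}\cap\set{k,l}$, and writing $\set{i,j}=\set{m,a}$ and $\set{k,l}=\set{m,b}$ with $a\neq b$, the cells $t$, $t'$ and $e$ all lie in $T_{mab}$, again a twin-cell contradiction. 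Hence $f\colon Z\to X$ has no twin cells and is a near-immersion.

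I expect the only delicate point to be the combinatorial bookkeeping in the last paragraph: one must use precisely which boundary arcs of the $D_{ij}$ are glued — only the $\overline{y_m\omega}$'s, and $\overline{y_m\omega}$ appears in exactly the three complexes $D_{ij}$ with $m\in\set{i,j}$ — to guarantee that any two $2$-cells of $Z$ meeting along a $1$-cell are contained in a common $T_{ijk}$. Granting that, the contradiction with \cref{local injectivity for triangles} is immediate, and everything else is inherited verbatim.
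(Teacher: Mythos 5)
Your reduction to \cref{local injectivity for triangles} is the right first move (and is exactly the paper's), and your case $\{i,j\}=\{k,l\}$ and the ``share one index'' case are handled correctly, but there is a genuine gap in the casework: you have silently assumed that two $2$-cells of $Z$ sharing a $1$-cell must lie in faces with a common index, i.e.\ that $\{i,j\}\cap\{k,l\}\neq\emptyset$. That is false in general, and the remaining case $\{i,j,k,l\}=\{1,2,3,4\}$ (opposite faces, say $t\subseteq D_{12}$, $t'\subseteq D_{34}$) is exactly the one that \cref{local injectivity for triangles} cannot rule out, because no single $T_{abc}$ contains both $t$ and $t'$.

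The reason the opposite-face case can occur is that $\sim$ is the equivalence relation \emph{generated} by the arc bijections, so transitivity matters. If some $D_{13}$ has a spike at $\omega$, then near $\omega$ the images of $\overline{y_1\omega}^{D_{13}}$ and $\overline{y_3\omega}^{D_{13}}$ coincide; a $1$-cell there is identified via $\overline{y_1\omega}$ with a $1$-cell of $D_{12}$, and via $\overline{y_3\omega}$ with a $1$-cell of $D_{34}$. Since spike $1$-cells carry no $2$-cells of $D_{13}$, this chain produces a $1$-cell of $Z$ whose two incident $2$-cells lie in $D_{12}$ and $D_{34}$. Your statement that such an identification ``can only occur along a common arc $\overline{y_m\omega}$'' only accounts for the generating relations, not their compositions. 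The paper closes this case with a different ingredient: if $t\subseteq D_{ij}$ and $t'\subseteq D_{kl}$ are twins with $\{i,j,k,l\}=\{1,2,3,4\}$, then $f(t)=f(t')$ lies in every $\bt{x_a,x_b,x_c}$, hence in $\bigcap_{1\le a<b<c\le 4}\bt{x_a,x_b,x_c}$, which by \cref{discreteness} is at most $1$-dimensional -- contradicting that $f(t)$ is a $2$-cell. You need this appeal to \cref{discreteness}; the purely combinatorial argument does not suffice.
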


\begin{proof}
$f$ is injective on cells by assumption. Assume towards contradiction that there are $t,t' \subseteq Z$ twin cells. As $f|_{T_{ijk}}$ is injective, by \cref{local injectivity for triangles} we have that $t\subseteq D_{ij},t'\subseteq D_{kl}$ for $\set{i,j,k,l}=\set{1,2,3,4}$. From this, we have $f(t)=f(t')\subseteq \bigcap_{1\leq i<j<k\leq 4}\bt{x_i,x_j,x_k}$, but $f(t)$ is a 2-cell in $X$ and this is a contradiction to the fact that the intersection is a 1-dimensional complex, by
\cref{discreteness}.
\end{proof}

\begin{corollary} \label{injectivity on D}
Assume $x_1,x_2,x_3,x_4\in X$ do not form a lanky quadrilateral. If $(\set{D_{ij}}_{1\leq i<j\leq 4},f)$ is a minimal tetrahedron filling $x_1,x_2,x_3,x_4$, then each $f|_{D_{ij}}:D_{ij}\to X$ is injective.
\end{corollary}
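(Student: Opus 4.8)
The plan is to bootstrap from \cref{local injectivity for triangles}, which already gives that $f$ is injective on each of the spiky triangles $T_{ijk}$, together with the observation that each disc $D_{ij}$ sits inside one such $T_{ijk}$ as a subcomplex.

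Fix $1\le i<j\le 4$ and pick $k\in\set{1,2,3,4}-\set{i,j}$. The first step is to check that the map $\iota\colon D_{ij}\to T_{ijk}$ induced by the gluing is injective. This is where I expect the only real care to be needed. The complex $T_{ijk}$ is built from $D_{ij},D_{jk},D_{ik}$ by identifying the boundary arcs $\overline{y_\bullet\omega}$, and every elementary identification glues a point of one of these three discs to a point of a \emph{different} one; so the only way the transitive closure of these identifications could identify two distinct points of $D_{ij}$ is by running a chain through the spikes at the common vertex $\omega$. Such a chain returns a point of $D_{ij}$ to itself precisely because, by the definition of a deflated tetrahedron, $T_{ijk}$ is homeomorphic to a spiky triangle, i.e. the three pieces glue up consistently along the spikes at $\omega$. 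Hence $\iota$ is an embedding, compatible with the maps to $X$, so that $f|_{D_{ij}}=\big(f|_{T_{ijk}}\big)\circ\iota$.

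With that in hand the conclusion is immediate: by \cref{local injectivity for triangles} the map $f\colon T_{ijk}\to X$ is a near-immersion and hence, by \cref{local link for spiky}, injective, so $f|_{D_{ij}}$, being a composition of injective maps, is injective. The role of the hypothesis that $x_1,x_2,x_3,x_4$ do not form a lanky quadrilateral is to keep us in the setting of \cref{local injectivity for triangles}; in the few genuinely degenerate situations that can still arise, namely when $\bt{x_i,x_j,o}$ is a tripod or a segment, $D_{ij}$ is a tree and $f|_{D_{ij}}$ is visibly injective, since $f$ restricts to a homeomorphism on each of the arcs $\overline{y_iy_j},\overline{y_j\omega},\overline{y_i\omega}$ and these three images can only overlap at the image of the branch point of the tree.
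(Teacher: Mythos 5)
There is a real gap here, and it is exactly at the step you flag as ``the only real care to be needed.'' The map $\iota\colon D_{ij}\to T_{ijk}$ is \emph{not} automatically injective, and the fact that $T_{ijk}$ is a spiky triangle does not by itself force the gluing chain to return a boundary point of $D_{ij}$ to itself. The elementary identifications glue arcs of the \emph{boundary cycles}, and a chain of identifications can leave $D_{ij}$ along $\overline{y_i\omega}$, pass through a skinny part of $D_{ik}$ near $\omega$ (where $\overline{y_i\omega}^{D_{ik}}$ and $\overline{y_k\omega}^{D_{ik}}$ coincide pointwise in $D_{ik}$ but occupy different arcs of the boundary cycle), continue through a skinny part of $D_{jk}$, and re-enter $D_{ij}$ along $\overline{y_j\omega}$ at a \emph{different} boundary point. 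This is precisely the situation depicted in \cref{fig:non injectivity}, and nothing in the definition of a deflated tetrahedron rules it out. In particular, this is not ruled out by your final remark either: you only address the degenerate case in which $D_{ij}$ itself is a tree, but the problematic identifications occur when $D_{ij}$ is a genuine 2-dimensional generalized disc and $D_{ik},D_{jk}$ carry skinny parts.

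The paper's proof treats exactly this failure mode and disposes of it using facts you never invoke. In the case where the two 1-cells $e_1\subseteq\overline{y_1\omega}$, $e_2\subseteq\overline{y_2\omega}$ adjacent to $\omega$ in $\partial D_{12}$ become identified, one observes that $e_1$ lies in three of the $T_{ijk}$'s and $e_2$ in the fourth, so $f(e_1)=f(e_2)\subseteq\bigcap\bt{x_i,x_j,x_k}$; this contradicts \cref{discreteness}, which says (under the no-lanky-quadrilateral hypothesis) that this intersection is finite, while $f|_{e_1}$ is injective on a 1-cell. In the other case, a pair of identified boundary points bounds a disc in $T_{123}$, one finds a matching disc in $T_{124}$, and these close up to a sphere in $Z$; by \cref{local injectivity for min tetra} one could then pull back the CAT(0) structure to that sphere, a contradiction. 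Thus \cref{discreteness}, the near-immersion property of $f$ on all of $Z$, and the sphere argument are all essential, and none appear in your proposal. Relatedly, your attribution of the no-lanky-quadrilateral hypothesis is wrong: \cref{local injectivity for triangles} holds without it; the hypothesis enters only through \cref{discreteness}, in the first of the two cases above.
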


\begin{proof}
Without loss of generality $(i,j)=(1,2)$.
As $f|_{T_{123}}$ is injective we have that if $f|_{D_{12}}$ is not injective then it must be that there are two points on the boundary which are identified, that is, $D_{23},D_{13}$ have a skinny part, as in \cref{fig:non injectivity}. 

\begin{figure}[H]
    \centering
    \includegraphics[]{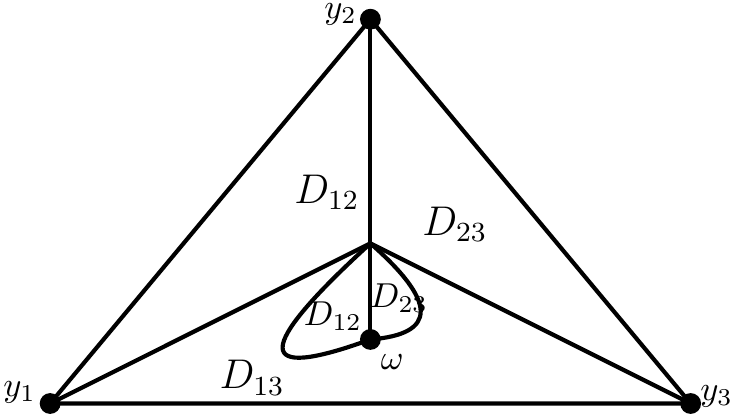}
    \caption{non injectivity of $D_{12}$}
    \label{fig:non injectivity}
\end{figure}

There are two cases, either $e_1,e_2$, the two 1-cells adjacent to $\omega$ in the boundary of $D_{12}$, are identified in $T_{123}$, or not. In the latter case, we consider the pair of points in $D_{12}$ which are identified and are closest to $\omega$ and get that $D_{12}$ bounds a disc in $T_{123}$.

\textbf{Case 1. $e_1,e_2$ are identified in $T_{123}$. }

Assume $e_1\subseteq \overline{y_1\omega}^{12},e_2\subseteq \overline{y_2\omega}^{12}$. Note that $e_1\subseteq T_{124}, T_{123},T_{134}$ and $e_2\subseteq T_{234}$ so $f(e_1)=f(e_2)\subseteq \bigcap_{1\leq i<j<k\leq 4} f(T_{ijk})= \bigcap_{1\leq i<j<k\leq 4} \bt{x_i,x_j,x_k}$. By \cref{discreteness}, we have that $f(e_1)$ is discrete, which is a contradiction to $f|_{e_1}$ is injective.

\textbf{Case 2. $D_{12}$ bounds a disc in $T_{123}$. }

The disc $D_{12}$ bounds in $T_{123}$ is a subset of $D_{13}\cup D_{23}$.
As $f|_{T_{124}}$ is injective we must have that under gluing the boundaries the same points are identified in $T_{124}$ which gives us a disc in $D_{14}\cup D_{24}\subseteq T_{124}$. In $Z$ these two discs glue to a sphere. By \cref{local injectivity for min tetra}, $f$ restricted to that sphere is a near-immersion which is a contradiction, as we can pull back the CAT(0) structure from $X$ to the sphere.
\end{proof}

\begin{proposition}\label{minimal deflated is injective}
Assume $x_1,x_2,x_3,x_4\in X$ do not form a lanky quadrilateral and $(\set{D_{ij}}_{1\leq i<j\leq 4},f)$ a minimal deflated tetrahedron filling $x_1,x_2,x_3,x_4$, then $f:Z\to X$ is injective.
\end{proposition}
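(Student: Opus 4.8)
The plan is to argue by contradiction and first reduce a putative failure of injectivity of $f\colon Z\to X$ to an identification of points of two \emph{opposite} faces; the contradiction will then come from the fact that $\bigcap_{1\le i<j<k\le 4}\bt{x_i,x_j,x_k}$ is $0$-dimensional (\cref{discreteness}), via an analysis in the link of the common image point.

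\textbf{Step 1 (reduction to opposite faces).} Suppose $f(p)=f(q)=:a$ with $p\neq q$ in $Z$. By \cref{injectivity on D}, $p,q$ do not both lie in a single $D_{ij}$; by \cref{local injectivity for triangles} they do not both lie in a pair of faces $D_{ij},D_{ik}$ sharing a vertex-index, since such a pair lies in $T_{ijk}$. Hence, after relabelling, $p\in D_{12}$, $q\in D_{34}$. If $p$ lay on an internal edge $\overline{y_1\omega}$ or $\overline{y_2\omega}$ of $D_{12}$ it would also lie in $D_{13}$ (resp. $D_{23}$), hence together with $q$ in a common $T_{1jk}$, again contradicting \cref{local injectivity for triangles}; likewise for $q$. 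Since the filling-map condition gives $f|_{D_{ij}}^{-1}(\{o\})=\{\omega\}$, if $a=o$ then $p=\omega=q$ in $Z$. So it suffices to show that for every opposite pair, $f(D_{ij})\cap f(D_{kl})=\{o\}$; equivalently, there is no $a\neq o$ in $f(D_{12})\cap f(D_{34})$, and in that case $p$, $q$ lie in the relative interiors of the triangle parts of $D_{12},D_{34}$ or on their free boundary arcs $\overline{y_1y_2},\overline{y_3y_4}$ (which, by \cref{injectivity on D}, carry no spikes).

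\textbf{Step 2 (the image intersection is finite).} Since $D_{12}\subseteq T_{123}\cap T_{124}$ and, by \cref{local injectivity for triangles} and \cref{local link for spiky}, $f(T_{123})=\bt{x_1,x_2,x_3}$, $f(T_{124})=\bt{x_1,x_2,x_4}$, we get $f(D_{12})\subseteq\bt{x_1,x_2,x_3}\cap\bt{x_1,x_2,x_4}$, and symmetrically $f(D_{34})\subseteq\bt{x_1,x_3,x_4}\cap\bt{x_2,x_3,x_4}$. Hence $a\in f(D_{12})\cap f(D_{34})\subseteq\bigcap_{1\le i<j<k\le 4}\bt{x_i,x_j,x_k}$, which is a finite set by \cref{discreteness}, as $x_1,x_2,x_3,x_4$ do not form a lanky quadrilateral. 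In particular $f(D_{12})\cap f(D_{34})$ is $0$-dimensional.

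\textbf{Step 3 (local reduction).} Pick a small ball $B$ around $a$ with $\partial B\cong\lk(a)$. As $f|_{D_{12}},f|_{D_{34}}$ are embeddings and $p,q$ avoid the spiky and internal parts, \cref{triangle in link 1}, \cref{triangle in link 2} and \cref{homotopies in link} identify $B\cap f(D_{12})$ and $B\cap f(D_{34})$ with the cones, with apex $a$, over two curves $C_{12},C_{34}\subseteq\lk(a)$ — namely (the essential parts of) the $\lk(a)$-projections of $\triangle(x_1,x_2,o)$ and $\triangle(x_3,x_4,o)$, or of these triangles with $a$ deleted — which are genuinely non-trivial (a circle, or a non-degenerate arc) because $a$ lies in both full triangles. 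If $C_{12}\cap C_{34}\neq\emptyset$, then $f(D_{12})\cap f(D_{34})$ contains the geodesic cone over a point of $C_{12}\cap C_{34}$, i.e. a non-degenerate segment, contradicting Step 2. So the whole statement reduces to: $C_{12}\cap C_{34}\neq\emptyset$.

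\textbf{Step 4 (the essential parts meet) — the main obstacle.} Here one performs a case analysis on the combinatorial type of $a$, using that $a\in\bigcap_{i<j<k}\bt{x_i,x_j,x_k}$: by \cref{2-cell} and \cref{one cell}, $a\in X^{(0)}$, or $a$ lies on a side crossing a $1$-cell transversally, or on the intersection of two opposite sides — the latter two forcing certain $\hat x_\bullet$ to be antipodal in $\lk(a)$. The key extra input is that $C_{12}$ and $C_{34}$ are essential parts of \emph{geodesic} triangles through the common vertex $\hat o=\pi_a(o)$, so they cannot avoid one another: when $a\notin X^{(1)}$, $\lk(a)$ is a circle of length $2\pi$ and two such essential curves each cover it; when $a$ lies on a side running along its $1$-cell, $C_{12}$ and $C_{34}$ both pass through a pole of $\lk(a)$; in the remaining cases the length estimates of \cref{path in link} (as in the closing computation of \cref{intersection of two triangles}, where a half-longitude together with a geodesic of length $<\pi$ forms an essential loop) pin down enough of $\lk(a)$ to locate a common point of $C_{12}$ and $C_{34}$, typically on the arc from $\hat o$ towards the antipodal pair. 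The genuinely delicate parts are $a\in X^{(0)}$ and the ``off-diagonal'' case where $a$ lies on a side such as $\overline{x_1x_3}$: then $C_{12}$ and $C_{34}$ share no side of their defining triangles, and one must exploit both the common vertex $\hat o$ and the antipodality to force the intersection. Granting $C_{12}\cap C_{34}\neq\emptyset$ for every opposite pair, $f\colon Z\to X$ is injective.
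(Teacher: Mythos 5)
Your Steps~1 and~2 are correct and match the paper's reduction: any failure of injectivity must identify a point of $D_{12}$ with a point of $D_{34}$, and the common image $a$ then lies in $\bigcap_{1\le i<j<k\le 4}\bt{x_i,x_j,x_k}$, which is finite by \cref{discreteness}. But Steps~3--4 take a route that you yourself flag as incomplete, and the gap is genuine. You try to derive a contradiction by showing that the essential parts $C_{12},C_{34}\subseteq\lk(a)$ of the projected triangles $\triangle(x_1,x_2,o)$ and $\triangle(x_3,x_4,o)$ must intersect. Those two triangles share only the vertex $o$, and the paper has no lemma saying that two full triangles sharing only a vertex cannot meet at an isolated point; indeed your Step~4 is essentially an attempt to prove such a lemma from scratch by a case analysis in $\lk(a)$, and you explicitly leave the hard cases (``$a\in X^{(0)}$'' and the ``off-diagonal'' side case) unresolved. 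As written, the argument does not close.

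The piece you are missing is that you do not need a new ``shared-vertex'' lemma: the already-proved \cref{intersection of two triangles} applies to a different pair of triangles. Since $p\in D_{12}\subseteq T_{124}$ and $q\in D_{34}\subseteq T_{134}$, the common value $a=f(p)=f(q)$ lies in $\bt{x_1,x_2,x_4}\cap\bt{x_1,x_3,x_4}$, and these two \emph{large} triangles share the whole side $\overline{x_1x_4}$. By \cref{intersection of two triangles}, $a$ is not isolated in that intersection, so there is a $1$- or $2$-cell $s\subseteq D_{12}$ through $p$ with $f(s)\subseteq\bt{x_1,x_2,x_4}\cap\bt{x_1,x_3,x_4}$. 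The injectivity of $f$ on $T_{123}$ and $T_{124}$ (from \cref{local injectivity for triangles}) then forces $f(s)\subseteq f(D_{34})$, hence $f(s)\subseteq\bigcap_{1\le i<j<k\le 4}\bt{x_i,x_j,x_k}$, a positive-dimensional set inside a finite set --- contradiction. This sidesteps your Step~4 entirely. The moral is that the opposite faces $D_{12}$ and $D_{34}$ of the deflated tetrahedron each sit inside two of the big triangles $T_{ijk}$, and any pair one from each side always shares one of the edges $\overline{x_1x_3},\overline{x_1x_4},\overline{x_2x_3},\overline{x_2x_4}$; choosing that shared edge is what lets \cref{intersection of two triangles} do the work you were attempting to redo in the link.
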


\begin{proof}
By \cref{local injectivity for min tetra} and \cref{injectivity on D}, $f|_Z$ is a near-immersion and $f|_{D_{ij}}$ is injective.

Assume towards contradiction $p,q\in Z$ distinct and $f(p)=f(q)$. Without loss of generality $p\in D_{12}$.
If $q\in \bigcup_{(i,j)\neq (3,4)}D_{ij}$ there exists $i<j<k$ such that $p,q\in T_{ijk}$ are distinct. That is a contradiction by \cref{local injectivity for triangles} that states that $f|_{T_{ijk}}$ is injective.

So it must be that $q\in \inner(D_{34})\cup \overline{y_3y_4}$ and similarly $p\in \inner(D_{12})\cup \overline{y_1y_2}$.
We get $f(p)=f(q)\in \bt{x_1,x_2,x_4}\cap \bt{x_1,x_3,x_4}$, so by \cref{intersection of two triangles} we have that the connected component of $f(p)$ in $\bt{x_1,x_2,x_4}\cap \bt{x_1,x_3,x_4}$ is more than a point, i.e. there is a 1-cell or 2-cell $s\subseteq D_{12}$ containing $p$ such that $f(s)\subseteq \bt{x_1,x_3,x_4}$. But from the injectivity of $f|_{T_{123}},f|_{T_{124}}$ we get that $f(s)\subseteq f(D_{34})$, hence $f(s)\subseteq \bigcap_{1\leq i<j<k\leq 4} \bt{x_i,x_j,x_k}$. This is a contradiction to \cref{discreteness} that states that the intersection is discrete, as $f|_s$ is injective. Hence $f:Z\to X$ is injective.
\end{proof}

Now we can complete the proof of \cref{main theorem} in the case where $x_1,x_2,x_3,x_4\in X$ do not form a lanky quadrilateral,:

\begin{theorem*}
Assume $x_1,x_2,x_3,x_4\in X$ do not form a lanky quadrilateral, then the intersection of $\bt{x_i,x_j,x_k}$ for ${1\leq i<j<k\leq 4}$ is a single point.
\end{theorem*}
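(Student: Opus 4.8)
The plan is to combine the two main results developed so far. By \cref{non-empty intersection}, the intersection $\bigcap_{1\le i<j<k\le 4}\bt{x_i,x_j,x_k}$ is non-empty, so we may pick a point $o$ in it. By \cref{deflated tetrahedron filling exists}, there exists a deflated tetrahedron filling $x_1,x_2,x_3,x_4$ through $o$, and by minimizing the number of cells we obtain a minimal tetrahedron $(\set{D_{ij}}_{1\leq i<j\leq 4},f)$ filling $x_1,x_2,x_3,x_4$. Since the four points do not form a lanky quadrilateral, \cref{minimal deflated is injective} tells us that the induced map $f:Z\to X$ is injective, where $Z=\coprod D_{ij}/\sim$ is the glued complex.

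Next I would identify the intersection on the source side. The complex $Z$ is built so that each $T_{ijk}$ (the gluing of $D_{ij},D_{jk},D_{ik}$) is a spiky triangle, and by \cref{local injectivity for triangles} each embeds in $Z$ with image $f(T_{ijk})=\bt{x_i,x_j,x_k}$ (using \cref{triangles are discs}: $f|_{T_{ijk}}$ is an injective map from an abstract full spiky triangle restricting to a homeomorphism on the boundary, hence has image $\bt{x_i,x_j,x_k}$). Because $f$ is injective on all of $Z$, we get
\[
\bigcap_{1\leq i<j<k\leq 4}\bt{x_i,x_j,x_k}=\bigcap_{1\leq i<j<k\leq 4} f(T_{ijk})=f\Bigl(\bigcap_{1\leq i<j<k\leq 4} T_{ijk}\Bigr),
\]
so it suffices to show that the four subcomplexes $T_{ijk}$ of $Z$ meet in exactly one point. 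The only point common to all six cells $D_{ij}$ is $\omega$, since for any two indices $\{i,j\}\ne\{k,l\}$ the triangles $T_{ijl}$ and $T_{ikl}$ share only $D_{il}$ together with $\omega$... more precisely, $T_{ijk}\cap T_{ijl}=D_{ij}$ as subcomplexes of $Z$ (they share exactly the face $D_{ij}$), and then $T_{123}\cap T_{124}\cap T_{134}\subseteq D_{12}\cap D_{13}\cap D_{14}=\overline{y_1\omega}^{12}\cap\overline{y_1\omega}^{13}\cap\overline{y_1\omega}^{14}$, which after also intersecting with $T_{234}\subseteq D_{23}\cup D_{24}\cup D_{34}$ collapses to $\{\omega\}$. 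Hence $\bigcap T_{ijk}=\{\omega\}$ and its image is the single point $o$.

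The main obstacle — already handled by the machinery above — is the injectivity of $f:Z\to X$; without it, distinct points of $Z$ could collide and enlarge the intersection, and the whole argument that the intersection is discrete (\cref{discreteness}) feeds into that via \cref{minimal deflated is injective}. The remaining work in this final step is just the bookkeeping that the combinatorial intersection of the four spiky-triangle subcomplexes $T_{ijk}$ inside $Z$ is exactly $\{\omega\}$, which follows directly from how $Z$ is assembled from the six generalized triangles $D_{ij}$ along their $\overline{y_i\omega}$ boundary arcs. Combined with \cref{non-empty intersection} for non-emptiness, this shows $\bigcap_{1\leq i<j<k\leq 4}\bt{x_i,x_j,x_k}=\{o\}$, completing the non-lanky case of \cref{main theorem}.
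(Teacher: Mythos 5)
Your setup is the same as the paper's and is correct up to the point where you need to show that $\bigcap_{1\le i<j<k\le 4} T_{ijk}=\{\omega\}$ inside $Z$; but this step is \emph{not} trivial bookkeeping, and your dismissal of it is a genuine gap. The issue is that a generalized triangle $D_{ij}$ may a priori have a spike at $\omega$, i.e.\ the boundary arcs $\overline{y_i\omega}^{ij}$ and $\overline{y_j\omega}^{ij}$ may coincide on a nondegenerate interval $[\omega,\omega']$ inside $D_{ij}$. If, say, $D_{12}$ has such a spike, then every point $q$ of it lies in $\overline{y_1\omega}\cap\overline{y_2\omega}$, hence in $D_{12},D_{13},D_{14},D_{23},D_{24}$ simultaneously, and therefore in all four $T_{ijk}$. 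So $\bigcap T_{ijk}$ could be an interval, not $\{\omega\}$, and your chain of claims -- in particular ``$T_{ijk}\cap T_{ijl}=D_{ij}$ as subcomplexes'' and ``the only point common to all six $D_{ij}$ is $\omega$'' -- fails as stated once spikes at $\omega$ and the transitive identifications they induce through chains of gluings are taken into account.

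What the paper actually does at this point is prove that spikes at $\omega$ cannot occur. After reducing to $q\in\overline{y_1\omega}\cap\overline{y_2\omega}\subseteq D_{12}$ with $q\ne\omega$, it applies \cref{intersection of two triangles} to $f(q)\in\bt{x_1,x_3,x_4}\cap\bt{x_2,x_3,x_4}$ to produce a $1$-cell $e\ni q$ in $Z$ with $f(e)$ contained in that intersection; injectivity of $f|_Z$ forces $e\subseteq D_{34}$ or $e\subseteq\overline{y_1\omega}\cap\overline{y_2\omega}$, and each case is killed separately -- the first by a Van Kampen argument contradicting $o\in\bt{x_1,x_2,x_3}$, the second by \cref{discreteness} since it would put a whole $1$-cell into $\bigcap\bt{x_i,x_j,x_k}$. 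Your proof needs this argument (or an equivalent one); without it you have not shown that $\bigcap T_{ijk}$ is a single point rather than an interval.
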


\begin{proof}
We will show that in a minimal deflated tetrahedron filling $x_1,x_2,x_3,x_4$, we have $\bigcap_{1\leq i<j<k\leq 4} T_{ijk}=\set{\omega}$. As $f:Z\to X$ is injective, $f(T_{ijk})=\bt{x_i,x_j,x_k}$, which will show that $\bigcap_{1\leq i<j<k\leq 4}\bt{x_i,x_j,x_k}=\set{f(\omega)}=\set{o}$. 

Assume $q\in \bigcap_{1\leq i<j<k\leq 4} T_{ijk}$. Without loss of generality $q\in D_{12}$. We get $q\in D_{12}\cap T_{134}= \overline{y_1\omega},q\in D_{12}\cap T_{234}= \overline{y_2\omega}$.
It suffices to show that $\omega$ is the only point that satisfies this, i.e. that $D_{ij}$ are actually discs with two spikes where $y_i,y_j$ are the endpoints of each spike.

Assume towards contradiction $q\neq \omega$.

Note $f(q)\in \bt{x_1,x_3,x_4}\cap\bt{x_2,x_3,x_4}$.
By \cref{intersection of two triangles}, there is a 1-cell $e$ in $Z$ containing $q$ such that $f(e)\subseteq \bt{x_1,x_3,x_4}\cap\bt{x_2,x_3,x_4}$.

From injectivity of $f$ on $Z$, the 1-cell $e$ must be in $D_{34}$ or in  $\overline{y_1\omega}\cap \overline{y_2\omega}$. 

\textbf{Case 1. $e$ in $D_{34}$.}

In this case $q\in D_{34}$, that is $q\in \overline{y_i\omega}$ for $i=3$ or $4$. Without loss of generality $q\in \overline{y_3\omega}$.
For $i,j\in \set{1,2,3}$ different $f|_{D_{ij}}$ is injective by \cref{injectivity on D} hence there exists a unique point $q_{ij}\in D_{ij}$ such that its image in $T_{123}$ is $q$.

    Note $q_{ij}$ separates $D_{ij}$ for every $1\leq i<j\leq 4$. Consider $C_{ij}$ the connected component of $y_i$ in $D_{ij}-\set{q_{ij}}$. Their image in $T_{123}$ is simply connected by Van Kampen. This is a contradiction to $o\in \bt{x_1,x_2,x_3}$.

\textbf{Case 2. $e$ in  $\overline{y_1\omega}\cap \overline{y_2\omega}$.}

By choice of $e$, $f(e)\subseteq \bt{x_1,x_3,x_4}\cap \bt{x_2,x_3,x_4}$ and by the assumption in this case, $f(e)\subseteq \bt{x_1,x_2,x_3}\cap\bt{x_1,x_2,x_4}$, so $f(e)\subseteq \bigcap_{1\le i<j<k\le 4} \bt{x_i,x_j,x_k}$. This is a contradiction to the fact that the intersection is discrete, stated in \cref{discreteness} and the fact that $f|_e$ is injective.

\end{proof}

\section{Future directions}\label{sec:conclusion}

\paragraph{Higher dimensions.} 
It is natural to try and generalize this work to higher dimensions. 
To do this, one needs to generalize triangles and $n$-median spaces. We suggest the following inductive definition for the generalisation of triangles.

\begin{definition*}
Let $\blacktriangle^2(x_1,x_2,x_3)=\bt{x_1,x_2,x_3}$. Let $x_1,\dots ,x_{n+1}\in X$ where $X$ is a CAT(0) complex and define $\blacktriangle^n(x_1,\dots ,x_{n+1})$ to be the minimal ball filling $\bigcup_{i=1}^{n+1}\blacktriangle^{n-1}(x_1,\dots ,\hat{x}_i,\dots ,x_{n+1})$.
\end{definition*}
One can study intersections of $\blacktriangle^n(x_1,\dots ,x_{n+1})$ for all $x_1,\dots,x_{n+1}$ in a set of $n+2$ points. 



\paragraph{Median algebras.} We recall that for a set $M$ and a map $\mu:M^3\to M$ the pair $(M,\mu)$ is a \emph{median algebra} if for any $a,b,c,d,e\in M$ we have $\mu(a,b,c)=\mu(b,c,a)=\mu(b,a,c)$, $\mu(a,a,b)=a$ and $\mu(a,b,\mu(c,d,e))=\mu(\mu(a,b,c),\mu(a,b,d),e)$. A median space with the median map is clearly a median algebra. A natural question to ask is if there is a meaningful definition of ``2-median algebras'', such that 2-median spaces will satisfy this definition, and characterize them.

\paragraph{Coarse median.} 
A geodesic metric space $(x,d)$ is \emph{coarse median} if 

\begin{itemize}
    \item there is a map $\mu:X^3\to X$ and $k,l>0$ such that for any $a,b,c,a',b',c'\in X$ we have $d(\mu(a,b,c),\mu(a',b',c')<k(d(a,a')+d(b,b')+d(c,c'))+l$,
    \item there is a map $h:\mathbb{N}\to [0,\infty)$ such that if $A\subseteq X$ such that $\abs{A}\leq p$ then there exists a finite median algebra $(M,m)$ and maps $\pi:A\to M,\lambda:M\to A$ such that for all $a\in A$ $d(a,\lambda(\pi(a)))<h(p)$ and for all $a,b,c\in M$ $d(\mu(\lambda(a),\lambda(b),\lambda(c)),\lambda(m(a,b,c)))<h(p)$.
\end{itemize}  

It is natural to try to define coarse 2-median spaces and study which spaces are coarse 2-median. In particular, are rank-2 symmetric spaces coarse 2-median?

\appendix

\section{Appendix -- Fary Milnor for CAT(0) polygonal complexes}\label{sec: appendix}
In this appendix we will recover a special case of a result of  Stadler \cite{stadler2021}, proving a Fary-Milnor type theorem for CAT(0) polygonal complexes. Moreover, in this case, the image of the embedded disc is unique.

Recall that for a polygonal curve $\Gamma$ we define its \emph{total curvature} to be $$\kappa(\Gamma)=\sum_{i=1}^k(\pi-\angle_{x_i}\Gamma),$$ where $x_1,\ldots,x_k$ are the vertices of $\Gamma$.

\begin{theorem} [Fary-Milnor for CAT(0) polygonal complexes] \label{Fary-Milnor}
Let $X$ be a CAT(0) polygonal complex, and let $\Gamma$ be an embedded polygonal curve. If $\kappa(\Gamma)<4\pi$ then $\Gamma$ bounds an embedded disc in $X$.

The image of the disc is unique, by \cref{injective discs}.
\end{theorem}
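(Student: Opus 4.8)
The plan is to exhibit $\Gamma$ as the boundary of an embedded disc by first producing a \emph{near-immersed} filling of $\Gamma$ and then applying \cref{local link}.

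Since $X$ is CAT(0) it is simply connected (indeed contractible), so the embedded loop $\Gamma$ is null-homotopic. Subdividing $X$ so that $\Gamma$ is a subcomplex, a disc diagram for $\Gamma$ in the $2$-complex $X$ provides a finite $\Delta$-complex $\Delta$ homeomorphic to $\bbD^2$ and a cellular map $f\colon\Delta\to X$ that is injective on each cell and restricts to a homeomorphism $f|_{\partial\Delta}\colon\partial\Delta\to\Gamma$; I would fix such a pair $(\Delta,f)$ for which the number of $2$-cells of $\Delta$ is minimal.

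The main step is to show that minimality forces $f$ to be a near-immersion. By \cref{near-immersion} it remains to rule out twin cells. Suppose $t,t'$ are distinct $2$-cells sharing a $1$-cell $e$ with $f(t)=f(t')$. Since $f$ is injective on $t$ and on $t'$ and agrees on $e$, the equality $f(t)=f(t')$ induces an identification of $t$ with $t'$ fixing $e$; folding along it replaces the sub-disc $t\cup t'$ by a single triangle and strictly decreases the number of $2$-cells. One must check that the quotient is again a disc with boundary mapping homeomorphically onto $\Gamma$: because $f|_{\partial\Delta}$ is injective, no two distinct boundary $1$-cells share an image, so the fold never identifies boundary edges with one another, and when $t\cup t'$ meets $\partial\Delta$ the usual ``diamond'' variant of the move keeps $\partial\Delta$ an embedded circle precisely because $\Gamma$ is embedded. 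The resulting pair is a filling of $\Gamma$ with fewer $2$-cells, contradicting minimality; hence $f$ is a near-immersion.

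Now $f\colon\Delta\to X$ is a near-immersion from a disc with $f|_{\partial\Delta}$ a homeomorphism onto $\Gamma$ and $\kappa(\Gamma)<4\pi$, so \cref{local link} shows that $f$ is injective, whence $f(\Delta)$ is an embedded disc bounded by $\Gamma$. Uniqueness of the image is then exactly \cref{injective discs}: any embedded disc bounded by $\Gamma$ has image $\Gamma$ together with $\{p\in X-\Gamma : \Gamma\text{ is non-trivial in }\pi_1(X-p)\}$, which is independent of the filling. I expect the main obstacle to be this reduction step: checking carefully, across the possible configurations (twin cells sharing one or two $1$-cells, interior or meeting $\partial\Delta$, and the removal of any degenerate $2$-cells), that folding keeps $\Delta$ homeomorphic to a disc and keeps $\partial\Delta$ a faithful copy of $\Gamma$ — it is here, and essentially only here, that the embeddedness of $\Gamma$ is used.
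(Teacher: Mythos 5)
Your overall strategy is the same as the paper's: obtain a cellular filling of $\Gamma$ with a minimal number of cells, show that minimality forces the filling map to be a near-immersion, and then invoke \cref{local link} and \cref{injective discs} for injectivity and uniqueness. However, you explicitly leave as an ``expected obstacle'' precisely the step the paper spends almost its entire argument on, namely verifying that the reduction move produces another finite $\Delta$-complex structure on a disc with boundary still mapped homeomorphically onto $\Gamma$. The paper's proof is not just a careful bookkeeping of cases you have waved at: it first proves \cref{no loops}, that minimality forces $T^{(1)}$ to be a simple graph (no loop edges, no double edges), and only with this in hand does the annulus $A=$ closure of the component of $\partial T$ in $T-(\bar t\cup\bar t')$ have an embedded circle as its inner boundary, so that the identification $e_1\sim e_2^{-1}$ (resp.\ $e_1\sim d_2^{-1},\,d_1\sim e_2^{-1}$) is a good gluing in the sense of \cref{good gluing}, whence \cref{gluing an annulus} applies. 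Without that preliminary claim your ``diamond variant of the move'' can fail: if $t$ has a loop or a doubled edge, or if $t$ and $t'$ already share two edges, the naive identification of $t$ with $t'$ need not yield a surface, let alone a disc. You also fold down to a single triangle (keeping one copy of $t$), whereas the paper removes both $t$ and $t'$ and closes the annulus; either count decreases, but the paper's version avoids re-checking that the retained triangle embeds correctly. Finally, you omit the degenerate case $f(t)=f(\partial t)$ except in a parenthetical; the paper treats this as a separate case with its own good-gluing verification. In short, the proposal is a correct outline of the paper's argument, but the heart of the proof — the combinatorial verification that the fold is legal — is acknowledged rather than supplied, and the specific tools needed for it (simplicity of $T^{(1)}$, generalized annuli, good gluings, \cref{gluing an annulus}) do not appear.
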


We note that $\kappa(\triangle(x,y,z))<4\pi$.

\cref{Fary-Milnor} follows from \cref{local link}, if we show that there exists a near-immersion $f:\mathbb{D}^2\to X$ filling $\Gamma$. This is similar to Lemma 3.3 in \cite{Przytycki}.

\begin{proof}
By subdividing the cells of $X$, we may assume $X$ is a triangle complex, i.e. it has a simplicial structure.

As $X$ is simply connected, there exists a map $f:\bbD^2\to X$ such that  $f|_{\partial \bbD^2}$ maps ${\partial \bbD^2}$ homeomorphically onto $\Gamma$.
The homeomorphism $f|_{\partial \bbD^2}: \partial \bbD^2 \to \Gamma$ can be made cellular by subdividing $\partial \bbD^2$ and $X$. By the simplicial approximation theorem (see e.g. \cite{zeeman_1964}), there exists $T$,  a triangulation of $\bbD^2$, and a map $f':T \to X$ that is cellular with respect to some simplicial structure of $X$ and that maps $\partial T$ homeomorphically onto $\Gamma$.

Fix the simplicial structure on $X$ from above and consider the collection of all pairs $(h,D)$ where $D$ is a disc with some finite $\Delta$-complex structure and $h:D\to X$ is a cellular map, such that $h|_{\partial D}:\partial D \to \Gamma$ is a homeomorphism.
Consider $(f,T)$ in this collection such that the number of cells in  $T$ is minimal among all pairs in the collection. We claim $f$ is a near-immersion. 

To simplify future arguments we show first that $T$ has a simple structure - all 1-cells in $T^{(1)}$ are intervals (not loops) and there are no double 1-cells.

\begin{claim} \label{no loops}
The 1-skeleton of $T$ is a simple graph.
\end{claim}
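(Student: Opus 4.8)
The plan is to use the minimality of $(f,T)$: if $T^{(1)}$ were not a simple graph we would produce another pair in the same collection with strictly fewer cells. Suppose first that $e\subseteq T^{(1)}$ is a loop based at a vertex $v$. It cannot lie in $\partial T$: since $\Gamma$ is an embedded closed curve in the simplicial complex $X$ it has at least three edges, and $f|_{\partial T}$ is a cellular homeomorphism of the circle $\partial T$ onto $\Gamma$, so $\partial T$ has at least three edges and in particular no loop. Hence $\inner(e)\subseteq\inner(T)$, and $\overline{e}=e\cup\{v\}$ is an embedded circle in the disc $T$ bounding a subcomplex disc $\overline{U}$ that meets $\partial T$ in at most $\{v\}$ and whose interior contains a $2$-cell. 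Since $f$ is cellular and the two endpoints of the $1$-simplex $e$ coincide, $f$ collapses $e$, so $f(\overline{e})=\{f(v)\}$. Put $V=\overline{T\setminus\overline{U}}$ and collapse $\overline{e}$ to a point: $V/\overline{e}$ is again a disc, $f|_V$ descends to it because $f$ is constant on $\overline{e}$, its boundary still maps homeomorphically onto $\Gamma$, and it has strictly fewer cells than $T$ (the interior of $\overline{U}$, which contains a $2$-cell, and the edge $e$ have been removed). This contradicts minimality.

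Now suppose $e\neq e'$ are distinct edges with the same endpoints $u\neq w$ (the case $u=w$ was handled above). Since $f$ is cellular into the simplicial complex $X$, both $f(e)$ and $f(e')$ are the unique edge of $X$ joining $f(u)$ and $f(w)$, or both degenerate; in either case $f|_e=f|_{e'}$ after matching parametrisations. Also $e,e'$ cannot both lie in $\partial T$ (that would make $\Gamma$ a $2$-gon). The bigon $\overline{e}\cup\overline{e'}$ bounds a subcomplex disc $\overline{U}$ whose interior contains a $2$-cell; let $V=\overline{T\setminus\overline{U}}$. If $e\subseteq\partial T$ (so $e'$ is an interior chord), then $V$ is already a disc in which $e'$ has taken the place of $e$ on the boundary, and since $f|_{e'}$ is an isomorphism onto the same edge $f(e)$ of $X$ the boundary still maps homeomorphically onto $\Gamma$. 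If both $e,e'$ are interior, then after additionally identifying $e$ with $e'$ (legitimate since $f|_e=f|_{e'}$) the quotient is a disc with $\partial T$ unchanged. Either way $f$ descends to the new complex, which has strictly fewer cells than $T$, contradicting minimality.

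The step I expect to be the main obstacle is the planar-topology bookkeeping hidden in the phrases ``is again a disc'' and ``maps homeomorphically onto $\Gamma$'': one must check, by a short case analysis according to whether $v$ (respectively $u,w$) lies on $\partial T$ or in its interior, that each surgered space is genuinely a disc and retains the correct boundary, and that the induced map stays cellular. A related point to be careful with is the precise meaning of ``cellular'' used throughout --- namely that a $1$-simplex of $T$ is sent isomorphically onto a $1$-simplex of $X$ or collapsed to a vertex --- which is what licenses the identities $f(\overline{e})=\{f(v)\}$ and $f(e)=f(e')$; this is automatic here because $f$ arises from simplicial approximation.
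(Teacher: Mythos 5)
Your overall strategy --- exploit the minimality of $(f,T)$ by performing a surgery that strictly decreases the number of cells --- is the same as the paper's. Your handling of double edges is essentially correct, and your explicit case split according to whether one of $e,e'$ lies on $\partial T$ is in fact slightly more careful than the paper, which phrases the surgery in terms of cutting out a generalized annulus and invoking \cref{gluing an annulus} and thereby tacitly assumes both edges are interior.

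The loop case, however, has a gap. The quotient $V/\overline{e}$ is indeed a disc topologically, and $f$ factors through it, but it does not inherit a $\Delta$-complex structure from $V$: any $2$-cell $t$ of $V$ having $e$ as one of its faces becomes, once $e$ is collapsed, a ``bigon'' whose characteristic map sends an entire face of the $2$-simplex to the single vertex $v$, which is not a legitimate $\Delta$-complex cell. Since $e$ is interior to $T$, there is at least one such $t$ on the $V$ side. Re-triangulating the offending bigon re-introduces cells and can destroy the strict decrease in cell count, so the operation as stated does not produce a competitor in the minimizing family. The paper's remedy is different: rather than collapsing $e$, it deletes the $2$-cell $t$ of $V$ incident to $e$ together with $e$ itself, and then folds the two remaining sides $e_1,e_2$ of $t$ onto each other (which is compatible with $f$ since $f(t)$ is degenerate). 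This is a good gluing of a generalized annulus in the sense of \cref{good gluing}, so \cref{gluing an annulus} yields a bona fide $\Delta$-complex disc with strictly fewer cells. To ensure $e_1,e_2$ are not themselves loops, so that the inner boundary is a simple cycle and the gluing is good, the paper takes $e$ to be an \emph{outermost} loop --- an extra hypothesis your argument omits but would also need once the surgery is repaired.
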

\begin{proof}
\textbf{$T^{(1)}$ contains no loops.}
As $X$ is a simplicial complex, if $e$ is a 1-cell in $T^{(1)}$ then $f(e)$ is either a point or a segment in $X^{(1)}$, so if $e$ is a loop in $T$ then $f(e)$ is a point. Such $e$ must be in the interior of $T$, as the boundary of $T$ is mapped injectively. 

Consider $e$ an outermost such loop in $T$. We have that $e$ separates $T$ into a generalized annulus and a disc, as in \cref{fig:loop}, and it is on the boundary of two 2-cells $t,t'$, where $t$ is in the generalized annulus. Consider the closure $A$ of the connected component of $\partial T$ in $T-t$. Let $e_1,e_2$ be the two 1-cells on the boundary of $t$ other than $e$ oriented such that $t(e_1)=o(e_2)$. Since $e$ is an outermost loop, $e_1,e_2$ are not loops, hence their concatenation forms a simple loop which is the inner boundary of $A$. 

As $f$ is cellular and not injective on $t$ we have $f(t)=f(\partial t)$, which is a 0-cell or 1-cell in $X$, so $f(e_1)=f(e_2)$. Let $\sim$ be the cellular equivalence relation defined by $e_1\sim e_2^{-1}$. The relation $\sim$ is a good gluing, as it identifies the two 1-cells on the inner boundary of $A$ in the correct orientation, hence by \cref{gluing an annulus}, $A/\sim$ is a disc.

The induced $\Delta$-complex structure on the disc $A/\sim$ has fewer cells than the number of cells in $T$ and the map induced from $f$ is still a filling map. This is a contradiction to the minimality of $(f,T)$.

\begin{figure}[H]
    \centering
    \includegraphics[]{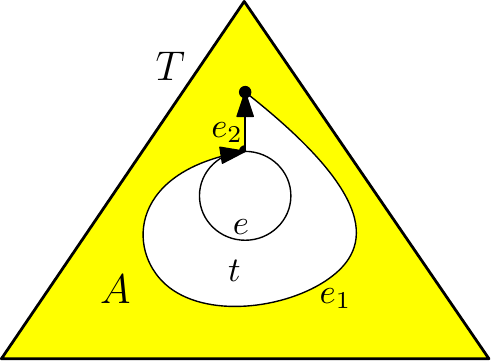}
    \caption{$e$ is a loop in $T$}
    \label{fig:loop}
\end{figure}

\textbf{$T^{(1)}$ contains no double 1-cells.}
Assume $e_1,e_2$ are oriented 1-cells with $o(e_1)=o(e_2),t(e_1)=t(e_2)$, then as $X$ is a simplicial complex $f(e_1)=f(e_2)$. 
The concatenation of $e_1$ and $e_2$ separates $T$ into a generalized annulus $A$ and a disc, as in \cref{fig:double edges}. The inner boundary of $A$ is the cycle $e_1,e_2 ^{-1}$. Let $\sim$ be the cellular equivalence relation $e_1\sim e_2$. As in the loop case, $\sim$ is a good gluing, as it identifies the two 1-cells on the inner boundary of $A$ in the correct orientation, hence by \cref{gluing an annulus}, $A/\sim$ is a disc.

The induced $\Delta$-complex structure on the disc $A/\sim$ has less cells than the number of cells in $T$ and the map induced from $f$ is still a filling map. This is a contradiction to the minimality of $(f,T)$.

\begin{figure}[H]
    \centering
    \includegraphics[]{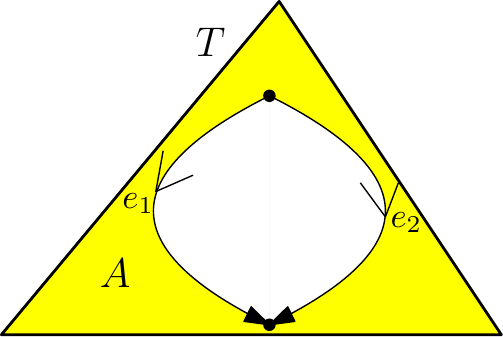}
    \caption{$e_1,e_2$ are double 1-cells}
    \label{fig:double edges}
\end{figure}

\end{proof}

If $f|_T$ is not a near-immersion then either there is a cell $t$ such that $f(t)=f(\partial t)$ or there are 2-cells $t,t'$ which share at least one 1-cell on their boundary and such that $f(t)=f(t')$. We will show both cases cannot happen.

\textbf{Case 1.} Assume $f(t)=f(\partial t)$. In this case, there exists a 1-cell, $e$, in $\partial t$ that is mapped to a point. As $f|_{\partial T}$ is injective, $e$ must be in the interior of $T$. Let $t'$ be the other 2-cell incident to $e$. Let $e,e_1,e_2$ and $e,d_1,d_2$ be the 1-cells in $\partial t$ and $\partial t'$ respectively, oriented as a cycle, as in \cref{fig:1-cell}. 
    
    Let $A$ be the closure of the connected component of $\partial T$ in $T-(\bar{t}\cup \bar{t'})$.
    The subcomplex $A$ is the closure of the complement of a closed disc in $T$, hence it is a generalized annulus. By the claim above, \cref{no loops}, the inner boundary of $A$ is a path consisting of four oriented 1-cells $e_1,e_2,d_2^{-1},d_1^{-1}$ in this order, see \cref{fig:1_cell}.
    
    We have that $f(e_1)$ is the inverse of $f(e_2)$ and $f(d_1)$ is the inverse of $f(d_2)$.
    Let $\sim$ be the cellular equivalence relation defined by $e_1\sim e_2^{-1},d_1\sim d_2^{-1}$. It is a good gluing, as $f$ is injective on the boundary of $T$, from strongly non-collinearity of $x,y,z$. From \cref{gluing an annulus}, $A/\sim$ is a disc which is a finite $\Delta$-complex. 
    The map induced from $f$, denoted $\bar{f}:A/\sim \to X$, is well defined, cellular and $\bar{f}|_{\partial A}:\partial A\to \triangle(x,y,z)$ is a homeomorphism. But $A/\sim$ has fewer cells than $T$, this is a contradiction to the minimality of $(f,T)$.

    \begin{figure}[H]
    \centering
    \includegraphics{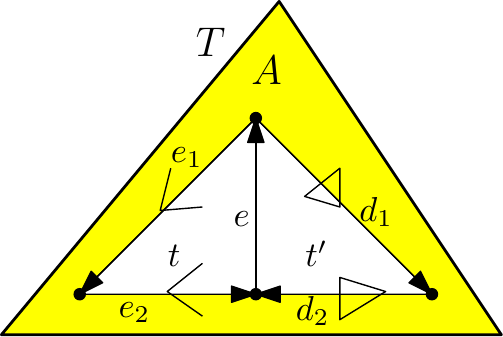}
    \caption{The case $f(t)=f(\partial t)$}
    \label{fig:1_cell}
    \end{figure}

\textbf{Case 2.} Assume $f(t)=f(t')$ for $t,t'$ sharing a 1-cell $e$. Let $e,e_1,e_2$ and $e,d_1,d_2$ be the 1-cells in $\partial t$ and $\partial t'$ respectively, oriented as a cycle, as in \cref{fig:twin_cells}.
    
    Let $A$ be the closure of the connected component of $\partial T$ in $T-(\bar{t}\cup \bar{t'})$. 
    As above, $A$ is a generalized annulus and the inner boundary of $A$ is a path consisting of four oriented 1-cells $e_1,e_2,d_2^{-1},d_1^{-1}$ in this order, as in \cref{fig:twin_cells}. Because $f(t)=f(t')$, we have that $f(e_1)$ is the inverse of $f(d_2)$ and $f(e_2)$ is the inverse of $f(d_1)$. 
    
    Let $\sim$ be the cellular equivalence relation defined by $e_1\sim d_2^{-1},d_1\sim e_2^{-1}$. It is a good gluing, as before, as $x,y,z$ are strongly non-collinear. By \cref{gluing an annulus}, we have that $A/\sim$ is homeomorphic to a disc. It is a finite $\Delta$-complex with fewer cells than $T$, and the induced map $\bar{f}:A/\sim \to X$ is still cellular. This contradicts the minimality of $(f,T)$.

    \begin{figure}[H]
    \centering
    \includegraphics{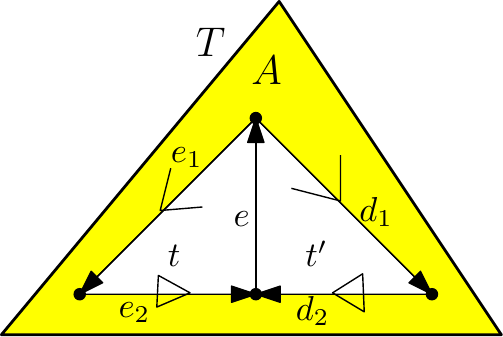}
    \caption{The case $f(t)=f(t')$}
    \label{fig:twin_cells}
    \end{figure}

Hence $f$ is a near-immersion.

By \cref{local link}, we get that $f$ is injective and this concludes the proof.

\end{proof}

\printbibliography


\end{document}